\numberwithin{equation}{section}
\newtheorem{thm}{Theorem}[section]
\newtheorem{lem}[thm]{Lemma}
\newtheorem{prop}[thm]{Proposition}
\theoremstyle{remark}
\newtheorem{remark}[thm]{Remark}
\theoremstyle{definition}
\newcommand{\bke}[1]{\left ( #1 \right )}
\newcommand{\bkt}[1]{\left [ #1 \right ]}
\newcommand{\bket}[1]{\left \{ #1 \right \}}
\newcommand{\norm}[1]{\left \| #1 \right \|}
\newcommand{\R}{\mathbb{R}}
\newcommand{\N}{\mathbb{N}}
\renewcommand{\div}{\mathop{\rm div}\nolimits}
\newcommand{\sgn}{\mathop{\rm sgn}\nolimits}
\newcommand{\pd}{\partial}
\newcommand\Ga{\Gamma}
\newcommand\ga{\gamma}
\newcommand{\si}{\sigma}
\newcommand\De{\Delta}
\newcommand\de{\delta}
\newcommand{\nb}{\nabla}
\newcommand{\lec}{{\ \lesssim \ }}
\newcommand{\gec}{{\ \gtrsim \ }}
\newcommand{\bka}[1]{{\langle #1 \rangle}}
\newcommand{\abs}[1]{\left | #1 \right |}
\newcommand{\mat}[1]{\begin{bmatrix} #1 \end{bmatrix}}
\newcommand\al{\alpha}
\newcommand\be{\beta}
\newcommand\e {\varepsilon}  %
\renewcommand\th{\theta}
\newcommand\Si{\Sigma}
\newcommand{\LN}  {\mathrm{L\mkern-0.5mu N}}
\newcommand{\esssup} {\mathop{\mathrm{ess\,sup}}}
\newcommand{\supp} {\mathop{\mathrm{supp}}}
\newcommand{\na}{\nabla}
\newcommand{\td}{\tilde}
\newcommand{\wt}{\widetilde}
\newcommand{\Eq}[1]{\begin{equation*} #1 \end{equation*}}
\newcommand{\EQ}[1]{\begin{equation} #1 \end{equation}}
\newcommand{\EQS}[1]{\begin{equation}\begin{split} #1 \end{split}\end{equation}}
\newcommand{\EQN}[1]{\begin{equation*}\begin{split} #1 \end{split}\end{equation*}}
\newcommand{\EN}[1]{\begin{enumerate} #1 \end{enumerate}}
\newcommand{\lot}{{\mathrm{l.o.t.}}}
\newcommand{\one}{\mathbbm{1}}
\begin{document}

\title
{Finite energy Navier-Stokes flows with unbounded gradients induced by localized flux in the half-space}

\author[1]{\rm Kyungkeun Kang}
\author[2]{\rm Baishun Lai}
\author[3]{\rm Chen-Chih Lai}
\author[4]{\rm Tai-Peng Tsai}
\affil[1]{\footnotesize Department of Mathematics, Yonsei University, Seoul 120-749, South Korea}
\affil[2]{\footnotesize LCSM (MOE) and School of Mathematics and Statistics, Hunan Normal University, Changsha 410081, Hunan, China}
\affil[3,4]{\footnotesize Department of Mathematics, University of British Columbia, Vancouver, BC V6T 1Z2, Canada}

\date{}
\maketitle

\begin{abstract}

For the Stokes system in the half space, Kang [Math.~Ann.~2005] showed that a solution generated by a compactly supported, H\"older continuous boundary flux may have unbounded normal derivatives near the boundary. In this paper we first prove explicit global pointwise estimates of the above solution, showing in particular that it has finite global energy and its derivatives blow up everywhere on the boundary away from the flux. We then use the above solution as a profile to construct solutions of the Navier-Stokes equations which also have finite global energy and unbounded normal derivatives due to the flux. Our main tool is the pointwise estimates of the Green tensor of the Stokes system proved by us in \cite{Green} (arXiv:2011.00134). %
We also examine the Stokes flows generated by dipole bumps boundary flux, and identify the regions where the normal derivatives of the solutions tend to positive or negative infinity near the boundary.

\medskip

\emph{Key words}: Stokes system, Navier-Stokes equations, half space, unbounded derivatives, boundary flux, dipole bumps, everywhere blow-up, Green tensor

\medskip

\emph{2010 Mathematics Subject Classifications}: 35Q30, 76D03, 76D05, 76D07
\end{abstract}

\renewcommand{\baselinestretch}{0.8}\normalsize
\tableofcontents
\renewcommand{\baselinestretch}{1.0}\normalsize

\section{Introduction}

Denote $x=(x_1,\ldots,x_{n-1},x_n) = (x',x_n)$ for $x \in \R^n$, $n \ge 2$,
 $\R^n_+ = \{  (x',x_n)\in \R^n \ | \ x_n>0\}$, and $\Si=\pd\R^n_+$.
The nonstationary Stokes system in the half-space $\R_{+}^{n}$, $n\geq2$, reads
\begin{align}\label{E1.1}
\begin{split}
\left.
\begin{aligned}
u_{t}-\Delta u+\nabla \pi=f \\
 \div u=0
\end{aligned}\ \right\}\ \ \mbox{in}\ \ \R_{+}^{n}\times (0,\infty),
\end{split}
\end{align}
with initial and boundary conditions
\begin{align}\label{E1.2}
u(\cdot,0)=u_0; \qquad  %
u(x',0,t)=\phi(x',t)\ \ \mbox{on}\ \ \Si\times (0,\infty).
\end{align}
Here  $u=(u_{1},\ldots,u_{n})$ is the velocity, $\pi$ is the pressure, and $f=(f_{1},\ldots,f_{n})$ is the  external force. They are defined for $(x,t)\in \R_{+}^{n}\times (0,\infty)$.

It was shown by the first author in \cite{Kang2005} an example of a H\"older continuous Stokes flow with \emph{unbounded normal derivatives} of the tangential components of the velocity in a region $E$ near the boundary.
It is caused by a nonzero boundary flux away from $E$.  The paper \cite{Kang2005} only considered the local behavior of the solution. Our first goal in this paper is to find explicit global pointwise bounds of the solution and its derivatives, and show that it has \emph{finite global energy}.  We will also show explicit lower bounds of  the normal derivative of its tangential components, showing their \emph{everywhere} blow up on the boundary away from the flux.
Our second goal is to use it as a profile to construct solutions of the Navier-Stokes equations
\EQ{\label{NS}
u_{t}-\Delta u+\nabla \pi= - u \cdot \nb u,\quad
 \div u=0,\quad
 \mbox{in}\ \ \R_{+}^{n}\times (0,\infty),
}
with similar properties.

Specifically,
the solution in \cite{Kang2005} is constructed from a H\"older continuous boundary data $\phi(x',t)$ of compact support using the Golovkin tensor $K_{ij}$, i.e., the Poisson kernel for  \eqref{E1.1} in the half space $\R^n_+$, see \eqref{Golovkin}.
The boundary data is of the form
\EQ{\label{boundary-data1}
\phi(\xi',s) = g(\xi') \,h(s)\, e_n,\quad e_n = (0,\ldots,0,1),
}
where
\EQ{\label{boundary-data2}
g \in C^3_c(B_1'), \quad
h\ge 0\in C_c(\R) \cap C^1(0,1),
}
and
\EQ{\label{boundary-data3}
 \supp h \subset [\tfrac14,1],\quad h(s) = (1-s)^a \text{ for }s\in [\tfrac12,1],  \quad 0<a\le\tfrac12.
}
It has only the normal component because of the unit normal vector $e_n$.
The exponent $a$ is equal to $\tfrac12$ in \cite{Kang2005}.
We will also consider $0<a<\tfrac12$ in this paper. Note that
$h(t)$ is H\"older continuous of exponent $a$, and its lack of further regularity in time is the cause for unbounded derivatives.
We do not assume as in \cite{Kang2005} that $g$ is radial and positive. They are unnecessary to show the \emph{upper bounds} in Proposition \ref{th3.1}.  For the \emph{lower bounds}, in fact, we will choose a nonradial $g$ in a product form in \eqref{gxiform} for more accurate estimates in Theorem \ref{th1.1} and Propositions \ref{th1.2}, and choose $g$ as a sum of two bumps of opposite signs in the Appendix A (\S\ref{Sec5}).

Denote by $\hat v(x,t)$ the solution of the Stokes system \eqref{E1.1} generated by the boundary data
\eqref{boundary-data1}--\eqref{boundary-data3} using the Golovkin tensor, with zero initial data and zero force; See \eqref{hatv.formula} for its formula.
It is showed in \cite{Kang2005} that, for $n=3$, $a=\tfrac12$ and radial $g(\xi')$,  for any $x_0' \in \Si$ with $|x_0'|>10$, we have that $\hat v\in C^{2b,b}(\overline{B_{x_0',r}}\times [0,2])$ for any $b$ in $(0,1/2)$, $0<r<1$, and at time $t=1$, for each $i=1,2$,
\EQ{\label{eq1.7}
\lim_{x_3 \to 0_+} |\pd_3 \hat v_i(x_{1},x_{2}, x_3,1) |= \infty, \quad \text{if}\quad x_{i}\not =0, \quad |(x_1,x_2)|>10.
}

A bounded but not H\"older continuous solution with unbounded normal derivative on $\Si$ at $t=1$
is also obtained in \cite[Remark 6]{Kang2005} by choosing $h(s) = \sum_{k=2}^\infty a_k(1-s)^{1/k}$ for $s\in [\frac12,1]$, with $a_k \ge 0$ and $\sum_{k=2}^\infty k^2 a_k<\infty$.
Its corresponding pressure at $t=1$ is in $L^q(B_{x_0',1}'\times (0,2))$ for $q=1$ but not for $q>1$. It shows that sufficient integrability of the pressure is necessary for the H\"older continuity of the velocity.

A more singular (very weak) solution of the Stokes system, also generated by boundary flux, was constructed in Chang and the first author \cite[Proposition 3.2]{C-K-20} for $n \ge 2$
so that $\norm{\nabla u}_{L^2_{t,x}(B_1^+ \times (0,1))}=\infty$,  although $\norm{u}_{L^4_tL^p_x(\R^n_+\times (0, 1))}$ is bounded for all $p>n/(n-1)$, with compactly supported boundary data belonging to $L^4_tL^{\infty}_x(\Si\times (0,1))$.
It is used as the profile to construct non-smooth solutions near the boundary of the Navier-Stokes equations in \cite[Section 4.2]{C-K-20}. This construction is possible because it only involves $L^q$-type estimates.

Seregin and \v Sver\'ak \cite{SeSv10}
have an example of a bounded shear flow that solves both Stokes and Navier-Stokes equations, has zero initial and boundary values, and has unbounded normal derivatives. However, this example has no spatial decay as it is a shear flow. Another viewpoint is that it has ``nonzero boundary value at spatial infinity''.
Hence it has \emph{infinite} global energy.
It seems very difficult to construct a solution with similar properties but with \emph{finite} global energy.

These examples are motivated by the regularity problem near the boundary for the Stokes system \eqref{E1.1} and the Navier-Stokes equations \eqref{NS}. They show that certain properties that are valid in the interior or globally may fail on the boundary. {In fact, in the interior case, one usually defines a solution of \eqref{NS} to be \emph{regular} at $(x_0,t_0)$ if it is bounded or H\"older continuous in $B_r(x_0)\times (t_0-r^2,t_0)$ for some $r>0$, since which imply the continuity of all higher spatial derivatives. This property fails in the boundary case, as shown by the examples of \cite{Kang2005,SeSv10}.}
For the theory of boundary regularity, see for example Seregin \cite{MR1810618, MR1891072, MR2749216,MR3509681}, Kang-Gustafson-Tsai \cite{GKT}, Mikhailov \cite{MR2749212,MR2749371} and Dong-Gu \cite{MR3255469} for regularity criteria,
Chernobay \cite{MR3904061} and Seregin \cite{MR4105900} for type I blowups near the boundary,
Chang-Jin \cite{MR3398794}  for H\"older continuity,
and Chang-Choe-Kang \cite{MR3815542} for Dini continuity. See also the survey paper \cite{MR3287793} of Seregin and Shilkin.

\medskip

The following is our first main result.

\begin{thm}
\label{th1.1}
Let $n \ge 2$ and $\phi(\xi',s)$ be defined on $\Si \times \R$ by
\eqref{boundary-data1}-\eqref{boundary-data3} with $0<a \le 1/2$.
Let $(\hat v(x,t),\hat p(x,t))$ be the solution of the Stokes system \eqref{E1.1} with boundary data $\phi$, zero initial data and zero force, {obtained using the Golovkin tensor in \eqref{hatv.formula} and \eqref{hatp.formula}.}
Then $\hat v$ satisfies the pointwise bounds in Proposition \ref{th3.1}
and is H\"older continuous, $\hat v\in C^{2b,b}(\overline{\R^n_+}\times [0,2])$ where $b=a$ if $0<a<1/2$, and $b$ can be any number in $(0,1/2)$ when $a=1/2$.
 In particular, when $n\ge 3$, it has finite global energy,
\EQ{\label{th1.1eq1}
\esssup _{0<t<2} \int_{\R^n_+} |\hat v(x,t)|^2\,dx + \int_0^2 \int_{\R^n_+} |\nb\hat v(x,t)|^2\,dx\,dt<\infty.
}

Moreover, for $n\ge2$ we have the following pointwise estimate for the pressure $\hat p$
\EQ{\label{est-p-hat}
|\hat p(x,t)|
\le C \frac{\chi_{\frac14\le t\le1}(t) }{(1-t)^{1-a}} \bke{\one_{n\ge3}\, \frac1{\bka{x}^{n-2}} + \one_{n=2} \log(|x|+2)} + C \frac{\chi_{t>1}(t)}{|t-1|^{1-a}\bka{x}^{n-1}}
}
for some constant $C>0$ independent of $x$ and $t$.

However, when $n \ge 3$, if we choose $g(\xi')$ in \eqref{boundary-data1} of the product form
\EQ{\label{gxiform}
g(\xi')=\textstyle \prod_{j=1}^{n-1} \mathcal{G}(\xi_j)
}
where $\mathcal{G}:\R\to\R$ is smooth, even, supported in $(-\frac 4{5\sqrt{n-1}},\frac4{5\sqrt{n-1}})$,  $\mathcal{G}(\zeta)=1$ for $|\zeta|<\frac 1{2\sqrt{n-1}}$, and $\mathcal{G}'(\zeta)\le 0$ for $\zeta >0$, then
for $x=(x_1,\ldots,x_n)\in\R^n_+$,
\[
|x'|\ge3,\quad
x_n\le 1,
\]
and if $i<n$ satisfies $|x_i| = \max_{1\le j \le n-1} |x_j|$,
we have
\EQ{\label{th1.1eq2}
| \pd_{x_n} \hat v_i(x,1) | \ge
\left\{\begin{array}{ll}
\displaystyle
\frac{C_1} {|x'| ^{n-1}}\log \frac 2{x_n} - \frac{C_2} {|x'| ^{n-2}}
& \text{ if } a=\tfrac12,\\[10pt] %
\displaystyle
\frac{ C_1 } {|x'| ^{n-1}\, x_n^{1-2a}}- \frac{C_2} {|x'| ^{n-2}}
& \text{ if } 0<a<\tfrac12,\end{array}\right.
}
where $C_1$ and $C_2$ are positive constants independent of $x$.
\end{thm}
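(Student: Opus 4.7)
\emph{Proof plan.}
My plan is to substitute the Golovkin representation
\begin{equation*}
\hat v_i(x,t) = \int_0^t h(s) \int_{\R^{n-1}} K_{in}\bigl(x-(\xi',0), t-s\bigr)\, g(\xi')\, d\xi'\, ds
\end{equation*}
into the sharp pointwise estimates on the Golovkin tensor $K_{ij}$ and its derivatives proved in \cite{Green}, and then harvest each conclusion in turn. For the upper bounds of Proposition~\ref{th3.1} I would split the $s$-integral at $s=\tfrac12$: on $[0,\tfrac12]$ the integrand is $C^1$ in $s$, while on $[\tfrac12,1]$ the weight $(1-s)^a$ is integrated against the near-diagonal singularity of $K_{in}$ as $t-s\to 0$; for $t>1$, $h\equiv 0$ on $[1,\infty)$ keeps the time argument of the kernel bounded away from zero and supplies extra spatial decay. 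The parabolic H\"older regularity $\hat v\in C^{2b,b}$ then follows from boundary Schauder estimates for \eqref{E1.1} with $h\in C^a_t$ and $g\in C^3_c$ boundary data, giving $b=a$ for $0<a<\tfrac12$ with the usual small loss at the borderline exponent $a=\tfrac12$.

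For the finite global energy \eqref{th1.1eq1} when $n\ge3$, I would integrate the pointwise bounds of Proposition~\ref{th3.1} in space and time: these bounds imply enough spatial decay of $\hat v$ and $\nabla\hat v$ at infinity for $L^2$ integrability on $\R^n_+$ precisely when $n\ge3$, and the time weight $(1-s)^a$ is square-integrable near $s=1$. For the pressure bound \eqref{est-p-hat} I would use the explicit formula \eqref{hatp.formula}: on $[\tfrac14,1]$ the pressure reduces, after $s$-integration against $h$, to a half-space Newtonian-type integral of $g$, producing the $\bka{x}^{-(n-2)}$ decay when $n\ge3$ and the $\log(|x|+2)$ decay when $n=2$; on $(1,\infty)$ the support condition $\supp h\subset[\tfrac14,1]$ puts the pressure kernel off its own singularity and supplies the extra $\bka{x}^{-(n-1)}$ decay shown in the second term of \eqref{est-p-hat}.

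The lower bound \eqref{th1.1eq2} is where the real work lies. Differentiating in $x_n$ and inspecting, inside the explicit form of $K_{in}$, the term responsible for the near-boundary blow-up, I would decompose
\begin{equation*}
\pd_{x_n}\hat v_i(x,1)=I_{\mathrm{sing}}(x)+R(x),\qquad |R(x)|\le C|x'|^{-(n-2)},
\end{equation*}
where $I_{\mathrm{sing}}$ is, up to a nonzero universal constant, of the schematic form
\begin{equation*}
A(x')\,\int_{0}^{1/2}\frac{\tau^{a}\,d\tau}{(\tau+x_n^2)^{3/2}},\qquad \tau=1-s,
\end{equation*}
with $A(x')$ the spatial amplitude obtained by integrating the smooth factors of the kernel against $g(\xi')$. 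Elementary estimates on this $\tau$-integral yield $\log(2/x_n)$ when $a=\tfrac12$ and $x_n^{-(1-2a)}$ when $0<a<\tfrac12$. The product structure \eqref{gxiform} is then used decisively to extract a definite sign and the prefactor $|x'|^{-(n-1)}$ from $A(x')$: the amplitude factors variable-by-variable across the bumps $\mathcal G(\xi_j)$, and the monotonicity $\mathcal G'\le 0$ on $(0,\infty)$ together with the hypothesis $|x_i|=\max_j|x_j|\gec|x'|/\sqrt{n-1}$ pins down the sign (opposite to that of $x_i$) and forces $|A(x')|\gec|x_i|/|x'|^{n}\gec|x'|^{-(n-1)}$. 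The principal obstacle is precisely this separation and sign bookkeeping: identifying the correct singular piece of $\pd_{x_n}K_{in}$, showing that the remainders are bounded by $C|x'|^{-(n-2)}$, and certifying coherent sign after integration against the tensor-product bump — all of which will require the explicit Golovkin tensor formulas of \cite{Green}, not merely their pointwise bounds.
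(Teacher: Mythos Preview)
Your outline for the upper bounds, H\"older continuity, finite energy, and pressure is broadly aligned with the paper, with one caveat: plugging the standard Golovkin bound $|K_{in}|\lesssim t^{-1/2}(|x|^2+t)^{-(n-1)/2}(x_n^2+t)^{-1/2}$ directly into the time integral produces a spurious $\log(1+\sqrt t/x_n)$ at the boundary (the paper computes this explicitly in \eqref{I-x>2}). The fix is the alternative formula \eqref{Kijeq2}, $\wt K_{in}=-4\pd_iC_n-4\pd_i\pd_nB$ for $i<n$: the $\pd_iC_n$ piece is integrated by parts onto $\phi$, and for the $B$-piece one uses $\pd_nB=-\tfrac{x_n}{2t}B$ to gain the factor of $x_n$ that kills the logarithm.

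The genuine gap is in the lower bound. Your schematic $I_{\mathrm{sing}}=A(x')\int_0^{1/2}\tau^a(\tau+x_n^2)^{-3/2}d\tau$ misses two essential features. First, the actual singular kernel $\pd_n^2B$ carries the sign-changing factor $\bigl(\tfrac12-\tfrac{x_n^2}{4s}\bigr)$, so the time integral is not single-signed. Second---and this is where your plan fails---the spatial amplitude does \emph{not} ``factor variable-by-variable across the bumps $\mathcal G(\xi_j)$'': the relevant spatial object is $K(x'-\xi',s)=\int_\Si e^{-|x'-\xi'-z'|^2/4s}|z'|^{2-n}dz'$, which is not a tensor product in the $\xi_j$. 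The paper's mechanism is different. One proves (Lemma~\ref{th3.2}) tight two-sided bounds $K(x',s)/\bigl(cs^{(n-1)/2}\bigr)\in\bigl[(\tfrac{m}{(m+1)d})^{n-2},(\tfrac{m}{(m-1)d})^{n-2}\bigr]$ up to an exponentially small remainder, and the \emph{closeness} of the upper and lower constants for large $m$ is what makes the argument run. One then splits $\pd_ig=(\pd_ig)_+-(\pd_ig)_-$ and applies the lower bound of $K$ to one sign and the upper bound to the other; the product form \eqref{gxiform} enters only through the reflection identity $\pd_ig(\xi'_*)=-\pd_ig(\xi')$ (see \eqref{eq3.47}), which reduces the sign question to comparing $d=|x'-\xi'|$ with $d_*=|x'-\xi'_*|$, and the hypothesis $|x_i|=\max_j|x_j|$ forces $d-d_*\ge\de>0$ uniformly on $\supp(\pd_ig)_+$. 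For $0<a<\tfrac12$ there is a further obstacle you have not anticipated: after this reduction one is left with $K_1M_1+K_2M_2$ where $K_2<0$, and positivity requires the strict inequality $M_2<M_1$ between the two halves of the time integral (equations \eqref{M1M2def}--\eqref{eq3.60}, proved via a Gamma-function integration by parts) together with a second, larger choice $m\sim |x'|M_1/(M_1-M_2)$.
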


\emph{Comments on Theorem \ref{th1.1}.}
\EN{
\item Theorem \ref{th1.1} improves \cite{Kang2005} in the sense that the upper and lower bounds are explicit, global, and with spatial decay. They imply finite global energy. Theorem \ref{th1.1} also treat the case $0<a<\frac12$. See Comment 2 on Proposition \ref{th1.2} for more details.

\item Eq.~\eqref{th1.1eq2} implies \emph{everywhere blow-up} of the normal derivative $\pd_{x_n} \hat v'$ of the tangential velocity  $\hat v'= (\hat v_1,\ldots, \hat v_{n-1})$ away from the flux,
\EQ{\label{th1.1eq2b}
\lim_{x_n \to 0_+} \max_{i<n} | \pd_{x_n} \hat v_i(x',x_n,1) | =\infty, \quad \forall x' \in \Si, \ |x'|\ge 3.
}

\item The lower bound \eqref{th1.1eq2} is restricted to $n \ge 3$ so that the key Lemma \ref{th3.2} is valid.

\item The Stokes system is a linear flow and allows superposition. Hence if we use boundary data $\td \phi(x',s) = \phi(x',s)  + \phi(x',s-2) $, the solution has unbounded derivatives at times $t=1$ and $t=3$. This in turn can be used to construct solutions of Navier-Stokes equations with unbounded derivatives at times $t=1$ and $t=3$. We do not state this as a formal result since this is much simpler than multiple singularity results whose singularities are due to nonlinearities such as nonlinear  Schr\"odinger equations \cite{MR1655515} and nonlinear heat equations \cite{MR2115464}.

\item The choice \eqref{gxiform} of boundary data $g(\xi')$ is supported in $|\xi'|\le1$ with a constant sign. We call this a \emph{single bump}. We will also consider \emph{dipole bumps} in the Appendix A (\S\ref{Sec5})
where two single bumps with opposite signs are combined.
}

{Proposition \ref{th1.2} below gives a sharpened version of \eqref{th1.1eq2} of Theorem \ref{th1.1}:}
The lower bound \eqref{th1.1eq2} is valid when $i<n$ satisfies $|x_i| = \max_{1\le j \le n-1} |x_j|$. In Proposition \ref{th1.2} we remove this condition and only require $x_i\not = 0$. Hence the constants are larger and also depend on $|x_i|$. Note that we also specify the sign of $\pd_{x_n} \hat v_i(x,1) $.

\begin{prop}\label{th1.2}
Let $n \ge 3$. For the same solution $\hat v$ in Theorem \ref{th1.1} with the same choice of $g(\xi')$ in \eqref{gxiform},
for every $i<n$, $\pd_n \hat v_i (x',x_n)$ blows up as $x_n \to 0_+$ for any $x' \in \Si$ with $x_i \not =0$ and $|x'|>3$, without assuming $|x_i| = \max_{1\le j \le n-1} |x_j|$. We have estimates of the form
\EQ{\label{th1.1eq3}
-\sgn(x_i) \, \pd_{x_n} \hat v_i(x,1)  \gec
\left\{\begin{array}{ll}
\displaystyle
\frac{|x_i|}{|x'|^{n}}\,  \log \frac 2{x_n} - \frac{C} {|x'|^{n-2 }}  \log \frac{4|x'|}{|x_i|}
& \text{ if } a=\tfrac12,\\[10pt]
\displaystyle
\frac{|x_i|}{|x'|^{n}}\,  \frac{ 1} { x_n^{1-2a}}- \frac{C}{|x_i|^{1-2a} |x'|^{n-3+2a}}
& \text{ if } 0<a<\tfrac12,\end{array}\right.
}
where C is a positive constant independent of $x$,
for all $|x'|>3$, $0<x_n<1$ and $x_i \not =0$.
We also have $\pd_{x_n} \hat v_i(x,1)=0$ if $x_i=0$.
\end{prop}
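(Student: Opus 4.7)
The plan is to start from the Golovkin-tensor representation
\EQ{\label{planrep}
\hat v_i(x,1) = \int_0^1 h(s) \int_{\Si} K_{in}(x-\xi',1-s)\, g(\xi')\, d\xi'\, ds, \quad i<n,
}
which underlies the proof of Theorem~\ref{th1.1}, and to sharpen the lower-bound argument of that theorem by tracking the dependence of $\pd_{x_n}\hat v_i(x,1)$ separately on $x_i$ and on $|x'|$, instead of bundling them together through the hypothesis $|x_i|=\max_j|x_j|$. The product structure \eqref{gxiform} of $g$ factors the tangential integral into one-dimensional integrals in each $\xi_j$, which is what isolates the special role played by the variable $\xi_i$.

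First I would identify where the extra factor $|x_i|/|x'|$ is gained. The singular-in-$x_n$ part of $\pd_{x_n} K_{in}(y,\tau)$ with $i<n$ is odd in the $i$-th component $y_i$; combined with the evenness of $\mathcal{G}(\xi_i)$ from \eqref{gxiform}, the change of variables $\xi_i\mapsto -\xi_i$ rewrites the $\xi_i$-integral as an integral of the symmetrized kernel. Expanding that symmetrized kernel in powers of $\xi_i$ around $\xi_i=0$ produces a leading term linear in $x_i$, of sign $-\sgn(x_i)$ and of spatial size $|x'|^{-n}$, rather than the unconditional $|x'|^{-(n-1)}$ appearing in Lemma~\ref{th3.2}. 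This is exactly the $|x_i|/|x'|^{n}$ prefactor that drives \eqref{th1.1eq3}.

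Next I would evaluate the time integral exactly as in the proof of Theorem~\ref{th1.1}: on the support of $h$ near $s=1$ one has $h(s)=(1-s)^a$, and the singular piece of $\pd_{x_n} K_{in}(y,1-s)$ behaves, up to smoother remainders, like a normal derivative of a heat kernel evaluated at $(|y'|,\sqrt{x_n^2+1-s})$. The rescaling $1-s=x_n^2\tau$ then reduces the relevant $s$-integral to one of the form
\EQ{\label{plantime}
\int_0^{C/x_n^2} \frac{\tau^{a}\, d\tau}{(1+\tau)^{\beta}} \asymp
\begin{cases} \log(2/x_n), & a=\tfrac12,\\[2pt] x_n^{2a-1}, & 0<a<\tfrac12,\end{cases}
}
for the appropriate $\beta>0$. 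Multiplying by the $|x_i|/|x'|^n$ factor from the previous step yields the first, divergent term of \eqref{th1.1eq3} in each regime. The remainder consists of the higher-order Taylor terms in $\xi_i$ and of the regular, non-singular pieces of $\pd_{x_n} K_{in}$; these are uniformly controlled in $x_n\in(0,1)$, and the pointwise kernel bounds that go into Proposition~\ref{th3.1} produce the advertised sub-leading corrections $|x'|^{-(n-2)}\log(4|x'|/|x_i|)$ when $a=\tfrac12$ and $|x_i|^{2a-1}|x'|^{-(n-3+2a)}$ when $0<a<\tfrac12$.

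The case $x_i=0$ is then immediate: for such $x'$ the integrand defining $\pd_{x_n}\hat v_i(x,1)$ is odd in $\xi_i$ by the structure of $K_{in}$ while $g$ is even in $\xi_i$ by \eqref{gxiform}, so the $\xi_i$-integral vanishes. The main obstacle is the antisymmetrization-plus-expansion step: without the hypothesis $|x_i|=\max_j|x_j|$ one can no longer invoke Lemma~\ref{th3.2} as a black box, so the expansion of $\pd_{x_n} K_{in}$ must be carried out while retaining sharp joint control of its $(x',x_n)$-dependence down to the scale $|\xi_i|\sim|x_i|$, which is precisely the threshold at which the first-order expansion degrades. The appearance of the logarithmic factor $\log(4|x'|/|x_i|)$ in the $a=\tfrac12$ remainder strongly suggests that a dyadic splitting of the $\xi_i$-integration across this scale is what is required to recover the stated lower bound.
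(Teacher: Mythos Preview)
Your proposal has the right skeleton---oddness in $x_i$, antisymmetrization in $\xi_i$, the same time-integral producing $\log(2/x_n)$ or $x_n^{2a-1}$, and the vanishing at $x_i=0$---but the mechanism you propose for extracting the $|x_i|/|x'|^n$ factor is genuinely different from what the paper does, and your sketch of it is underdeveloped.

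The paper does \emph{not} Taylor-expand the kernel in $\xi_i$, and no dyadic splitting of the $\xi_i$-integral appears anywhere. Instead the proof of Proposition~\ref{th1.2} is almost a verbatim rerun of Step~5 of the proof of Theorem~\ref{th1.1}: one still reduces to $\pd_n I_2$ via \eqref{eq3.6}, still applies Lemma~\ref{th3.2} with its free parameter $m$, and still arrives at the quantities $K_1$, $K_2$, $M_1$, $M_2$, $J_3$. The only change is the \emph{choice of $m$}. In Theorem~\ref{th1.1} one takes $m\sim |x'|$, using $|x_i|\sim |x'|$; here, without that hypothesis, the reflection identity $d-d_* = \tfrac{-4x_i\xi_i}{d+d_*}$ gives only $d-d_*\gtrsim \delta:=|x_i|/|x'|$, and one compensates by choosing $m\sim |x'|/\delta \sim |x'|^2/|x_i|$. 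With this $m$, the algebra of \eqref{eq3.54}--\eqref{eq3.55} yields $K_1\gtrsim |x_i|/|x'|^n$ directly, and the same $m$ plugged into the existing $J_3$ bound \eqref{J3.est} produces exactly the error terms $|x'|^{2-n}\log(4|x'|/|x_i|)$ and $|x_i|^{2a-1}|x'|^{3-n-2a}$ with no further work.

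So the paper's point is that Lemma~\ref{th3.2} already contains all the flexibility needed; the whole proposition is a one-parameter perturbation of the Theorem~\ref{th1.1} argument. Your Taylor-expansion route could in principle be pushed through, but you would be rebuilding from scratch what Lemma~\ref{th3.2} packages, and your description of how the remainder is controlled (``dyadic splitting across the scale $|\xi_i|\sim|x_i|$'') does not match the actual source of the error: it comes from the exponential tail $e^{-d^2/(8m^2 s)}$ in Lemma~\ref{th3.2}, evaluated at the larger $m$, not from any short-range behavior of the $\xi_i$-integral.
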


\emph{Comments on Proposition \ref{th1.2}.}
\EN{

\item {Extending \eqref{th1.1eq2b},} \eqref{th1.1eq3} implies everywhere blowup on the boundary away from the flux {of the normal derivative of \emph{each tangential component} of the velocity,}
in the sense that, for every $i<n$,
\EQ{
\limsup_{x \to (x'_0,0)} |\pd_{x_n} \hat v_i(x,1)| = \infty, \quad \forall x'_0 \in \Si, \ |x'_0|>3,
}
even if the $i$-th component of $x'_0$ is $0$. It is already known in \cite{Kang2005} although not explicitly stated. It follows from \eqref{eq1.7}.

\item Eq.~\eqref{th1.1eq2} of Theorem \ref{th1.1} and \eqref{th1.1eq3} of Proposition \ref{th1.2} improve \cite{Kang2005} because of their uniform estimates of the main term and the error term. In the corresponding estimates of \cite{Kang2005}, the coefficient of the leading singular term has implicit dependence on $x_0'$, and the error term is shown to be bounded but its decay is not shown.  Also, although \cite[Remark 5]{Kang2005} mentioned the case $0<a<1/2$, it does not provide any details. Indeed, in our proof, this case has two competing leading singular terms and is more delicate.
The choice of the product form \eqref{gxiform} of $g(\xi')$ instead of being radial makes these explicit bounds possible.

\item As $x_n \to 0_+$, $\pd_{x_n} \hat v_i(x,1)$ converges to $\infty$ in the region $x_i<0$, and to $-\infty$ in the region $x_i>0$. Each of these regions is connected. It is different from the dipole bumps case, to be considered in Proposition \ref{thA.1}.

}

Our next theorem asserts the existence of a solution $u(x,t)$ of the Navier-Stokes equations \eqref{NS} with properties similar to the Stokes flow considered in Theorem \ref{th1.1}.

\begin{thm}
\label{th1.3}
Let $n \ge 3$ and $\phi(\xi',s)$ be defined on $\Si \times \R$ by
\eqref{boundary-data1}-\eqref{boundary-data3} with $0<a \le 1/2$. There is a small constant $\al_0>0$ such that, for any $\al \in (-\al_0,\al_0)$,
there is a solution $(u,\pi)$ of the Navier-Stokes equations \eqref{NS} with boundary data $\al\phi$, zero initial data and zero force. The solution $u$ satisfies the pointwise bounds \eqref{eq-0415-a}, has finite global energy \eqref{th1.1eq1}, and is H\"older continuous, $u\in C^{2b,b}(\overline{\R^n_+}\times [0,2])$ where $b=a$ if $0<a<1/2$, and $b$ can be any number in $(0,1/2)$ when $a=1/2$.
Moreover, the pressure $\pi$ satisfies
\EQ{\label{pi-regular}
   \pi \in L^r(0,2; L^m(\R^n_+)), \quad   1<\forall r< \frac 1{1-a},  \quad \frac n{n-2}  <\forall m < \infty.
}

However, if we choose $g(\xi')$ in \eqref{boundary-data1} of the form \eqref{gxiform}, then
for $x=(x_1,\ldots,x_n)\in\R^n_+$,
\[
|x'|\ge3,\quad
x_n\le 1,
\]
and if $i<n$ satisfies $|x_i| = \max_{1\le j \le n-1} |x_j|$,
we have
\EQ{\label{th1.3eq1}
| \pd_{x_n} u_i(x,1) | \ge
\left\{\begin{array}{ll}
\displaystyle
\frac{C_1|\al|} {|x'| ^{n-1}}\log \frac 2{x_n} - \frac{C_2 |\al| } {|x'| ^{n-2}}
& \text{ if } a=\tfrac12,\\[10pt] %
\displaystyle
\frac{ C_1 |\al|} {|x'| ^{n-1}\, x_n^{1-2a}}- \frac{C_2|\al|} {|x'| ^{n-2}}
& \text{ if } 0<a<\tfrac12,\end{array}\right.
}
where $C_1$ and $C_2$ are positive constants independent of $x$.
\end{thm}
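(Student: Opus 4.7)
My plan is a perturbative construction around the Stokes profile $\al \hat v$ of Theorem \ref{th1.1}. Writing $u = \al \hat v + w$ and $\pi = \al \hat p + q$, the new unknown $w$ must have zero initial and boundary data and solve
\Eq{
w_t - \Delta w + \nb q = -\al^2 (\hat v\cdot \nb \hat v) - \al(\hat v\cdot \nb w + w\cdot \nb \hat v) - w\cdot \nb w, \quad \div w = 0,
}
on $\R^n_+\times(0,2)$. I would realize $w$ as the fixed point of the mild-solution map $\cN$ that convolves the right-hand side with the Green tensor of the Stokes system in the half space from \cite{Green}, whose pointwise estimates drive every step.

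The working Banach space $X$ will be equipped with two pointwise weights: one controls $|w(x,t)|$ by roughly the same decay rate enjoyed by $\hat v$ from Proposition \ref{th3.1}, and a second controls $|\nb w(x,t)|$ by a weight that is \emph{uniformly bounded in $x_n$ up to $\Si$} in the region $\{|x'|>2\}$, unlike $\nb\hat v$ which blows up there. The crucial point is that the blow-up of $\nb\hat v$ near the boundary is only logarithmic (when $a=1/2$) or $x_n^{2a-1}$ (when $a<1/2$), hence space--time integrable, so the source $\hat v\cdot\nb\hat v$ is regular enough that Green-tensor smoothing produces an output with bounded gradient up to the boundary away from $\supp g$. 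For $|\al|<\al_0$ small the bilinear and trilinear structure ($\al^2$ from the source, $\al$ from the cross terms, quadratic in $w$) renders $\cN$ a contraction on the closed ball of radius $\sim |\al|^2$ in $X$, giving a unique $w$ with $\|\nb w\|_{L^\infty}\lesssim \al^2$ in that region.

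With $u = \al\hat v + w$ in hand, the pointwise bound \eqref{eq-0415-a}, finite global energy \eqref{th1.1eq1}, and the H\"older continuity $u\in C^{2b,b}(\overline{\R^n_+}\times[0,2])$ follow by combining Theorem \ref{th1.1} for $\al\hat v$ with the better estimates for $w$; H\"older regularity of $w$ is obtained by linear Stokes theory applied to its forced equation, whose source lies in a subcritical class. The pressure bound \eqref{pi-regular} decomposes: $\al\hat p$ obeys \eqref{est-p-hat} (and the exponent range stated in \eqref{pi-regular} is exactly where that bound is integrable) while $q$ is more regular. For the lower bound \eqref{th1.3eq1}, I split
\Eq{
\pd_{x_n} u_i(x,1) = \al\, \pd_{x_n}\hat v_i(x,1) + \pd_{x_n} w_i(x,1),
}
apply \eqref{th1.1eq2} to the first term, and bound $|\pd_{x_n} w_i(x,1)|\le C\al^2/\bka{x'}^{n-1}$ uniformly in $x_n\in(0,1]$ by the $X$-norm; shrinking $\al_0$ if necessary absorbs this $O(\al^2)$ contribution into the $C_2|\al|/|x'|^{n-2}$ error of \eqref{th1.3eq1}.

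The main obstacle is the design of the gradient weight in $X$: one must certify that $\nb w$ stays bounded up to $\Si$ despite $\hat v\cdot\nb\hat v$ being singular there. This requires a careful splitting of the Green tensor into its local (heat-kernel-like) part and its nonlocal (boundary-pressure) correction from \cite{Green}, verifying that the nonlocal piece does not regenerate a $\log(2/x_n)$ or $x_n^{2a-1}$ singularity in $\nb w$ from the singular source. Once this pointwise gradient bound is secured, the remainder -- contraction, energy estimate, and H\"older regularity -- is routine.
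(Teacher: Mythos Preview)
Your proposal is correct and follows essentially the same route as the paper: decompose $u=\al\hat v+v$, construct $v$ by contraction in a weighted space using the Green-tensor pointwise bounds of \cite{Green}, and obtain the lower bound \eqref{th1.3eq1} by subtracting the $O(\al^2)$ gradient of $v$ from $\al$ times \eqref{th1.1eq2}. One simplification relative to your plan: the paper does not split the Green tensor into local and boundary-correction pieces; it uses the single combined bound $|\nabla_{x,y}G_{ij}(x,y,s)|\lesssim (|x-y|+\sqrt s)^{-n}s^{-1/2}$ and handles the singular factor in $\hat v\cdot\nabla\hat v$ by the time estimate $\LN(y,s)/(y_n+1)^{2a}\lesssim |s-1|^{-\beta}$ for some $\beta<1/2$, which is integrable in $s$; this yields directly $|\nabla v|\lesssim \al^2\bka{x}^{-n}$ (bounded up to $\Si$) without any further decomposition.
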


\emph{Comments on Theorem \ref{th1.3}.}
\EN{
\item Theorem \ref{th1.3} gives solutions with pointwise upper bounds for themselves and their derivatives, and pointwise lower bounds for their normal derivatives at time $t=1$. Moreover, these solutions have finite global energy and are H\"older continuous up to boundary.

\item To obtain pointwise bounds for the contribution from the nonlinear term, we use the pointwise estimates of the Green tensor of the Stokes system in the half space, proved in \cite{Green}.

\item The condition $n\ge3$ is used in two places in the proof of Theorem \ref{th1.3}: First, it is needed for the lower bound \eqref{th1.1eq2} of Theorem \ref{th1.1} for the profile $\hat v$. Second,  the pointwise estimates of the Green tensor
are simpler when $n \ge 3$; Compare \thref{thm3} with \cite[Theorem 1.5]{Green}.

\item In most known boundary regularity criteria {(e.g.~\cite{MR1810618, MR1891072,MR2749216,MR3509681,GKT,MR2749212,MR2749371,MR3255469})} for Navier-Stokes equations,
certain integrability conditions on the pressure
are needed
to at least exclude the parasitic solutions of Serrin. {See \cite{MR1151263, MR2030374,MR2250016} for criteria involving only the pressure.}
Our pressure estimate \eqref{pi-regular} suggests that any weaker condition on the pressure is probably insufficient {for the continuity of velocity derivatives.}
}

As mentioned at the beginning, we will use the solution $\hat v$ of Theorem \ref{th1.1} as the profile, and construct the solution $u$ in Theorem \ref{th1.3} by Picard iteration. The key to the construction is the pointwise bound of the (unrestricted) Green tensor for \eqref{E1.1} proved by us
in \cite{Green}.  After we have pointwise {\emph{upper and lower}} bounds for the linear solution $\hat v$, we use the pointwise bound of the Green tensor to {get an \emph{upper bound} for the difference $v=u - \al \hat v$ generated by the nonlinearity $u\cdot \nb u $.} Pointwise bounded solutions of the Navier-Stokes equations in the half space have been constructed by various authors, see the review of \cite{Green} after \cite[(1.24)]{Green}. However, their constructions rely on explicit expansion of the nonlinearity with Helmholtz projection applied, which seem complicated to get localized estimates. Theorem \ref{th1.3} is in fact a major motivation for the paper \cite{Green}.

In Appendix A (\S\ref{Sec5}) we study the boundary blow up of the Stokes flows generated by dipole bumps boundary flux of the form \eqref{boundary-data1}--\eqref{boundary-data3} in $\R^3_+$ with
\EQ{\label{dipole-form}
g(\xi_1,\xi_2) = \mathcal{G}(\xi_1+10)\, \mathcal{G}(\xi_2) - \mathcal{G}(\xi_1-10)\, \mathcal{G}(\xi_2).
}
Our results are summarized in Proposition \ref{thA.1}.
Unlike \eqref{gxiform} where $g\ge0$, \eqref{dipole-form} consists of two bumps with opposite signs.
Thus, it gives an in-flux for $\xi_1 \sim -10$, and an out-flux for $\xi_1 \sim 10$. We will identify the regions where the normal derivatives of the solutions tend to positive or negative infinity near the boundary. These regions are disconnected and appear to be bordered by curves with asymptotic lines.

The rest of this paper is organized as follows.
In Section \ref{Sec2} we recall some preliminary results including integral estimates, the Golovkin tensor, the Green tensor, and a few other functions.
In Section \ref{Sec3} we prove pointwise estimates for the Stokes flow of \cite{Kang2005} and its derivatives. We also prove lower bounds of its derivatives that implies blow up. These are summarized in Theorem \ref{th1.1} and Proposition \ref{th1.2}.
In Section \ref{Sec4} we use the previous solution as the profile to construct solutions of the Navier-Stokes equations with the same pointwise estimates and upper and lower bounds of derivatives. It gives Theorem \ref{th1.3}.
In Appendix A (\S\ref{Sec5}) we study the boundary blow up of the Stokes flows generated by dipole bumps boundary flux.
In Appendix B (\S\ref{Sec6}) we prove derivative formulas and estimates of the function $C_i(x,y,t)$ to be defined in \eqref{eq_def_Ci} and its derivatives.

\section{Preliminaries}\label{Sec2}

In this section we recall some preliminary results.
We first recall a few useful integral estimates. We then recall the Golovkin tensor and the Green tensor, and a few other functions useful for their study.

\begin{lem}\label{lemma2.1}
For positive $L,a,d$, and $k$ we have
\[\int_0^L\frac{r^{d-1}\,dr}{(r+a)^k}\lesssim\left\{\begin{array}{ll}L^d(a+L)^{-k}& \text{ if } k<d,\\[3pt] L^d(a+L)^{-d}(1+\log_+\frac{L}a) & \text{ if } k=d,\\[3pt] %
L^d(a+L)^{-d}a^{-(k-d)}
& \text{ if } k>d.\end{array}\right.\]
\end{lem}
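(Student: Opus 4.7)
This is a purely one-variable integral estimate, and the cleanest route is to nondimensionalize. I would substitute $s = r/a$ to obtain
\[
\int_0^L \frac{r^{d-1}}{(r+a)^k}\,dr \; = \; a^{d-k}\int_0^{M}\frac{s^{d-1}}{(1+s)^k}\,ds, \qquad M := L/a,
\]
so it suffices to estimate $I(M) := \int_0^M s^{d-1}(1+s)^{-k}\,ds$ as a function of the single parameter $M > 0$ and then multiply by $a^{d-k}$.

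To bound $I(M)$ I would split the region of integration at $s = 1$, using the elementary comparisons $1 + s \asymp 1$ on $[0,1]$ and $1 + s \asymp s$ on $[1,\infty)$. The contribution from $[0,\min(M,1)]$ is $\lesssim \min(M,1)^d$, which handles the whole integral when $M \le 1$ (giving $I(M) \lesssim M^d$, and note $(1+M)^{-k} \asymp 1$ in this range, so we may freely insert that factor). When $M > 1$ I would additionally estimate $\int_1^M s^{d-1-k}\,ds$, which evaluates to a multiple of $M^{d-k}$ if $k < d$, to $\log M$ if $k = d$, and to an $O(1)$ quantity if $k > d$ (coming from the antiderivative at $s = 1$). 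Combining the two regions in each case and using $(1+M)^{-k} \asymp M^{-k}$ when $M > 1$ yields
\[
I(M) \;\lesssim\; \left\{\begin{array}{ll} M^d(1+M)^{-k} & \text{if } k < d,\\[2pt] M^d(1+M)^{-d}\bigl(1+\log_+ M\bigr) & \text{if } k = d,\\[2pt] M^d(1+M)^{-d} & \text{if } k > d.\end{array}\right.
\]

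Finally I would multiply back by $a^{d-k}$ and use the identity $a(1 + L/a) = a + L$, which converts each of the three bounds into exactly the one stated in the lemma (noting that $a^{d-k} = a^{-(k-d)}$ appears automatically in the $k > d$ case, while in the $k \le d$ cases the prefactor $a^{d-k}$ cancels against the corresponding powers in $M = L/a$ and $1 + M$).

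There is no real obstacle; the only mildly delicate point is verifying uniformity in the logarithmic case, i.e.\ checking that the bound $L^d(a+L)^{-d}(1+\log_+(L/a))$ dominates both the $M \le 1$ sub-case (where $1 + \log_+ M = 1$) and the $M > 1$ sub-case (where the $\log M$ term is the dominant contribution) without case distinction in the final statement. Since $1 + \log_+ M \ge 1$ universally, this is immediate.
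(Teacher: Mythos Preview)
Your proof is correct and complete; the rescaling $s=r/a$ reduces everything to a single-parameter integral, and the split at $s=1$ together with the comparisons $1+s\asymp 1$ on $[0,1]$ and $1+s\asymp s$ on $[1,\infty)$ handles all three cases cleanly. The paper itself does not prove this lemma but simply cites it from \cite{Green}, so there is nothing further to compare.
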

This is \cite[Lemma 2.1]{Green}.

\begin{lem}\label{lemma2.2}
Let $a>0$, $b>0$, $k>0$, $m>0$ and $k+m>d$. Let $0\not=x\in \R^d$ and
\[I:=\int_{\R^d}\frac{dz}{(|z|+a)^k(|z-x|+b)^m}.\]
Then, with $R=\max\{|x|,\,a,\,b\}\sim|x|+a+b$,
\EQN{I\lesssim R^{d-k-m} + \de_{kd} R^{-m} \log \frac Ra
+ \de_{md} R^{-k} \log \frac Rb
+ \mathbbm 1_{k>d} R^{-m}a^{d-k}
+ \mathbbm 1_{m>d} R^{-k}b^{d-m}.
}
\end{lem}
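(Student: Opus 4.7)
The plan is to decompose $\R^d$ by proximity to the two singular points $0$ and $x$, apply Lemma \ref{lemma2.1} on each piece, and then translate the resulting bounds into the claimed form via $R\sim|x|+a+b$. Concretely, set
\[
A=\{z:|z|\le|x|/2\},\quad B=\{z:|z-x|\le|x|/2\},\quad C=\R^d\setminus(A\cup B),
\]
which are disjoint since $|z|\le|x|/2$ forces $|z-x|\ge|x|/2$, and write $I=I_A+I_B+I_C$.

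In region $A$, $|z-x|+b\ge\max(|x|/2,b)\sim|x|+b$, so pulling the second factor out gives
\[
I_A \lesssim (|x|+b)^{-m}\int_0^{|x|/2}\frac{r^{d-1}\,dr}{(r+a)^k},
\]
and Lemma \ref{lemma2.1} with $L=|x|/2$ produces a trichotomy according to whether $k<d$, $k=d$, or $k>d$. Region $B$ is handled identically via the substitution $w=z-x$, which swaps the roles of $(k,a)$ and $(m,b)$. For region $C$ both $|z|\ge|x|/2$ and $|z-x|\ge|x|/2$; I further split $C=C_1\cup C_2$ with $C_1=C\cap\{|z|\le 2|x|\}$ (where the integrand is $\lesssim(|x|+a)^{-k}(|x|+b)^{-m}$ on a set of volume $\lesssim|x|^d$) and $C_2=\{|z|>2|x|\}$. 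On $C_2$ we have $|z-x|\sim|z|$, and polar coordinates reduce $I_{C_2}$ to
\[
\int_{2|x|}^\infty \frac{r^{d-1}\,dr}{(r+a)^k(r+b)^m},
\]
which I estimate by partitioning the $r$-axis at $\min(a,b)$ and $\max(a,b)$ and on each piece replacing $(r+a)$ by $\max(r,a)$ and $(r+b)$ by $\max(r,b)$.

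The main obstacle is the reassembly: expressing each regional bound---which naturally involves $|x|$, $a$, $b$ individually---purely in terms of $R$, and verifying that no spurious term survives beyond the five displayed in the statement. Two observations drive the cleanup. First, $(|x|+a)+(|x|+b)\ge|x|+a+b\sim R$ gives $\max\{|x|+a,|x|+b\}\gtrsim R$, which in the generic sub-cases $k<d,\, m<d$ recovers the main term $R^{d-k-m}$. Second, when $k>d$ the range $r\in(2|x|,\min(a,b))$ of the $C_2$-integral, together with the analogous part of $I_A$, contributes the correction $\one_{k>d}R^{-m}a^{d-k}$; the borderline $k=d$ case of Lemma \ref{lemma2.1} is precisely where the logarithm $\de_{kd}R^{-m}\log(R/a)$ arises; and the symmetric terms involving $m$ and $b$ come from the analogous analysis of $B$ and of the range $r\in(\max(a,b),\infty)$ of $C_2$. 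The expected delicate sub-cases are those in which $a$ (or $b$) dominates $R$ while $|x|+b$ (resp.\ $|x|+a$) is much smaller: there one must use the sharper bound $|x|^d\le\min(a,b)^d$ inside $A$ (resp.\ $B$) to avoid overcounting. Summing $I_A+I_B+I_{C_1}+I_{C_2}$ with these identifications yields the claimed inequality; the hypothesis $k+m>d$ is used only to guarantee convergence of the outer $r$-integral over $C_2$.
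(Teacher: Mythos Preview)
The paper does not prove this lemma; it simply cites it as \cite[Lemma 2.2]{Green}. Your decomposition into a ball near $0$, a ball near $x$, an annulus of scale $|x|$, and the exterior $\{|z|>2|x|\}$, followed by Lemma~\ref{lemma2.1} on the first two pieces and a direct radial integral on the last, is the standard route and is correct. You have also correctly isolated the only place the hypothesis $k+m>d$ enters (convergence of the $C_2$-integral at infinity) and correctly located the source of the logarithmic terms (the borderline cases $k=d$, $m=d$ of Lemma~\ref{lemma2.1} and of the middle $r$-range in $C_2$).

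One small point in the reassembly deserves tightening. In the delicate sub-case you flag---say $R\sim a$, $|x|+b\ll a$, and $m>d$---the bound on $I_A$ reduces to controlling $|x|^d(|x|+b)^{-m}$. Your remedy ``$|x|^d\le\min(a,b)^d$'' only applies when $|x|\le b$; when $b<|x|$ you instead use $|x|^d(|x|+b)^{-m}\le|x|^{d-m}\le b^{d-m}$. Both cases give $|x|^d(|x|+b)^{-m}\lesssim b^{d-m}$, which is exactly what feeds the $\one_{m>d}R^{-k}b^{d-m}$ term. (Equivalently, one can note that $t\mapsto t^d(t+b)^{-m}$ is maximized at $t\sim b$ with value $\sim b^{d-m}$.) With this adjustment your sketch goes through in every sub-case.
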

This is \cite[Lemma 2.2]{Green}.

\begin{lem}
\label{th2.3}
For $m\ge 1$ and $0<b<10$,
\begin{align}
\label{0223a}
\int_0^{1} \frac {-\log u}{(b+u)^m} du &\lec  1+  b^{1-m} |\log b|
+\de_{m1} (\log b) ^2.
\end{align}
\end{lem}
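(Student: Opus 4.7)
The plan is to split the interval $[0,1]$ at $u=b$ and estimate each piece by elementary means, distinguishing the cases $b\ge 1$ and $b<1$. When $1\le b<10$, we have $(b+u)^m\gtrsim 1$ uniformly on $[0,1]$, so the integral is dominated by $\int_0^1(-\log u)\,du=1$; this accounts for the constant term on the right-hand side, and the other two terms on the RHS are $O(1)$ in this regime, so there is nothing further to check.

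For $0<b<1$, I would write $\int_0^1=I_1+I_2$ with $I_1=\int_0^b\frac{-\log u}{(b+u)^m}\,du$ and $I_2=\int_b^1\frac{-\log u}{(b+u)^m}\,du$. On $I_1$ I use the crude pointwise bound $(b+u)^m\ge b^m$ together with the explicit antiderivative $\int_0^b(-\log u)\,du = b(1-\log b) = b(1+|\log b|)$ to obtain
\[
I_1 \le b^{-m}\cdot b(1+|\log b|) = b^{1-m}+b^{1-m}|\log b| \lec 1 + b^{1-m}|\log b|,
\]
where the last step uses $b^{1-m}\le 1$ when $b$ is near $1$ (since $m\ge 1$) and $b^{1-m}\lec b^{1-m}|\log b|$ when $b$ is small.

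For $I_2$ I use $(b+u)^m\ge u^m$ together with the monotonicity bound $-\log u\le -\log b=|\log b|$ valid for $u\in[b,1]$, which yields $I_2\le |\log b|\int_b^1 u^{-m}\,du$. The remaining integral evaluates to $(b^{1-m}-1)/(m-1)\lec b^{1-m}$ when $m>1$, producing $I_2\lec b^{1-m}|\log b|$; when $m=1$ it equals $|\log b|$, producing $I_2\lec (\log b)^2$. Combining with the estimate for $I_1$ yields exactly the claimed bound, the Kronecker factor $\de_{m1}$ recording the resonance that appears only at $m=1$.

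I do not expect any real obstacle here: this is a routine one-dimensional estimate. The only care needed is to isolate the extra logarithmic divergence in the $m=1$ case, which is precisely what the $\de_{m1}(\log b)^2$ term in \eqref{0223a} encodes.
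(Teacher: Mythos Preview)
Your proof is correct and follows essentially the same approach as the paper: split at $u=b$, bound $(b+u)^{-m}$ by $b^{-m}$ on $[0,b]$ and by $u^{-m}$ on $[b,1]$, and treat the borderline case $m=1$ separately. The only cosmetic differences are that the paper cuts off the ``large $b$'' case at $b=1/2$ rather than $b=1$, and for $m=1$ it handles the whole integral at once via the antiderivative $\int\frac{\log u}{u}\,du=\tfrac12(\log u)^2$ instead of splitting; your split version works just as well.
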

\begin{proof}
It is clear if $b\ge 1/2$ and
we may assume $0<b<1/2$.
If  $m>1$,
\[
\int_0^{1} \frac {-\log u}{(b+u)^m} du
= \int_0^b + \int_b^{1}
\le b^{-m} \int_0^b (-\log u) du+ \int_b^{1}  \frac {-\log u}{u^m} du
\lec b^{1-m} |\log b|.
\]
If $m=1$,
as $\int \frac{\log u}u du = \tfrac12 (\log u)^2$,
\[
\int_0^{1} \frac {-\log u}{b+u} du
\lec   (\log b) ^2.
\]
Combined, we get \eqref{0223a}.
\end{proof}

We next recall a few functions.
The \emph{heat kernel} $\Ga$ and the \emph{fundamental solution} $E$ of $-\De$ are given by
$$
\Gamma(x,t)=\left\{\begin{array}{ll}(4\pi t)^{-\frac{n}{2}}e^{\frac{-x^{2}}{4t}}&\ \text{ for }t>0,\\[3pt] 0&\ \text{ for }t\le0,\end{array}\right.\ \text{ and }\ E(x)=\left\{\begin{array}{ll}\frac1{n\,(n-2)|B_1|}\,\frac1{|x|^{n-2}}&\ \text{ for }n\ge3,\\[3pt] -\frac1{2\pi}\,\log|x|&\ \text{ if }n=2.\end{array}\right.
$$

The \emph{Golovkin tensor} $K_{ij}(x,t): \R_{+}^{n}\times \R\to \R$  is the Poisson kernel of the nonstationary Stokes system \eqref{E1.1} in the half-space $\R^n_+$, first constructed by Golovkin
\cite{MR0134083} for $\R^3_+$. A solution {$(\hat v(x,t),\hat p(x,t))$} of \eqref{E1.1} with zero force, zero initial value and boundary value
\begin{equation}\label{0903a}
\hat{v}(x',0,t)=\phi(x',t), \ \ \mbox{on}\ \ \Si\times (0,\infty),
\end{equation}
is given by {(see \cite[(84)-(85)]{MR0171094} and \cite[(2.17)-(2.19)]{Green})}
\EQ{\label{Golovkin}
\hat{v}_i(x,t)=\sum_{j=1}^n\int_{-\infty}^{\infty}\int_\Si K_{ij}(x-\xi',t-s)\phi_j(\xi',s)\,d\xi'\,ds,
}
{
\EQS{\label{eq-hat-p-def}
\hat p(x,t)=&~2\sum_{i=1}^n\pd_i\pd_n\int_{\Si}E(x-\xi')\phi_i(\xi',t)\,d\xi'
+2\int_{\Si}E(x-\xi')\pd_t\phi_n(\xi',t)\,d\xi'\\
&-4\sum_{i=1}^n(\pd_t-\De_{x'})\int_{-\infty}^\infty\int_{\Si}\pd_iA(x-\xi',t-\tau)\phi_i(\xi',\tau)\,d\xi'd\tau,
}
}
where
we extend $\phi(x',t)=0$ for $t<0$ and
the Golovkin tensor $K_{ij}(x,t)$ is
\EQS{\label{eq_def_Kij}
K_{ij}(x,t)&=-2\,\de_{ij}\,\pd_n\Ga(x,t)-4\,\pd_j\int_0^{x_n}\int_\Si\pd_n\Ga(z,t)\,\pd_iE(x-z)\,dz'\,dz_n\\
&\quad -2\,\de_{nj}\pd_iE(x)\de(t).
}
For $n \ge 2$, the Golovkin tensor satisfies,
for $i,j=1,\ldots,n$ and $t>0$,
\begin{equation}\label{eq_estKij}
\left|\pd_{x'}^l\pd_{x_n}^k\pd_t^mK_{ij}(x,t)\right|\lesssim\frac1{t^{m+\frac12}\left(x^2+t\right)^{\frac{l+n-\si}2}(x_n^2+t)^{\frac{k+\si}2}}, \quad \si=\de_{i<n} \de_{jn}.
\end{equation}
Here $\si=1$ if $i<n=j$ and $\si=0$ otherwise. See \cite{MR0171094} and \cite[(2.20)]{Green}.

The \emph{Green tensor} $ G_{ij}(x,y,t)$ of the Stokes system \eqref{E1.1}
is defined for $(x,y,t) \in\R^n _+ \times \R^n_+ \times \R$ and
 $1\le i,j\le n$ so that,
for suitable $f$ and $u_0$ and zero boundary value $\phi=0$,
the solution of \eqref{E1.1} is given by
\begin{equation}
\label{E1.3}
u_i(x,t) = \sum_{j=1}^n\int_{\R^n_+}G_{ij}(x,y,t)(u_0)_j(y)\,dy+
\sum_{j=1}^n\int_0^t \int_{\R^n_+} G_{ij}(x,y,t-s) f_j(y,s)\,dy\,ds.
\end{equation}

The following theorem is \cite[Theorem 1.5]{Green} restricted to dimension $n \ge 3$.

\begin{thm}[Green tensor estimates]\thlabel{thm3}
Let $n\ge 3$, $x,y\in\R^n_+$, $t>0$, $i,j=1,\ldots,n$, and $l,k,q,m \in \N_0$.
We have
\EQS{\label{eq_Green_estimate}
|\pd_{x',y'}^l \pd_{x_n}^k \pd_{y_n}^q \pd_t^m G_{ij}(x,y,t)|\lec&\frac1{(|x-y|^2+t)^{\frac{l+k+q+n}2+m}}
\\&+\frac{1}
{t^{m}(|x^*-y|^2+t)^{\frac{l+n+\si_{ijkq}}2}((x_n+y_n)^2+t)^{\frac{k+q-\si_{ijkq}}2} },
}
where $
\si_{ijkq} = (\de_{in} + \de_{jn})(1-\de_{k0}\de_{q0}) - \de_{in} \de_{jn} \de_{k+q=1}\in \{0,1,2\}$, and
$k+q-\si_{ijkq}=(k+q-(\de_{in}+\de_{jn}))_+$.
\end{thm}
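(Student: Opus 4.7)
The plan is to obtain \eqref{eq_Green_estimate} by specializing the general bound \cite[Theorem 1.5]{Green} to dimension $n\ge 3$. The dimensional restriction is precisely what suppresses the logarithmic corrections coming from the two-dimensional Newtonian kernel $E(x)=-\tfrac{1}{2\pi}\log|x|$, leaving the clean power-type bound on the right-hand side of \eqref{eq_Green_estimate}. So the first step is to recognize that the $n\ge 3$ case corresponds to selecting the non-logarithmic branches of the general estimate.

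To derive the inequality from scratch, I would begin with the explicit decomposition used in \cite{Green}:
\[
G_{ij}(x,y,t) = \delta_{ij}\bigl(\Gamma(x-y,t) - \Gamma(x^*-y,t)\bigr) + W_{ij}(x,y,t),
\]
where $x^*=(x',-x_n)$ and the correction $W_{ij}$ is constructed by convolving the Golovkin Poisson kernel $K_{ij}$ (via \eqref{Golovkin}) against the boundary trace of the reflected heat kernel, plus the pressure-Newtonian piece extracted from \eqref{eq-hat-p-def}. The first term on the right-hand side of \eqref{eq_Green_estimate} comes from differentiating $\Gamma(x-y,t)$ directly and using Gaussian tails. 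The reflected heat kernel $\Gamma(x^*-y,t)$ and $W_{ij}$ together must then be bounded by the second term. To do this I would apply the pointwise Golovkin bound \eqref{eq_estKij} together with Lemmas \ref{lemma2.1} and \ref{lemma2.2} to the resulting spacetime integrals, converting them into products of the form $(|x^*-y|^2+t)^{-(l+n+\sigma)/2}\bigl((x_n+y_n)^2+t\bigr)^{-(k+q-\sigma)/2}$, where $\sigma=\sigma_{ijkq}$ is the exponent to be identified.

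The main obstacle is the sharp bookkeeping of $\sigma_{ijkq}$. Two mechanisms contribute: first, \eqref{eq_estKij} already supplies an extra boundary-normal factor $(x_n^2+t)^{-1/2}$ whenever $i<n$ and $j=n$, which is what drives the increment $\delta_{in}+\delta_{jn}$ (once one further differentiates in $x_n$ or $y_n$ so that $k+q\ge 1$); second, the anomalous correction $-\delta_{in}\delta_{jn}\delta_{k+q=1}$ reflects a genuine algebraic cancellation in the case $i=j=n$ with exactly one normal derivative, where the pressure-Newtonian contribution from \eqref{eq-hat-p-def} kills the leading boundary singularity of $W_{nn}$. Identifying and extracting this cancellation \emph{before} taking absolute values, rather than bounding each piece separately, is the delicate step; without it one only recovers $\sigma_{ijkq}=0$. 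Once the cancellation is in place, all remaining integrals are estimated by Lemma \ref{lemma2.2} with $a\sim b\sim x_n+y_n$, and the assumption $n\ge 3$ keeps the parameters safely away from the critical exponents $k=d$ or $m=d$ of that lemma so that no logarithm is generated.
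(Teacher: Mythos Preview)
Your opening sentence is exactly right, and in fact it is \emph{all} the paper does: the theorem is stated without proof, introduced by the sentence ``The following theorem is \cite[Theorem 1.5]{Green} restricted to dimension $n\ge 3$.'' There is no argument given in the present paper; the full proof lives in the companion paper \cite{Green}, and here the result is simply quoted for use in Section~\ref{Sec4}. So the correct ``proof'' in the context of this paper is a one-line citation, which you already have.

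The remainder of your proposal is a speculative reconstruction of how \cite{Green} might argue. As such it is not something to compare against this paper's proof, since there is none. A few cautions if you intend it as a genuine sketch of \cite{Green}: your identification of the mechanism behind $\sigma_{ijkq}$ is not quite aligned with the Golovkin exponent in \eqref{eq_estKij}, where $\sigma=\delta_{i<n}\delta_{jn}$ rather than $\delta_{in}+\delta_{jn}$; the actual derivation of $\sigma_{ijkq}$ in \cite{Green} proceeds through the $C_i$ functions and the identities of Lemma~\ref{lem2.5} (see \cite[(4.6), (4.8)]{Green}) rather than directly through $K_{ij}$. But for the purposes of the present paper none of this is needed.
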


We will use the following functions
\begin{equation}\label{eq_def_A}A(x,t)=\int_\Si\Ga(z',0,t)E(x-z')\,dz'=\int_\Si\Ga(x'-z',0,t)E(z',x_n)\,dz'\end{equation} and
\begin{equation}\label{eq_def_B}B(x,t)=\int_\Si\Ga(x-z',t)E(z',0)\,dz'=\int_\Si\Ga(z',x_n,t)E(x'-z',0)\,dz'.\end{equation}
They are defined in \cite{MR0171094} for $n=3$.
The estimates for $A, B$, and their derivatives are given in \cite[(62, 63)]{MR0171094} for $n=3$. For general dimension $n\ge2$, we can use the same approach and derive the following estimates for $l+n\ge3$:
\begin{equation}\label{eq_estA}|\pd_x^l\pd_t^mA(x,t)|\lesssim\frac1{t^{m+\frac12}(x^2+t)^{\frac{l+n-2}2}}\end{equation}
 and
\begin{equation}\label{eq_estB}|\pd_{x'}^l\pd_{x_n}^k\pd_t^mB(x,t)|\lesssim\frac1{(x^2+t)^{\frac{l+n-2}2}(x_n^2+t)^{\frac{k+1}2+m}}.\end{equation}
In fact, the last line of \cite[page 39]{MR0171094} gives
\begin{equation}\label{eq_estB2}|\pd_{x'}^l\pd_{x_n}^k B(x,t)|\lesssim\frac1{(x^2+t)^{\frac{l+n-2}2}t^{\frac{k+1}2}}\,e^{-\frac{x_n^2}{10t}}.\end{equation}

\begin{remark}\label{remark_est_AB}
For $n=2$, the condition $l\ge1$ is needed as $A(x,t)$ and $B(x,t)$ grow logarithmically as $|x|\to \infty$.
In fact, one may prove for $n=2$
\[
|A(x,t)|+|B(x,t)|\lesssim\frac{1 + |\log(|x_2|+\sqrt{t})|+|\log(|x_1|+|x_2|+\sqrt{t})|}{\sqrt{t}}.
\]
\end{remark}

Define as in \cite[(4.4)]{Green}
\EQ{\label{eq_def_Ci}
C_i(x,y,t)=\int_0^{x_n}\!\int_\Si\pd_n\Ga(x-y^*-z,t)\,\pd_iE(z)\,dz.
}
It is related to the Golovkin tensor via \eqref{Kijeq1} below, and to the Green tensor, see \cite[(4.6), (4.8)]{Green}.
We will prove in Appendix B (\S\ref{Sec6}) the following two lemmas for $C_i$.
\begin{lem}\label{lem2.5}
We have
\begin{align}
\label{eq0108c}
\pd_{y_n}C_i(x,y,t) & = \pd_{x_n}C_i(x,y,t)-\left(\pd_{y_n}e^{-\frac{y_n^2}{4t}}\right)\pd_iA(x-y',t), \quad\, \forall\, i\\
\label{eq0108a}
\pd_{x_n}C_i(x,y,t) &= \pd_{x_i}C_{n} (x,y,t) + \pd_{x_i}\pd_{x_n}B(x-y^*,t), \quad (i \not =n),
\\
\label{eq0108b}
\pd_{x_n}C_n(x,y,t) &= -\sum_{k<n}\pd_{x_k}C_k(x,y,t) - \frac12\, \pd_n\Ga(x-y^*,t).
\end{align}
\end{lem}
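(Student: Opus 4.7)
My plan is to prove all three identities by differentiating $C_i$ under the integral sign, carefully tracking a boundary contribution at $z_n=x_n$ (coming from the $x_n$ in the upper limit), performing tangential integration by parts in $z'$ (which produces no boundary terms on $\Sigma$), and performing $z_n$-integration by parts where needed, using
\[
\partial_{z_n}[\partial_n\Gamma(x-y^*-z,t)] = -\partial_n^2\Gamma(x-y^*-z,t),\qquad \Gamma(w',w_n,t)=\Gamma(w',0,t)\,e^{-w_n^2/(4t)}.
\]
For \eqref{eq0108c}, only $x_n$ appears in the limit of integration, while $\partial_{x_n}$ and $\partial_{y_n}$ have the \emph{same} effect on the integrand since $y^*_n=-y_n$ contributes through $x_n+y_n-z_n$ in $\partial_n\Gamma$. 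Hence $\partial_{x_n}C_i-\partial_{y_n}C_i$ reduces to the $z_n=x_n$ boundary term, which at that point has $n$-th argument $y_n$. Using $\partial_n\Gamma(w',y_n,t)=\Gamma(w',0,t)\,\partial_{y_n}(e^{-y_n^2/(4t)})$ factors out the scalar $\partial_{y_n}(e^{-y_n^2/(4t)})$, and the remaining $z'$-integral is exactly $\partial_i A(x-y',t)$ from the second form of \eqref{eq_def_A}.

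For \eqref{eq0108a} with $i<n$, I would compute $\partial_{x_i}C_n$ first by converting $\partial_{x_i}$ to $-\partial_{z_i}$ and integrating by parts in $z_i$ (no boundary) to get $\int_0^{x_n}\int_\Sigma\partial_n\Gamma\,\partial_{z_i}\partial_n E\,dz$. Since $\partial_{z_i}\partial_n E=\partial_{z_n}\partial_{z_i}E$, a further integration by parts in $z_n$ produces the same interior integral $\int\partial_n^2\Gamma\,\partial_i E\,dz$ and the same $z_n=x_n$ boundary term that appear in $\partial_{x_n}C_i$, together with one extra boundary term at $z_n=0$ of the form $-\int_\Sigma\partial_n\Gamma(x'-y'-z',x_n+y_n,t)\,\partial_iE(z',0)\,dz'$. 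Subtracting then gives
\[
\partial_{x_n}C_i-\partial_{x_i}C_n=\int_\Sigma\partial_n\Gamma(x'-y'-z',x_n+y_n,t)\,\partial_iE(z',0)\,dz',
\]
which is identified as $\partial_{x_i}\partial_{x_n}B(x-y^*,t)$ via the second form of \eqref{eq_def_B} and integration by parts in $z_i$.

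For \eqref{eq0108b}, I sum $\sum_{k=1}^n\partial_{x_k}C_k$. For $k<n$, tangential integration by parts gives $\partial_{x_k}C_k=\int_0^{x_n}\int_\Sigma\partial_n\Gamma\,\partial_k^2 E\,dz$. For $k=n$, repeating the $z_n$-IBP used for \eqref{eq0108a} (with $i=n$) produces an identical structure with $\partial_n^2 E$ in the interior integral and a $z_n=0$ boundary term. Crucially, the combined $z_n=0$ boundary terms involve $\partial_n E(z',0)$, which vanishes because $E(z',z_n)$ is even in $z_n$; the $z_n=x_n$ boundary terms also cancel. What remains is $\int_\Omega\partial_n\Gamma(x-y^*-z,t)\,\Delta E(z)\,dz$ over the slab $\Omega=\Sigma\times(0,x_n)$. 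Since $\Delta E=-\delta_0$ and the singularity $z=0$ sits \emph{on} the boundary $\{z_n=0\}$, a standard symmetric mollification of $E$ shows that this integral equals $-\tfrac12\partial_n\Gamma(x-y^*,t)$, proving \eqref{eq0108b}.

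The main obstacle is the bookkeeping of boundary contributions: one must verify that the $z_n=x_n$ terms match exactly between $\partial_{x_n}C_i$ and the IBP'd form of $\partial_{x_i}C_n$, and that the $z_n=0$ contributions from $k<n$ and $k=n$ combine into something involving only $\partial_n E(z',0)=0$. The factor $\tfrac12$ in \eqref{eq0108b}, arising from the delta mass sitting on the flat boundary of the half-space, is the only nontrivial analytical input beyond linear algebra of integration by parts.
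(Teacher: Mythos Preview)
Your arguments for \eqref{eq0108c} and \eqref{eq0108a} are correct and essentially match the paper's; for \eqref{eq0108a} the paper first shifts variables $z\mapsto x-y^*-z$ so that $x_n$ appears only in the upper limit of the $z_n$-integral, but the resulting endpoint computations are equivalent to yours.

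For \eqref{eq0108b} your route is correct but genuinely different from the paper's. Rather than summing $\sum_k\partial_{x_k}C_k$ directly, the paper first rewrites $C_n$ itself via integration by parts in $z_n$ over $(\varepsilon,x_n)$ and the substitution $\partial_n^2E=-\sum_{\beta<n}\partial_\beta^2E$, obtaining
\[
C_n(x,y,t) = -e^{-y_n^2/(4t)}\,\partial_nA(x'-y',x_n,t) - \tfrac12\,\Gamma(x-y^*,t) - \sum_{\beta<n}\partial_{x_\beta}\!\int_0^{x_n}\!\!\int_\Sigma\Gamma((x-y^*)-z,t)\,\partial_\beta E(z)\,dz,
\]
where the $\tfrac12$ comes from the explicit limit $\lim_{\varepsilon\to0_+}\partial_nA(x'-y',\varepsilon,t)=-\tfrac12\,\Gamma(x'-y',0,t)$, identified via the half-space Poisson kernel for $-\Delta$. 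Taking $\partial_{x_n}$ and using $\Delta_xA=0$ then yields \eqref{eq0108b}. Your mollification argument captures the same $\tfrac12$ as the mass of $-\Delta E_\epsilon=\rho_\epsilon$ lying in $\{z_n>0\}$; this is the identical analytical fact in different bookkeeping. One point worth sharpening in your write-up: \emph{without} mollification the boundary term $\int_\Sigma\partial_n\Gamma(\cdot,x_n+y_n,t)\,\partial_nE(z',0_+)\,dz'$ is not zero --- it equals exactly $-\tfrac12\,\partial_n\Gamma(x-y^*,t)$ by the Poisson-kernel identity --- so the symmetric mollification you invoke is what makes your accounting (boundary $=0$, bulk $=-\tfrac12\,\partial_n\Gamma$) valid, not merely a technical convenience. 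The paper's route has the side benefit of producing the intermediate formula for $C_n$ displayed above; yours is slightly more direct for the identity itself.
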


The special case of \eqref{eq0108a} and \eqref{eq0108b} for $y=0$ and $n=3$ are Solonnikov \cite[(67), (68)]{MR0171094}.

\begin{lem}\thlabel{Ci_estimate}
Let $n \ge 2$. For $x\in\R^n_+$, $y\in\R^n$ and $t>0$ we have
\begin{equation}\label{eq_Ci_estimate_yn}
\left|\pd_{x',y'}^l\pd_{x_n}^k\pd_{y_n}^{q}\pd_t^mC_i(x,y,t)\right|
\lesssim\frac{\exp\bke{-\frac{ ((y_n)_+)^2}{20t}}}{t^{m+\frac{q+1}2}\left(|x-y^*|^2+t\right)^{\frac{l+n-1}2}\left((x_n+y_n)^2+t\right)^{\frac{k}2}}.
\end{equation}
In particular, when $y=0$, we have
\begin{equation}\label{Ci-est}
\left|\pd_{x',y'}^l\pd_{x_n}^k\pd_t^mC_i(x,0,t)\right|
\lesssim\frac{1}{t^{m+\frac12} \left(|x|^2+t\right)^{\frac{l+n-1}2}\left(x_n^2+t\right)^{\frac{k}2}}.
\end{equation}
\end{lem}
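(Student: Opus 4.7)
My plan is to prove \eqref{eq_Ci_estimate_yn} in three stages: first establish the base case $k=q=0$ by direct pointwise estimation of the defining integral \eqref{eq_def_Ci}; then extend to $k \ge 1$ by iterating the identities \eqref{eq0108a}--\eqref{eq0108b} of Lemma \ref{lem2.5}; and finally extend to $q \ge 1$ by iterating \eqref{eq0108c}. The special case $y=0$ in \eqref{Ci-est} follows immediately.

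For the base case, I move $\pd_{x',y'}^l\pd_t^m$ under the integral sign in \eqref{eq_def_Ci}, using $\pd_{y'} = -\pd_{x'}$ on functions of $x-y^*$, to reduce the bound to the spatial integral
\[
\int_0^{x_n}\!\int_{\R^{n-1}} |\pd_{w'}^l \pd_t^m \pd_n \Ga(x-y^*-z,t)|\cdot |\pd_i E(z)|\,dz'\,dz_n.
\]
The Gaussian heat bound $|\pd_{w'}^l \pd_t^m \pd_n \Ga(w,t)|\lec t^{-m-1/2}(|w|^2+t)^{-(l+n)/2}e^{-|w|^2/(Ct)}$ combined with $|\pd_i E(z)| \lec |z|^{1-n}$ and the constraint $(x-y^*-z)_n = x_n+y_n-z_n\ge y_n$ (valid for $z_n\le x_n$ and $y_n\ge 0$) pulls out the factor $e^{-(y_n)_+^2/(Ct)}$; the remaining spatial integration is handled via Lemma \ref{lemma2.2} after organizing the $z_n$ integration, yielding a bound by $(|x-y^*|^2+t)^{-(l+n-1)/2}$ with the required $t^{-m-1/2}$ prefactor.

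For $k\ge 1$ (still with $q=0$), I iterate \eqref{eq0108a}--\eqref{eq0108b}: each $\pd_{x_n}$ applied to $C_i$ ($i<n$) is traded for a tangential $\pd_{x_i}$ applied to $C_n$, plus an error $\pd_{x_i}\pd_{x_n} B(x-y^*,t)$; and $\pd_{x_n}$ on $C_n$ produces $-\sum_{k<n}\pd_{x_k}C_k$ plus an error $\pd_n \Ga(x-y^*,t)$. After $k$ iterations, the main term is a linear combination of $k$-th order tangential derivatives applied to some $C_j$, bounded by the base case; each error term, a derivative of $B$ or $\Ga$ evaluated at $x-y^*$, is bounded by \eqref{eq_estB} or standard heat kernel estimates, and individually satisfies \eqref{eq_Ci_estimate_yn} after using $(|x-y^*|^2+t)\ge ((x_n+y_n)^2+t)$ to redistribute powers. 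For $q\ge 1$, iterating \eqref{eq0108c} (using that $\pd_iA(x-y',t)$ is $y_n$-independent while $g(y_n,t):=e^{-y_n^2/(4t)}$ is $(x,y')$-independent, so the two factors commute with each other's derivatives) yields by induction
\[
\pd_{y_n}^q C_i = \pd_{x_n}^q C_i - \sum_{j=0}^{q-1} g^{(q-j)}(y_n,t)\, \pd_{x_n}^j \pd_i A(x-y',t).
\]
Applying $\pd_{x',y'}^l\pd_{x_n}^k\pd_t^m$ and bounding the main term by stage (ii) (the resulting $((x_n+y_n)^2+t)^{-(k+q)/2}$ upgrades to the target $t^{-(q+1)/2 - m}((x_n+y_n)^2+t)^{-k/2}$ via $t\le(x_n+y_n)^2+t$), together with the $A$-estimates \eqref{eq_estA} and $|\pd_t^a g^{(q-j)}(y_n,t)|\lec t^{-(q-j)/2-a}e^{-y_n^2/(Ct)}$ for the error terms, gives the claimed bound.

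The main technical obstacle will be reconciling the natural $A$-denominator $|x-y'|^2+t$ with the target $|x-y^*|^2+t$. For $y_n\ge 0$, $|x-y^*|^2 - |x-y'|^2 = y_n^2+2x_ny_n$ combined with AM-GM gives $(|x-y^*|^2+t)/(|x-y'|^2+t)\lec 1+y_n^2/t$, so $(|x-y'|^2+t)^{-\al}\le (|x-y^*|^2+t)^{-\al}(1+y_n^2/t)^\al$; the polynomial factor $(1+y_n^2/t)^\al$ is then absorbed by the Gaussian $e^{-y_n^2/(Ct)}$ at the cost of enlarging the constant in the exponent, which is precisely the mechanism that produces the specific constant $20$ in the statement after tracking the accumulated losses. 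For $y_n<0$ one has $(y_n)_+=0$ and $|x-y^*|\le|x-y'|$, so this conversion is trivial.
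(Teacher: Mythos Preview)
Your stages (ii) and (iii) are sound, and stage (iii) via iteration of \eqref{eq0108c} is a legitimate alternative to the paper's route (the paper instead absorbs $\pd_{y_n}^q$ directly into the base case, since $\pd_{y_n}$ on the integral simply adds one more normal derivative on $\Ga$). The problem is your base case (i) for $l\ge1$: placing all $l$ tangential derivatives on $\Ga$ and then taking absolute values does not yield the claimed $(|x-y^*|^2+t)^{-(l+n-1)/2}$.

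Reduce to $t=1$ and set $R:=|x-y^*|\gg1$. In the region $|z|>R/2$ you can only say $|z|^{1-n}\lesssim R^{1-n}$, and the remaining integral $\int_{\Pi}(|w|^2+1)^{-(l+n)/2}e^{-|w|^2/C}\,dw$ over the slab $\Pi=\{w_n\in[y_n,x_n+y_n]\}$ is merely $O(1)$ whenever the slab meets $\{|w_n|\lesssim1\}$ (for instance when $-x_n\le y_n\le0$). This gives $R^{1-n}$, short of the target $R^{-(l+n-1)}$; the Gaussian localizes $w$ near the origin, not near $x-y^*$, so neither Lemma~\ref{lemma2.2} nor ``organizing the $z_n$ integration'' can recover the missing $R^{-l}$.

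The fix, and the paper's key step, is to integrate by parts in $w'$ on the region $\Pi_1=\Pi\cap\{|w|<\tfrac34 R\}$ to move $\pd_{w'}^l$ from $\Ga$ onto $E$. On $\Pi_1$ one has $|x-y^*-w|\gtrsim R$, so $|\pd_{w'}^l\pd_iE(x-y^*-w)|\lesssim R^{-(l+n-1)}$, and the Gaussian factor integrates to $O(1)$; the boundary terms and the far piece $\Pi_2$ are $O(e^{-cR^2})$. With this correction your plan goes through. One small side remark: in your last paragraph, the inequality $|x-y^*|\le|x-y'|$ fails when $y_n<-2x_n$, but since every error term in stage (iii) carries the full Gaussian $e^{-y_n^2/(Ct)}$ from $g$, the polynomial loss $(1+y_n^2/t)^\alpha$ is absorbed there as well.
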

This lemma is stated in \cite[Remark 5.2]{Green}. The special case $n=3$ and $y=0$ is \cite[(69)]{MR0171094}.

We may rewrite the definition \eqref{eq_def_Kij} of the Golovkin tensor $K_{ij}$ as
\EQ{
K_{ij}(x,t) = \wt K_{ij}(x,t)    -2\,\de_{nj}\pd_iE(x)\de(t),
}
where $\wt K_{ij}$ denotes the first two terms (and the function part) of \eqref{eq_def_Kij},
\EQ{
\label{Kijeq1}
\wt K_{ij}(x,t) = - 2\de_{ij}\pd_n \Ga(x,t) - 4 \pd_j C_i(x,0,t).
}
We have $K_{ij}(x,t) = \wt K_{ij}(x,t)$ for $t>0$. By Lemma \ref{lem2.5} with $y=0$,
we have the alternative formula when $j=n$,
\EQ{\label{Kijeq2}
\wt K_{in}(x,t) =
\begin{cases}
{-4  \pd_iC_n(x,0,t)  -4  \pd_i \pd_n B(x,t)  }&\quad i<n,\\[2mm]
{\sum_{k<n}4\pd_{k}C_k(x,0,t)}&\quad i=n.
\end{cases}
}

We can recover $K_{ij}$ estimates \eqref{eq_estKij} using \eqref{Kijeq1} for $j<n$, \eqref{Kijeq2} for $j=n$, and
$C_i$ estimate \eqref{Ci-est}. The key is that there is no $x_n$ derivative acting on $C_k$ in  \eqref{Kijeq1} for $j<n$ and \eqref{Kijeq2} for $j=n$.

Note that $A$ and $B$ defined in Solonnikov \cite[(60)-(61) on p.37]{MR0171094} and
 $C_i^{\text{Slnk}}(x,t)$ on \cite[(66) on p.40]{MR0171094} are only defined for $n=3$,  and differ from ours by a factor of $4\pi$: For $n=3$,
\EQ{
 A(x,t) = 4\pi A^{\text{Slnk}}(x,t) ,  \quad  B(x,t) = 4\pi B^{\text{Slnk}}(x,t), \quad  C_i(x,0,t) =  4\pi C_i^{\text{Slnk}}(x,t).
}

\section{Stokes flows}\label{Sec3}

In this section we give pointwise bounds of %
the solution {($\hat v(x,t),\hat p(x,t))$} of the Stokes system \eqref{E1.1} with boundary data $\phi$ defined on $\Si \times \R$ by
\eqref{boundary-data1}-\eqref{boundary-data3}.

Note that $\phi$ has only the normal component, and its tangential component is zero.
{Using \eqref{Golovkin} and \eqref{eq-hat-p-def},
$(\hat v, \hat p)$} is given by
\EQS{
\hat v_i(x,t)
&= \int_{-\infty}^\infty \int_{\Si} K_{in}(x-\xi',t-s) \phi_n(\xi',s)\, d\xi' ds\\
&=
 \int_0^t \int_{\Si} \wt K_{in}(x-\xi',t-s) \phi_n(\xi',s)\, d\xi' ds
 -2 \int_{\Si} \pd_iE(x-\xi') \phi_n(\xi',t)\, d\xi',
\label{hatv.formula}
}
{
and
\EQS{\label{hatp.formula}
\hat p(x,t)=&~2\pd_n^2\int_{\Si}E(x-\xi')\phi_n(\xi',t)\,d\xi'
+2\int_{\Si}E(x-\xi')\pd_t\phi_n(\xi',t)\,d\xi'\\
&-4(\pd_t-\De_{x'})\int_{-\infty}^\infty\int_{\Si}\pd_nA(x-\xi',t-\tau)\phi_n(\xi',\tau)\,d\xi'd\tau
}
}
where $\wt K_{in}(x,t) $ is given in \eqref{Kijeq1} and \eqref{Kijeq2}. As explained after \eqref{Kijeq2}, it has better estimates than \eqref{Kijeq1} when $j=n$.
Another advantage of \eqref{Kijeq2} over \eqref{Kijeq1} is that we can integrate by parts and move the tangential derivative to $\phi(\xi',s)$.

As $\phi(\xi',s)=0$ for $s \le 1/4$, we know $\hat v_i(x,t) =0$ for $t<1/4$ from these formulas.

\begin{prop}
\label{th3.1}
Let $n \ge 2$ and $\phi(\xi',s)$ be defined on $\Si \times \R$ by
\eqref{boundary-data1}--\eqref{boundary-data3} for $0<a\le 1/2$.
Let $\hat v(x,t)$ be the solution of the Stokes system \eqref{E1.1} with boundary data $\phi$, zero initial data and zero force, given by \eqref{hatv.formula}.
 Then
\EQ{\label{th31eq1}
| \hat v_i(x,t) | \le  \frac{CN_1}{\bka{x}^{n-1}} ,
}
\EQS{\label{th31eq2}
| \pd_{x_j} \hat v_i(x,t) | \le CN_2 \bigg[ &\frac{1}{\bka{x}^{n}}
 + \frac{\si \LN } {\bka{x} ^{n-1}(x_n+1)^{2a}}
 \\
& + \de_{n2} \one_{|x|<2}\, \si \LN \log\bke{2+ \frac{1}{x_2^2+|t-1|}} \bigg],
}
\EQ{\label{LN.def}
\LN =  (x_n^2+|t-1|)^{a-\frac12} + \de_{a=\frac12} \log\bke{2+ \frac{1}{x_n^2+|t-1|}} ,
}
for $x\in \R^n_+$ and $ t\in[0,2]$.
Above
\[
N_1=  \textstyle\sum_{j=0}^1 \|\nb_{\xi'}^j\phi\|_{L^\infty},\quad
N_2= \textstyle\sum_{j=0}^2 \|\nb_{\xi'}^j\phi\|_{L^\infty}+ \textstyle\sum_{j=0}^1 \norm{(1-s)^{1-a}\pd_s \nb_{\xi'}^j \phi(\xi',s)}_{L^\infty} ,
\]
$\si = \de_{i<n} \de_{jn}$ as in \eqref{eq_estKij},
and $C$ does not depend on $\phi$.
\end{prop}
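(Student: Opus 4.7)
The plan is to work from the representation \eqref{hatv.formula} using the alternative formula \eqref{Kijeq2} for $\wt K_{in}$, whose crucial feature is that no $\pd_n$ falls on the $C_k$'s; only a single $\pd_n$ appears, and only on $B$. Tangential derivatives on $C_k$ can therefore be integrated by parts in $\xi'$ onto $\phi_n$ whenever convenient. I organize the proof case by case on $(i,j)$, combining the pointwise kernel estimates \eqref{Ci-est}, \eqref{eq_estB}, and the decay of $E$ and its derivatives, with the integral Lemmas \ref{lemma2.1}--\ref{th2.3}. Since $g$ is supported in $B_1'$ and $h$ in $[\tfrac14,1]$, the far field $|x|\ge 2$ is straightforward (the space integration is trivial as $|x-\xi'|\sim\bka{x}$ on the support of the data), while the near field $|x|<2$ uses local integrability of the kernels after integration by parts in $\xi'$.

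For $|\hat v_i|$, the main time integrals $\int_{1/4}^t h(s)(t-s)^{-1/2}\,ds$ and $\int_{1/4}^t h(s)(x_n^2+t-s)^{-1}\,ds$ are uniformly bounded on $t\in[0,2]$, the latter because $h(s)\lesssim(1-s)^a$ absorbs the $(t-s)^{-1}$ singularity near $s=1$. These bounds, combined with the spatial integrations, give $|\hat v_i|\lesssim N_1/\bka{x}^{n-1}$. For tangential derivatives $\pd_{x_j}\hat v_i$ with $j<n$, I differentiate under the integral and either keep the extra $\pd_j$ on the kernel in the far field (gaining one power of decay, producing $\bka{x}^{-n}$) or integrate by parts in $\xi_j$ in the near field, shifting $\pd_j$ to $\phi_n$ at the cost of $N_2$.

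The main obstacle is the normal derivative $\pd_{x_n}\hat v_i$. When $i=n$, $\pd_n\wt K_{nn}=4\sum_{k<n}\pd_k\pd_n C_k$, and identity \eqref{eq0108a} rewrites each $\pd_n C_k$ as $\pd_k C_n+\pd_k\pd_n B$, reducing to the structure of the previous paragraph. When $i<n$, $\pd_n\wt K_{in}=-4\pd_i\pd_n C_n-4\pd_i\pd_n^2 B$; identity \eqref{eq0108b} disposes of the first term analogously. The genuinely problematic piece is $\pd_i\pd_n^2 B$, whose pointwise bound carries $(x_n^2+t-s)^{-3/2}$ and produces a non-integrable time integral near $s=1$. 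I resolve this via the heat equation $\pd_n^2 B=\pd_t B-\De_{x'}B$: the $\De_{x'}B$ part is handled by double integration by parts in $\xi'$ onto $\phi$; for the $\pd_t B$ part I use $\pd_t B(x-\xi',t-s)=-\pd_s B(x-\xi',t-s)$ and integrate by parts in $s$. The boundary terms vanish because $\phi_n(\xi',0)=0$ (handling $s=0$) and $B(x-\xi',0)=0$ for $x_n>0$ by exponential decay of $\Ga$ as $t\to 0^+$ (handling $s=t$), leaving
\[
\int_0^t\!\int_\Si \pd_{x_i}B(x-\xi',t-s)\,\pd_s\phi_n(\xi',s)\,d\xi'\,ds,
\]
with $|\pd_s\phi_n|\lesssim N_2(1-s)^{a-1}$ from the $(1-s)^{1-a}$-weighted norm in $N_2$. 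In the far field $|\pd_{x_i}B|\lesssim\bka{x}^{-(n-1)}(x_n^2+t-s)^{-1/2}$, and the weighted time integral
\[
\int_0^1\frac{(1-s)^{a-1}\,ds}{(x_n^2+|t-1|+1-s)^{1/2}}
\]
splits at the scale $u=1-s\sim x_n^2+|t-1|$ and evaluates to $\LN/(x_n+1)^{2a}$, the logarithmic piece at $a=\tfrac12$ coming from the borderline integration (cf.\ Lemma \ref{th2.3}); this produces the second term of \eqref{th31eq2}. For $n=2$ the fundamental solutions $E$, $A$, $B$ carry logarithmic growth at infinity (Remark \ref{remark_est_AB}), which in the near field $|x|<2$ contributes the extra $\log(2+1/(x_2^2+|t-1|))$ factor, absorbed by spatial decay in the far field. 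The essential difficulty throughout is the $\pd_i\pd_n^2 B$ term: the heat-equation substitution, the time integration by parts, and the careful evaluation of the weighted time integral against $(1-s)^{a-1}$ are all needed to produce exactly the $\LN$ factor of \eqref{LN.def}.
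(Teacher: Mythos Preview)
Your overall architecture matches the paper's: decompose $\hat v_i=I+J$ via \eqref{hatv.formula} and \eqref{Kijeq2}, shift tangential derivatives onto $\phi_n$ by integration by parts in $\xi'$, and for the critical normal--derivative piece $\pd_i\pd_n^2 B$ use the heat equation $\pd_n^2 B=\pd_t B-\De_{x'}B$ and integrate by parts in $s$ to generate the weighted integral that produces $\LN$. That last part is correct and is exactly the paper's argument for $I_{22}$.

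There is, however, a genuine gap at the level of $\hat v_i$ itself (and it recurs verbatim for the tangential derivatives $\pd_j\hat v_i$, $j<n$). You assert that
\[
\int_{1/4}^{t} h(s)\,(x_n^2+t-s)^{-1}\,ds
\]
is uniformly bounded on $[0,2]$ ``because $h(s)\lesssim(1-s)^a$ absorbs the $(t-s)^{-1}$ singularity near $s=1$.'' But the singularity of the integrand sits at $s=t$, not at $s=1$. For any fixed $t\in(\tfrac12,1)$ one has $h(t)=(1-t)^a>0$ there, and the integral diverges like $\log(1/x_n)$ as $x_n\to 0_+$. This is precisely the logarithmic loss the paper flags just after \eqref{I-x>2}: a direct application of \eqref{eq_estB} or \eqref{eq_estB2} to $\pd_i\pd_n B$ does \emph{not} give a bounded contribution to $I_2$.

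The missing ingredient is the explicit identity $\pd_n B(x,s)=-\frac{x_n}{2s}\,B(x,s)$, immediate from the definition \eqref{eq_def_B}. Combined with \eqref{eq_estB2} it yields
\[
|\pd_i\pd_n B(x-\xi',s)|\ \lesssim\ \frac{x_n}{s}\cdot\frac{e^{-x_n^2/(10s)}}{(|x-\xi'|+\sqrt s)^{n-1}\sqrt s},
\]
and now the time integral $\int_0^t x_n\,s^{-3/2}e^{-x_n^2/(10s)}\,ds$ is uniformly bounded (substitute $\tau=s/x_n^2$). The extra factor $x_n/\sqrt s$ is essential; without it the integral is $\int_0^{t/x_n^2}\tau^{-1}e^{-1/(10\tau)}\,d\tau$, which blows up. For the near field $|x|<2$ the paper pairs this with the cancellation $\int_{\Si}\pd_i\pd_n B(x-\xi',s)\,d\xi'=0$ and the mean value theorem on $\phi_n(\xi',\cdot)-\phi_n(x',\cdot)$; integration by parts in $\xi'$ alone still leaves the same divergent time integral. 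You need the same repair for $\pd_j I_2$ when $j<n$.
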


\begin{remark}\label{rem3.2}
\EN{
\item[(i)] When $a<1/2$ and $x_n>2$, we have
$\LN \sim x_n^{2a-1}$, and \eqref{th31eq2} becomes
\EQ{
| \pd_{x_j} \hat v_i(x,t) | \le CN_2 \bigg[ \frac{1}{\bka{x}^{n}}
 + \frac{\si  } {\bka{x} ^{n-1}(x_n+1)} \bigg], \quad (x_n>2).
}
\item[(ii)]
For $n\ge2$ including $n=2$, our proof shows that
\EQ{\label{eq3.6}
\abs{\pd_{x_j} \hat v_i(x,t) - \si \pd_n I_2(x,t) } \le \frac{CN_2}{\bka{x}^n}
}
for $x\in \R^n_+$ and $ t\in[0,2]$.
Here $\pd_n I_2$ will be given in \eqref{dnI.dec} when $\si=1$, and is the main term in the estimate \eqref{th31eq2} when $\si = 1$.
}
\end{remark}

\begin{proof}\quad
{\bf Step 1}.\quad
We first estimate $\hat v_i(x,t)$ and we may assume $N_1\le1$. Denote formula \eqref{hatv.formula} for $\hat v_i$ as
$\hat v_i(x,t) = I+J$, where $I$ is the space-time integral, and $J$ is the space only integral.

We first consider
\[
J=  -2 \int_{\Si} \pd_iE(x-\xi') \phi_n(\xi',t)\, d\xi'.
\]
If $|x|\ge2$, we have
\[
|J|\lec \int_{B_1'} \frac1{|x-\xi'|^{n-1}} \, d\xi' \lec \frac1{|x|^{n-1}}.
\]
If $|x|<2$ and $i<n$, we can integrate by parts in $\xi_i$ to get
\[
|J|\lec   \int_{B_1'} |E(x-\xi')| \, d\xi' \lec  1
\]
for either $n\ge 3$ or $n=2$. If $|x|<2$ and $i=n$,
\[
|J| \lec \int_{B_1'} \frac{x_n}{|x-\xi'|^n}\, d\xi' \lec 1
\]
by  Lemma \ref{lemma2.1}. We conclude for $n\ge 2$,
\EQ{\label{0115a}
|J|\le \frac C{\bka{x}^{n-1}}, \quad (x\in \R^n_+,\ t\in\R).
}

We next consider
\[
I=\int_0^t \int_{\Si} \wt K_{in}(x-\xi',s)\, \phi_n(\xi',t-s)\, d\xi' ds.
\]
If we estimate it directly using \eqref{eq_estKij}, then for $|x|>2$ and $i<n$ (so that $\si=1$ in \eqref{eq_estKij}), we get
\EQS{\label{I-x>2}
I &\lec \int_0^t \int_{B_1'} \frac1{(|x-\xi'|+\sqrt{s})^{n-1} (x_n+\sqrt{s}) \sqrt{s}}\, d\xi'ds
\\
&\lec\frac{1}{|x|^{n-1}}  \int_0^t \frac1{(x_n+\sqrt{s})\sqrt{s}}\, ds
=\frac{2}{|x|^{n-1}}  \log(1+ \frac {\sqrt t}{x_n}) .
}
The last equality is by the change of variables $s=u^2$.
The log singularity in \eqref{I-x>2} is undesirable.

Alternatively,
we will use the formulas  for $\wt K_{in}$ in \eqref{Kijeq2}. First consider the case $i<n$.
We use \eqref{Kijeq2}$_1$ and integrate by parts in $\pd_{x_i}=-\pd_{\xi_i}$ the term $\pd_i C_n$ to get for $i<n$,
\EQS{
I &= -4
\int_0^t \int_{\Si}  C_{n}(x-\xi',0,s)\, \pd_i \phi_n(\xi',t-s)\, d\xi' ds
\\
&\qquad
-4 \int_0^t \int_{\Si}  \pd_i \pd_n B(x-\xi',s)\, \phi_n(\xi',t-s)\, d\xi' ds=: I_1 + I_2.
}

Using estimate \eqref{Ci-est} for $C_i$,
\EQ{\label{I1.est1}
|I_1| \lec \int_0^t \int_{B_1'} \frac1{(|x-\xi'|+\sqrt{s})^{n-1} \sqrt{s}}\, d\xi'ds .
}
If $|x|>2$, then $|x-\xi'| \sim |x|$ for $\xi'\in B_1'$ and %
\EQ{\label{I1.est2}
|I_1|\lec \int_0^t \frac1{|x|^{n-1} \sqrt{s}}\, ds
\lec \frac{\sqrt t}{|x|^{n-1}}.
}

If $|x|<2$, by \eqref{I1.est1}  and Lemma \ref{lemma2.1}, for $n \ge 3$,
\[
|I_1|\lec \int_{B_1'} \frac1{|x-\xi'|^{n-2}}\, d\xi' \lec 1,
\]
and for $n=2$,
\[
|I_1|\lec \int_{B_1'} 1 + \log_+  \frac t{|x-\xi'|^{2}}\, d\xi' \lec C_t.
\]
Together with \eqref{I1.est2}, we get
\EQ{\label{I1.est}
|I_1|\le \frac {C}{\bka{x}^{n-1}}, \quad (x\in \R^n_+,\ 0 <t\le 2).
}

For $I_2$ involving $\pd_nB$, we first substitute $\pd_nB(x,t) = -\frac{x_n}{2t} B(x,t)$ which follows from the definition \eqref{eq_def_B} of $B$. By estimate \eqref{eq_estB2} for $B$,
\EQ{\label{I2.est1}
|I_2 |
\lec \int_0^t \int_{B_1'} \frac{x_n}{s}\, \frac{e^{-\frac{x_n^2}{10s}}}{(|x-\xi'|+\sqrt{s})^{n-1}\sqrt{s}}\, d\xi'ds.
}

If $|x|>2$,
\Eq{
|I_2|
\lec   \frac1{|x|^{n-1}} \int_0^t \frac{x_n}s\, \frac{e^{-\frac{x_n^2}{10s}}}{\sqrt{s}}\, ds
=   \frac1{|x|^{n-1}}  \int_0^{t/x_n^2} \frac{e^{-\frac{1}{10\tau}}}{\tau^{3/2}}\, d\tau
\lec  \frac1{|x|^{n-1}}.
}
The replacement $\pd_nB= -\frac{x_n}{2t} B$ introduces a factor $\frac {x_n}{\sqrt s}$ in \eqref{I2.est1}. Without this factor, we get $I_2
\lec   \frac1{|x|^{n-1}} \int_0^{t/x_n^2} \frac{e^{-\frac{1}{10u}}}{u}\, du$, which is unbounded as $x_n \to 0_+$.

Suppose now $|x|<2$.
Noting that $B'_1\subset B'_3(x')$,
\EQN{
I_2
&=-4 \int_0^t \int_{B'_3(x')}  \pd_i \pd_n B(x-\xi',s)\, \phi_n(\xi',t-s)\, d\xi' ds
\\
&=-4 \int_0^t \int_{B'_3(x')}  \pd_i \pd_n B(x-\xi',s)\, [\phi_n(\xi',t-s)-\phi_n(x', t-s)]\, d\xi' ds,
}
where we have used $\int_{B'_3(x')}  \pd_i \pd_n B(x-\xi',s)d\xi'=0$.
By the mean value theorem,
\EQN{
|I_2 |
&\lec \int_0^t \int_{B'_3(x')} \abs{ \pd_i \pd_n B(x-\xi',s)}\,\norm{\nabla \phi_n (\cdot, t-s)}_{L^{\infty}_x}\abs{\xi'-x'} d\xi' ds
\\
&\lec \int_0^t \int_{B_3'(x')} \frac{x_n}{s}\, \frac{e^{-\frac{x_n^2}{10s}}}{(|x-\xi'|+\sqrt{s})^{n-1}\sqrt{s}}\abs{\xi'-x'}\, d\xi'ds
\\
&\lec \int_0^t \int_{B_3'} \frac{x_n}{s^{3/2}}\, \frac{e^{-\frac{x_n^2}{10s}}}{|\xi'|^{n-2}}\, d\xi'ds
\lec \int_0^t  \frac{x_n}{s^{3/2}}\, e^{-\frac{x_n^2}{10s}}\, ds\lec 1.
}

We conclude for $n\ge 2$, for all $i< n$,
\EQ{\label{0115b}
|I_2|\lec \frac 1{\bka{x}^{n-1}}  , \quad (x\in \R^n_+,\ t\in[0,2]).
}

When $i=n$, we rewrite $I$ using  \eqref{Kijeq2}$_2$ (instead of  \eqref{Kijeq2}$_1$)
and integration by parts. We get terms like $I_1$ but we do not get $I_2$. Thus, $I$ has the same estimate as \eqref{I1.est}.

The combination of \eqref{0115a}, %
\eqref{I1.est} and \eqref{0115b} gives \eqref{th31eq1}.

\bigskip

{\bf Step 2}.\quad
We now estimate $\pd_j \hat v_i(x,t)$ and we may assume $N_2\le1$. Using the same notation above, we have $\pd_j \hat v_i(x,t) = \pd_jI + \pd_jJ$, where $I$ is the space-time integral, and $J$ is the space only integral. %

We first consider
\[
\pd_jJ = -2 \int_\Si \pd_i\pd_jE(x-\xi') \phi_n(\xi',t)\, d\xi'.
\]
If $|x|\ge2$, we have
\[
|\pd_jJ| \lec \int_{B_1'} \frac1{|x-\xi'|^n}\, d\xi' \lec \frac1{|x|^n}.
\]
If $|x|<2$ and $i<n$, $j<n$, we can integrate by parts in $\xi_i$ and $\xi_j$ to get
\[
|\pd_jJ| \lec \int_{B_1'} |E(x-\xi')|\, d\xi' \lec 1
\]
for either $n\ge3$ or $n=2$. If $|x|<2$ and %
$j<n=i$, then we integrate by parts in $\xi_j$ to get
\[
|\pd_jJ| \lec \int_{B_1'} \frac{x_n}{|x-\xi'|^n}\, d\xi' \lec 1
\]
by  Lemma \ref{lemma2.1}. The case $i<n=j$ is the same. The case $i=j=n$ follows from $\div J=0$.
We conclude for $n\ge 2$,
\EQ{\label{0120a}
|\pd_jJ|\le \frac C{\bka{x}^{n}}, \quad (x\in \R^n_+,\ t\in\R).
}

We now consider
\[
\pd_jI = \int_0^t \int_\Si \pd_j \widetilde K_{in}(x-\xi',s) \phi_n(\xi',t-s)\, d\xi'ds
\]
and first consider $|x|\ge2$. If $i<n$, $j<n$, we use \eqref{Kijeq2}$_1$ and integrate by parts in $\pd_{x_i}=-\pd_{\xi_i}$ the term $\pd_{x_i}\pd_{x_j} C_n$ to get
\EQS{\label{def-pdjI}
\pd_jI &= -4
\int_0^t \int_{\Si}  \pd_{x_j}C_{n}(x-\xi',0,s)\, \pd_i \phi_n(\xi',t-s)\, d\xi' ds
\\
&\quad
-4 \int_0^t \int_{\Si}  \pd_i \pd_j \pd_n B(x-\xi',s)\, \phi_n(\xi',t-s)\, d\xi' ds
= \pd_jI_1 + \pd_jI_2.
}
Using estimate \eqref{Ci-est} for $\pd_{x_j}C_n$,
\EQ{\label{djI1.est1}
|\pd_jI_1| \lec \int_0^t \int_{B_1'} \frac1{(|x-\xi'|+\sqrt{s})^{n} \sqrt{s}}\, d\xi'ds \lec \int_0^t  \frac1{|x|^{n} \sqrt{s}}\, ds
\lec \frac{\sqrt{t}}{|x|^n}.
}

For $\pd_jI_2$ involving $\pd_nB$, we first substitute $\pd_nB(x,t) = -\frac{x_n}{2t} B(x,t)$ which follows from the definition \eqref{eq_def_B} of $B$. By estimate \eqref{eq_estB2} for $\pd_i\pd_jB$,
\[
|\pd_jI_2 |
\lec \int_0^t \int_{B_1'} \frac{x_n}{s}\, \frac{e^{-\frac{x_n^2}{10s}}}{(|x-\xi'|+\sqrt{s})^{n}\sqrt{s}}\, d\xi'ds.
\]
Thus,
\EQ{\label{djI2.est1}
|\pd_j I_2|
\lec \frac1{|x|^{n}} \int_0^t \frac{x_n}s\, \frac{e^{-\frac{x_n^2}{10s}}}{\sqrt{s}}\, ds
=   \frac1{|x|^{n}}  \int_0^{t/x_n^2} \frac{e^{-\frac{1}{10\tau}}}{\tau^{3/2}}\, d\tau
\lec  \frac1{|x|^{n}}.
}
It follows from \eqref{djI1.est1} and \eqref{djI2.est1} that for $i<n$ and $j<n$,
\EQ{\label{est-pdjI-<n<n}
|\pd_j I| \lec \frac1{|x|^{n}}.
}

If $|x|\ge2$ and $i=n$, $j<n$, we use \eqref{Kijeq2}$_2$ and integrate by parts in $\pd_{x_k}=-\pd_{\xi_k}$ the term $\pd_{x_j} \pd_{x_k} C_k$ to get
\EQ{\label{def-pdjI=n<n}
\pd_jI = 4
\int_0^t \int_{\Si}  \sum_{k<n} \pd_jC_{k}(x-\xi',0,s)\, \pd_k \phi_n(\xi',t-s)\, d\xi' ds.
}
Using estimate \eqref{Ci-est} for $\pd_{x_j}C_k$,
\EQ{\label{djI1.est2}
|\pd_jI| \lec \int_0^t \int_{B_1'} \frac1{(|x-\xi'|+\sqrt{s})^{n} \sqrt{s}}\, d\xi'ds \lec \frac{\sqrt{t}}{|x|^n}
}
for $i=n$, $j<n$.

If $|x|\ge2$ and $i<n$, $j=n$, we use \eqref{Kijeq2}$_1$ and integrate by parts in $\pd_{x_i}=-\pd_{\xi_i}$ the term $\pd_{x_i}\pd_{x_n} C_n$ to get
\EQS{\label{dnI.dec}
\pd_nI &= -4
\int_0^t \int_{\Si}  \pd_{x_n}C_{n}(x-\xi',0,s)\, \pd_i \phi_n(\xi',t-s)\, d\xi' ds
\\
& \quad -4 \int_0^t \int_{\Si}  \pd_i \pd_n^2 B(x-\xi',s)\, \phi_n(\xi',t-s)\, d\xi' ds= \pd_nI_1 + \pd_nI_2.
}
Using \eqref{eq0108b},
\EQ{\label{eq-def-dnI1}
\pd_nI_1 = 4 \int_0^t \int_{B_1'} \bkt{\sum_{k<n} \pd_{x_k}C_k(x-\xi',0,s) + \frac12\, \pd_n\Ga(x-\xi',s)} \pd_i\phi_n(\xi',t-s).
}
Using estimate \eqref{Ci-est} for $\pd_{x_j}C_k$,
\EQ{\label{est-pdnI1}
|\pd_nI_1|
\lec \int_0^t \int_{B_1'} \frac1{\sqrt{s}(|x-\xi'|+\sqrt{s})^n}+ \frac1{(|x-\xi'|+\sqrt{s})^{n+1}}\, d\xi'ds
\lec \frac{\sqrt t}{|x|^n}.
}
For $\pd_nI_2$, using $\pd_tB = \De B$, integration by parts, and $B(x,0)=0=\phi_n(\xi',0)$,
\EQS{\label{eq-def-dnI2}
\pd_nI_2
&= -4 \int_0^t \int_{B_1'} \pd_i \bke{-\sum_{k<n}\pd_k^2B(x-\xi',s) + \pd_sB(x-\xi',s)} \phi_n(\xi',t-s)\, d\xi'ds  \\
&= 4\int_0^t \int_{B_1'} \sum_{k<n} \pd_k^2B(x-\xi',s) \pd_i\phi_n(\xi',t-s)\, d\xi'ds\\
&\qquad - 4\int_0^t \int_{B_1'} \pd_iB(x-\xi',s) \pd_t\phi_n(\xi',t-s)\, d\xi'ds
=:I_{21} + I_{22} .
}
Using the estimate \eqref{eq_estB2},
\EQ{\label{est-I21}
|I_{21}| \lec  \int_0^t \int_{B_1'} \frac{e^{-\frac{x_n^2}{10s}}}{(|x-\xi'|+\sqrt{s})^n\sqrt{s}}\, d\xi'ds \lec  \frac{\sqrt t}{|x|^n}.
}

For $I_{22}$, we consider two cases: $t<1$ and $t>1$. If $t<1$, using the estimate \eqref{eq_estB2} and the definition of $\phi$ given in \eqref{boundary-data1} so that $|\pd_s \phi_n(\xi',s)| \le c g(\xi') (1-s)^{a-1}$
for $s\in (0,1)$,
\EQN{
|I_{22} |
&\lec \int_0^t \int_{B_1'} \frac{e^{-\frac{x_n^2}{10s}}}{(|x-\xi'|+\sqrt{s})^{n-1}\sqrt{s}} (1-t+s)^{a-1} \, d\xi'ds\\
&\lec \frac1{|x|^{n-1}} \int_0^t \frac{e^{-\frac{x_n^2}{10s}}}{\sqrt{s}(1-t+s)^{1-a}}\,ds
\lec \frac1{|x|^{n-1}} \int_0^t \frac{1}{\sqrt{s}(x_n^2+1-t+s)^{1-a}}\,ds,
}
where we have used $e^{-\frac{x_n^2}{10s}} \lec \min \big( 1, \big(\frac s{x_n^2+s}\big)^{1-a}\big)$ and
\[
\min  \bke{ 1, \frac s{x_n^2+s}}\cdot \frac1{1-t+s}
\le \min  \bke{ \frac1{1-t+s}, \frac 1{x_n^2+s}}\approx \frac 1{x_n^2+1-t+s}.
\]
By Lemma \ref{lemma2.1} with $L=1$, $d=1/2$, $a=x_n^2+|1-t|$ and $k=1-a$, we get
\EQN{
|I_{22} |
&\lec  \frac1{|x|^{n-1}}  \frac1{x_n+1} \bkt{\frac 1{(x_n^2+|1-t|)^{1/2-a}}
 +\de_{a=\frac12} \log \bke{2+ \frac 1{x_n^2+|1-t|}}}.
}

If $t>1$, using that $\phi(\xi',t-s)$ is supported in $t-1<s<t$ and $e^{-\frac{x_n^2}{10s}} \lec  \big(\frac s{x_n^2+s}\big)^{1/2}$,
\EQN{
|I_{22} |
&\lec \int_{t-1}^t \int_{B_1'} \frac{e^{-\frac{x_n^2}{10s}}}{(|x-\xi'|+\sqrt{s})^{n-1}\sqrt{s}} (1-t+s)^{a-1} \, d\xi'ds\\
& %
\lec \frac1{|x|^{n-1}} \int_{t-1}^t \frac{1}{\sqrt{x_n^2+s}(1-t+s)^{1-a}}\,ds
= \frac1{|x|^{n-1}} \int_0^1 \frac{1}{(x_n^2+t-1+ u)^{1/2}u^{1-a}}\,du.
}
By Lemma \ref{lemma2.1} with $L=1$, $d=a$, $a=x_n^2+|1-t|$ and $k=1/2$, we get
\EQN{
|I_{22} |
&\lec  \frac1{|x|^{n-1}}  \frac1{(x_n+1)^{2a}} \bkt{\frac 1{(x_n^2+|1-t|)^{1/2-a}}
 +\de_{a=\frac12} \log \bke{2+ \frac 1{x_n^2+|1-t|}}}.
}

Summarizing, for $0<a\le \frac 12$ and $t\in(0,2)$,
\EQ{\label{eq-CL0115-a}
|I_{22}| \lec \frac{\LN} {|x|^{n-1}(x_n+1)^{2a}} ,
}
where $\LN$ is as defined in \eqref{LN.def},
\[
\LN =  (x_n^2+|t-1|)^{a-\frac12} + \de_{a=\frac12} \log\bke{2+ \frac{1}{x_n^2+|t-1|}} .
\]

It follows from \eqref{est-pdnI1}, \eqref{est-I21} and \eqref{eq-CL0115-a} that for $i<n=j$ and $|x|\ge2$,
\EQ{\label{est_pdnI-<n=n}
|\pd_n I| \lec \frac1{|x|^n} + \frac{\LN}{|x|^{n-1}(x_n+1)^{2a}} .
}

For $i=j=n$, we use $\div I=0$. Hence $\pd_n I$ has the same estimate as \eqref{est-pdjI-<n<n}.

Combining \eqref{0120a}, \eqref{est-pdjI-<n<n}, \eqref{djI1.est2}, and \eqref{est_pdnI-<n=n}, we get that for $|x|\ge2$ and all $i,j$,
\EQ{\label{eq-est-nav->2}
|\pd_j \hat v_i | \lec \frac1{|x|^{n}} + \frac{\si \LN} {|x|^{n-1}(x_n+1)^{2a}} ,
}
where $\si = \de_{i<n} \de_{jn}$ as defined in \eqref{eq_estKij}. We also get \eqref{eq3.6} when $|x|\ge 2$.

\bigskip

Suppose now $|x|<2$. If $i<n$ and $j<n$, integrating by parts in $\pd_{x_j}=-\pd_{\xi_j}$ both terms $\pd_jI_1$ and $\pd_jI_2$ in \eqref{def-pdjI}, we obtain, using the same estimates for $\hat v$ itself leading to \eqref{I1.est} and \eqref{0115b},
\EQS{\label{eq-est-pdjI-<n<n}
|\pd_j I| &\lec
\abs{\int_0^t \int_{\Si}  C_{n}(x-\xi',0,s)\, \pd_i \pd_{j}\phi_n(\xi',t-s)\, d\xi' ds}
\\
&\quad +\abs{ \int_0^t \int_{\Si}  \pd_i  \pd_n B(x-\xi',s)\, \pd_{j} \phi_n(\xi',t-s)\, d\xi' ds} \lec 1.
}

If $i=j=n$, using $\div I=0$, $\pd_n I$ as the same estimate as \eqref{eq-est-pdjI-<n<n}.

If $i=n$ and $j<n$, integrating by parts in $\pd_{x_j}=-\pd_{\xi_j}$ the term $\pd_{x_j}C_k$ in \eqref{def-pdjI=n<n}, we get
\EQ{\label{est-pdjI-=n<n}
|\pd_jI| \lec   \int_0^t \int_{B_1'} \sum_{k<n}|C_k(x-\xi',0,s)|\, d\xi'ds \lec  1.
}

If $i<n$ and $j=n$, we recall $\pd_nI = \pd_nI_1 + \pd_n I_2$, where $\pd_nI_1$ and $\pd_nI_2$ are given in \eqref{eq-def-dnI1} and \eqref{eq-def-dnI2}, respectively. Integrating by parts in $\pd_{x_k}=-\pd_{\xi_k}$ the term $\pd_{x_k}C_k$ in \eqref{eq-def-dnI1}, we get
\EQN{
|\pd_nI_1|
&\lec   \int_0^t \int_{B_1'} |C_k(x-\xi',0,s)|\, d\xi'ds +   \int_0^t \int_{B_1'} |\pd_n\Ga(x-\xi',s)|\, d\xi'ds = I_{11}+I_{12}.
}
We have $I_{11} \lec 1$ as before. Using change of variables $z=\frac{\xi'}{\sqrt s}$ and $u=s/x_n^2$,
\EQN{
I_{12}
&\lec       \int_0^t \int_{B_1'} \frac{x_n}s\, \frac{e^{-\frac{|x-\xi'|^2}{4s}}}{s^{\frac{n}2}}\, d\xi'ds
\lec       \int_0^\infty \int_{\R^{n-1}} \frac{x_n}se^{-\frac{x_n^2}{4s}} \, \frac{e^{-\frac{|\xi'|^2}{4s}}}{s^{\frac{n}2}}\, d\xi'ds \\
&= \int_0^t\frac{x_n}{s^{3/2}}e^{-\frac{x_n^2}{4s}}\,ds\,\int_{\R^{n-1}}e^{-z^2/4}dz
= \int_0^\infty\frac{C}{u^{3/2}}e^{-\frac{1}{4u}} du = C.
}

For $\pd_nI_2$, we recall that $\pd_nI_2=I_{21} + I_{22}$, where $I_{21}$ and $I_{22}$ are defined in \eqref{eq-def-dnI2}. Integrating by parts in $\pd_{x_k}=-\pd_{\xi_k}$ (exactly once) the term $\pd_k^2B$ in $I_{21}$, we derive
\EQN{
|I_{21} | &\lec   \int_0^t \int_{B_1'}\sum_{k<n}|\pd_kB(x-\xi',s)|\, d\xi'ds
\lec   \int_0^t \int_{B_1'} \frac{e^{-\frac{x_n^2}{10s}}}{(|x-\xi'|+\sqrt{s})^{n-1}\sqrt{s}}\, d\xi'ds \\
&\lec   \int_0^t \log\bke{2+\frac1{x_n+\sqrt{s}}} \frac1{\sqrt{s}}\, ds
\lec   \int_0^{\sqrt 2} \log\bke{2+\frac1u} du \lec  1.
}

For $I_{22}$, by the same computation as for \eqref{eq-CL0115-a}, we obtain%
\EQN{
|I_{22} |
&\lec \int_{(t-1)_+} ^t \int_{B_1'} \frac{e^{-\frac{x_n^2}{10s}}}{(|x-\xi'|+\sqrt{s})^{n-1}\sqrt{s}} (1-t+s)^{a-1} \, d\xi'ds\\
&\lec \int_{(t-1)_+} ^t \int_{B_3'} \frac{1}{(|\xi'|+x_n+\sqrt{s})^{n-1}}d\xi'\, s^{-1/2} e^{-\frac{x_n^2}{10s}} (1-t+s)^{a-1} \, ds\\
&\lec \int_{(t-1)_+} ^t \log \bke{2 + \frac 1{x_n+\sqrt{s}}}\, s^{-1/2} e^{-\frac{x_n^2}{20s}} (1-t+s)^{a-1} \, ds.
}
If we bound
$
\log \bke{2 + \frac 1{x_n+\sqrt{s}}} \le \log \bke{2 + \frac 1{x_n}}
$
then the same estimate for \eqref{eq-CL0115-a} goes through and gives
\EQ{\label{0224b}
|I_{22} |\lec \log \bke{2 + \frac 1{x_n}} \LN.
}
We now try to improve the above estimate.

By integration by parts,
\EQ{\label{0224c}
| I_{22} |
\lec \int_0^t \int_{B_1'} |B(x-\xi',s)| |\pd_i\pd_t\phi_n(\xi',t-s)| \,d\xi'ds .
}
For $n\ge3$, noting $\norm{(1-s)^{1-a}\nb_{\xi'}\pd_s \phi_n(\xi',s)}_\infty\lec N_2\le 1 $,
\EQS{\label{I22-local-n3}
| I_{22} |
&\lec \int_{(t-1)_+}^t \int_{B_1'} \frac{e^{-\frac{x_n^2}{10s}}}{\bke{|x-\xi'|+\sqrt{s}}^{n-2} \sqrt{s}}\, (1-t+s)^{a-1}\, d\xi'ds\\
&\lec \int_{(t-1)_+}^t e^{-\frac{x_n^2}{10s}}\frac{1}{ \sqrt{s}}\, (1-t+s)^{a-1}\, ds
\quad
\lec \LN,
}
by the same estimates leading to \eqref{eq-CL0115-a}.

For $n=2$, if $x_2^2>|t-1|$, then by \eqref{0224b},
\[
| I_{22} | \lec \log\bke{2+\frac1{x_2}} \LN \lec \log\bke{2+\frac1{x_2^2+|t-1|}} \LN.
\]
We now consider $n=2$ and $x_2^2<|t-1|$.
Using \eqref{0224c} and Remark \ref{remark_est_AB},
\EQN{
| I_{22} |
&\lec \int_{(t-1)_+}^t \int_{B_3'} \frac{1+|\log(x_2+\sqrt{s})|+|\log(|\xi_1|+x_2+\sqrt{s})|}{\sqrt{s}}\, (1-t+s)^{a-1}\, d\xi_1ds.
}
Note that
\[
1+ |\log y| \lec \log \frac{20}y, \quad 0<y<10.
\]
Thus
\EQ{\label{0224d}
| I_{22} |
\lec \int_{(t-1)_+}^t\frac{\log \frac{20}{x_2+\sqrt{s}}}{\sqrt{s}}\, (1-t+s)^{a-1}\, ds.
}

If $0<t<1$, by changing variables $s=4u^2$ and using \eqref{0223a} of Lemma \ref{th2.3} with $m=2-2a$,
\EQN{
| I_{22} |
&\lec \int_0^t \frac{\log \frac{4}s}{\sqrt{s}(1-t+s)^{1-a}}\, ds
\lec \int_0^{1/2} \frac{-\log u}{(\sqrt{1-t}+u)^{2-2a}}\, du
\\
&\approx \int_0^{1/2} \frac{-\log u}{(x_2+\sqrt{1-t}+u)^{2-2a}}\, du
\\
&
\lec 1 + (x_2+\sqrt{1-t})^{2a-1} + \de_{a=\frac12} \bkt{ \log (x_2+\sqrt{1-t})}^{2}.
}

If $1<t<2$, then $\log \frac{20}{x_2+\sqrt{s}}\le A=\log \frac{20}{x_2+\sqrt{t-1}}$ in \eqref{0224d} and
\EQN{
| I_{22} |
&\lec \int_{t-1}^t \frac{A}{\sqrt{s}(1-t+s)^{1-a}}\, ds
 = \int_0^1 \frac{A}{\sqrt{u+t-1} u^{1-a}}\, du
 \approx \int_0^1 \frac{A}{\sqrt{u+x_n^2+t-1} u^{1-a}}\, du.
}
By Lemma \ref{lemma2.1} with $L=1$, $d=a$, $a=x_n^2+|1-t|$ and $k=1/2$, we get
\EQN{
|I_{22} |
&\lec \log \frac{20}{x_2+\sqrt{t-1}} \bkt{\frac 1{(x_n^2+|1-t|)^{1/2-a}}
 +\de_{a=\frac12} \log \bke{2+ \frac 1{x_n^2+|1-t|}}}.
}

Combining all cases, we have shown for $n=2$ and $i<2=j$,
\EQ{\label{eq-est-pdnI1}
|I_{22}|\lec \log\bke{2+\frac1{x_2^2+|t-1|}} \LN, \quad \forall |x|<2, \quad \forall |t-1|<1.
}
and the same estimate for $\pd_nI_1$.

Therefore, \eqref{0120a}, \eqref{eq-est-pdjI-<n<n}, \eqref{est-pdjI-=n<n}, and \eqref{eq-est-pdnI1} imply that for $|x|<2$
\EQ{\label{eq-est-nav-<2}
|\pd_j \hat v_i| \lec 1 + \si \LN \bkt{1+  \de_{n2} \log\bke{2+ \frac{1}{x_2^2+|t-1|}} },
}
where $\si = \de_{i<n} \de_{jn}$.
The gradient estimate \eqref{th31eq2} then follows from the outer estimate \eqref{eq-est-nav->2} and the inner estimate \eqref{eq-est-nav-<2}. We also get \eqref{eq3.6} when $|x|< 2$.
\end{proof}

\medskip

We will need the following lemma for the proof of Theorem \ref{th1.1}. It extends Lemma 1 and Lemma 2 of \cite{Kang2005}, which treat the case $n=3$, to general dimensions.

\begin{lem}\label{th3.2}
Let $n \ge 3$ and \
$\displaystyle
K(x', t)=\int_{\Si} \frac{e^{-\frac{|x'-z'|^2}{4t}}}{\abs{z'}^{n-2}} dz'$\quad for $x'\in\Si= \R^{n-1}$.
For any $m\in [2,\infty)$,
we have
\begin{align}\label{estimate-K1}
K(x',s)&\ge
\bke{\frac{m}{(m+1){d}}}^{n-2}s^{\frac{n-1}{2}} (4\pi)^{\frac{n-1}2} \bke{1- 2^{\frac{n-1}4} e^{-\frac{{d}^2}{8m^2s}}},
\\
\label{estimate-K2}
K(x',s)&\le
\bke{\frac{m}{(m-1){d}}}^{n-2}s^{\frac{n-1}{2}} (4\pi)^{\frac{n-1}2} + \frac{C}{d^{n-2}} s^{\frac{n-1}{2}} e^{-\frac{{d}^2}{8m^2s}}.
\end{align}
where ${d}=\abs{x'}>0$, and $C=C(n)>0$ is some absolute constant.
\end{lem}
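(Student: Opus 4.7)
The plan is to decompose the integration domain based on distance from $x'$. Setting $w=z'-x'$ and $R=d/m$, write $K(x',s)=I_{\mathrm{in}}+I_{\mathrm{out}}$ with $I_{\mathrm{in}}$ the integral over $\{|w|<R\}$ and $I_{\mathrm{out}}$ over $\{|w|\ge R\}$. On the near region, the triangle inequality yields $\tfrac{m-1}{m}d\le|z'|\le\tfrac{m+1}{m}d$, so the singular weight $|z'|^{-(n-2)}$ is essentially constant and can be pulled out of the integral:
\[
\bke{\tfrac{m}{(m+1)d}}^{n-2}J\;\le\;I_{\mathrm{in}}\;\le\;\bke{\tfrac{m}{(m-1)d}}^{n-2}J,\qquad J:=\int_{|w|<R}e^{-|w|^2/(4s)}\,dw.
\]
Then write $J=(4\pi s)^{(n-1)/2}-T$, where $T:=\int_{|w|\ge R}e^{-|w|^2/(4s)}\,dw$ is the Gaussian tail. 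The split $|w|^2/(4s)=|w|^2/(8s)+|w|^2/(8s)$ together with $|w|^2/(8s)\ge R^2/(8s)$ on the tail gives $T\le e^{-R^2/(8s)}(8\pi s)^{(n-1)/2}$, so $T/(4\pi s)^{(n-1)/2}$ is controlled by a constant multiple of $e^{-d^2/(8m^2s)}$.

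For the lower bound \eqref{estimate-K1}, drop $I_{\mathrm{out}}\ge 0$ and combine the preceding bounds. For the upper bound \eqref{estimate-K2}, estimate $I_{\mathrm{in}}\le\bke{\tfrac{m}{(m-1)d}}^{n-2}(4\pi s)^{(n-1)/2}$ by discarding the subtraction $-T$, and apply the same exponential split in $I_{\mathrm{out}}$ to obtain
\[
I_{\mathrm{out}}\;\le\;e^{-d^2/(8m^2s)}\int_{\R^{n-1}}\frac{e^{-|z'-x'|^2/(8s)}}{|z'|^{n-2}}\,dz'.
\]
Control the remaining integral by $Cs^{(n-1)/2}/d^{n-2}$ via a second splitting at $|z'|=d/2$: on $\{|z'|\ge d/2\}$ use $|z'|^{-(n-2)}\le(2/d)^{n-2}$ and integrate the Gaussian freely over $\R^{n-1}$, while on $\{|z'|<d/2\}$ the enforced distance $|z'-x'|\ge d/2$ produces the decay factor $e^{-d^2/(32s)}$, which dominates the radial integral $\int_{|z'|<d/2}|z'|^{-(n-2)}\,dz'\lec d$ since $u^{(n-1)/2}e^{-u}$ is bounded for $u=d^2/(32s)$.

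The main obstacle is the upper estimate on $I_{\mathrm{out}}$: the singularity of $|z'|^{-(n-2)}$ at $z'=0$ sits inside the far region $\{|w|\ge R\}$, where no uniform lower bound on $|z'|$ is available. The secondary cutoff at $|z'|=d/2$ resolves this by pairing the near-origin piece with an extra Gaussian decay coming from $|z'-x'|\ge d/2$. The hypothesis $m\ge 2$ ensures $(m-1)/m\ge 1/2$, so the near-region bounds on $|z'|$ stay cleanly away from zero and align with the $d/2$ cutoff; the precise constants $2^{(n-1)/4}$ and $C$ in the statement then emerge from tracking and optimizing these splits (the natural $|w|^2/(8s)+|w|^2/(8s)$ decomposition yields the slightly larger constant $2^{(n-1)/2}$, and sharpening it to $2^{(n-1)/4}$ amounts to a more careful choice of the tail exponent splitting).
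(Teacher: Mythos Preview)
Your approach is essentially the same as the paper's: both use the near/far decomposition around $x'$ at radius $d/m$, obtain the lower bound by dropping the far piece and bounding the Gaussian tail, and handle the singularity at $z'=0$ in the upper bound via a secondary cutoff at $|z'|=d/2$. The only organizational difference is that the paper carves out a third region $\mathcal{C}=\{|z'|\ge (m-1)d/m\}\setminus\{|z'-x'|<d/m\}$ and groups it with the main term (since $|z'|^{-(n-2)}\le\big(\tfrac{m}{(m-1)d}\big)^{n-2}$ there), whereas you leave $\mathcal{C}$ inside $I_{\mathrm{out}}$ and absorb it into the error via the exponential factor $e^{-d^2/(8m^2s)}$ coming from $|z'-x'|\ge d/m$; both choices give the stated bound.

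One small remark on constants: your parenthetical is right that the natural $\tfrac12+\tfrac12$ split of the Gaussian exponent produces $2^{(n-1)/2}$ rather than $2^{(n-1)/4}$, and in fact the paper's derivation of $2^{(n-1)/4}$ contains a slip (it identifies $\int e^{-|w'|^2/8}\,dw'$ with the heat kernel at time $\sqrt 2$ rather than time $2$). Since the lemma is only used with the error terms absorbed into lower-order remainders, the exact constant is immaterial, but you should not expect to reach $2^{(n-1)/4}$ with the exponent $e^{-d^2/(8m^2s)}$ by any exponent splitting.
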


\emph{Remark.}\quad For the application to the proof of Theorem \ref{th1.1}, it is crucial  that the two leading constants $(\frac{m}{m+1})^{n-2} (4\pi)^{\frac{n-1}2}$ and $(\frac{m}{m-1})^{n-2} (4\pi)^{\frac{n-1}2}$ are close for large $m$. In particular, we cannot absorb the second terms in \eqref{estimate-K1} and \eqref{estimate-K2} to the first terms.

\begin{proof}
Due to the scaling property $K(\mu x', \mu^2 s)=\mu K(x', s)$ and that of \eqref{estimate-K1} and \eqref{estimate-K2}, it suffices to compute for the case $s=1$.
We split $\Si$ into three disjoint regions $\mathcal A$, $\mathcal B$ and $\mathcal C$ defined by
\[
\mathcal A=\bket{z'\in\Si: \abs{x'-z'}<\frac{{d}}{m}},
\quad
\mathcal B=\bket{z'\in\Si: \abs{z'}<\frac{(m-1)d}{m}},
\]
and
$ \mathcal C=\Si\setminus (\mathcal A \cup \mathcal B)$,
and divide the integral as
\[
K(x',1)=\int_{\Si} \frac{e^{-\frac{|x'-z'|^2}{4}}}{\abs{z'}^{n-2}} dz'=\int_{\mathcal A}\cdots+\int_{\mathcal B}\cdots+\int_{\mathcal C}\cdots:=K_{\mathcal A}+K_{\mathcal B}+K_{\mathcal C}.
\]

Since $\abs{z'}<(1-1/m){d}$ in $\mathcal B$, we have $|x'-z'|\ge \frac dm$ and, with $m \ge 2$,
\EQN{
K_{\mathcal B} &= \bke{ \int_{|z'|< \frac d2}+  \int_{ \frac d2<|z'|<d-\frac dm}} \frac{e^{-\frac{|x'-z'|^2}{4}}}{\abs{z'}^{n-2}} dz' \\
&
\le e^{-\frac{{d}^2}{16}}  \int_{|z'|< \frac d2} \frac{dz'}{\abs{z'}^{n-2}}
+ (d/2)^{2-n} \int _{ \frac d2<|z'|<d-\frac dm} e^{-\frac{{d}^2}{8m^2}} e^{-\frac{|x'-z'|^2}{8}} dz'
\\
&\le C d e^{-\frac{{d}^2}{16}}  +C d^{2-n}e^{-\frac{{d}^2}{8m^2}}
\\
&\le C d^{2-n}e^{-\frac{{d}^2}{8m^2}}.
}

Since $(1-1/m){d}\le \abs{z'}$ in both $\mathcal A$ and   $\mathcal C$, we have
\EQN{
K_{\mathcal A}+K_{\mathcal C} &\le \bke{\frac{m}{(m-1){d}}}^{n-2}\int_{\mathcal A\cup \mathcal C}
e^{-\frac{|x'-z'|^2}{4}}dz'
\le \bke{\frac{m}{(m-1){d}}}^{n-2}\int_{\Si} e^{-\frac{|w'|^2}{4}}dw',%
}
and
\[
 \int_\Si e^{-|w'|^2/4} dw' =
\int_\Si (4\pi)^{\frac{n-1}2} \Ga'(w',1) \,dw' = (4\pi )^{\frac{n-1}2} .
\]
Above $\Ga'(w',t)$ is the heat kernel on $\Si \times (0,\infty)$.
Summing up the above estimates, we obtain the upper bound of $K$ in \eqref{estimate-K2}.

For the lower bound \eqref{estimate-K1}, we note that $ \abs{z'}\le (1+1/m){d}$ in $\mathcal A$. Thus
\[
K(x',1) \ge K_{\mathcal A}\ge \bke{\frac{m}{(m+1){d}}}^{n-2}\int_{|w'|<\frac dm} e^{-\frac{|w'|^2}{4}}dw',
\]
while
\EQN{
\int_{|w'|<\frac{{d}}{m}} e^{-\frac{|w'|^2}{4}}dw'
&= \int_{\Si} e^{-\frac{|w'|^2}{4}}dw' - \int_{|w'|>\frac{{d}}{m}} e^{-\frac{|w'|^2}{4}}dw'
\\
&\ge   \int_{\Si} e^{-\frac{|w'|^2}{4}}dw' -  e^{-\frac{{d}^2}{8m^2}} \int_{\Si} e^{-\frac{|w'|^2}{8}}dw'
\\
&=  (4\pi)^{\frac{n-1}2} - e^{-\frac{{d}^2}{8m^2}}  (4\pi \sqrt 2)^{\frac{n-1}2},
}
using
\[
 \int_\Si e^{-|w'|^2/8} dw' =
\int_\Si (4\pi \sqrt 2)^{\frac{n-1}2} \Ga'(w',\sqrt 2) \,dw' = (4\pi \sqrt 2)^{\frac{n-1}2} .
\]
This shows  \eqref{estimate-K1}.
\end{proof}

We are now ready to prove Theorem \ref{th1.1}.
\begin{proof}[Proof of Theorem \ref{th1.1}]

\mbox{}\medskip

{\bf Step 1}.\quad
H\"older continuity.

Note that the boundary value $\phi \in C^{2a,a}(\R^{n-1}\times [0,2])$. Let $b=a$ if $0<a<1/2$, and $b$ can be any number in $(0,1/2)$ when $a=1/2$. We claim that, since $\phi(\xi',s) = g(\xi') \,h(s)\, e_n$ and $g\in C^1_c(B_1')$,
\EQ{\label{0507}
R' \phi  \in C^{2b,b}(\R^{n-1}\times [0,2]),
}
where $R'=(R'_1,\ldots,R'_{n-1})$ is $(n-1)$-dimensional Riesz transforms on $\R^{n-1}$. (If $n=2$, $R'$ is the Hilbert transform on $\R$.)
Indeed, it is known that homogeneous H\"older continuity is preserved by the Riesz transforms, i.e. $R' g \in \dot{C}^{2b}(\R^{n-1})$ (see e.g.~Chang-Jin \cite[Proposition 4.1]{MR3398794} and related reference therein) if $0<2b<1$.
Being locally H\"older continuous, $R'g$ is locally bounded.
If $\abs{\xi'}>2$, then
\[
\abs{R_i g(\xi')}=\abs{\int_{\R^{n-1}}c\frac{\xi'_i-y'_i}{|\xi'-y'|^{n}} g(y')dy'}\lec
\int_{B_1'}\frac{1}{|\xi'|^{n-1}} \abs{g(y')}dy'\lesssim\frac{\norm{g}_{L^{1}}}{|\xi'|^{n-1}} .
\]
The above shows \eqref{0507}.
By \cite[Theorem 1.2]{MR3398794}, we have $\hat v\in C^{2b,b}(\overline{\R^n_+}\times [0,2])$.

\medskip

{\bf Step 2}.\quad
Finite global energy in $\R^n_+\times (0,2)$.\smallskip

We first show that $\hat v(t) \in L^2_x (\R^n_+)$, $n\ge 3$ for $t\in [0,2]$.
With the aid of the estimate \eqref{th31eq1} in Proposition \ref{th3.1}, we have
\[
\int_{\R^n_+} |\hat v(x,t)|^2\,dx \le C\int_{\R^n_+} \frac{N^2_1}{\bka{x}^{2(n-1)}} \, dx \lesssim \int_0^{\infty}\frac{1}{(\rho^2+1)^{\frac{n-1}{2}}}d\rho<\infty
\]
for $n\ge 3$. The last integral diverges if $n=2$.
On the other hand, by the estimate \eqref{th31eq2} of the gradient of $\hat v$ for $n\ge 3$,
\begin{equation}\label{grad-v-thm1}
| \pd_{x_j} \hat v_i(x,t) | \le CN_2 \bigg[ \frac{1}{\bka{x}^{n}}
 + \frac{\si \LN } {\bka{x} ^{n-1}(x^2_n+1)^{a}} \bigg], \qquad \si = \de_{i<n} \de_{jn},
\end{equation}
where
\[
\LN =  (x_n^2+|t-1|)^{a-\frac12} + \de_{a=\frac12} \log\bke{2+ \frac{1}{x_n^2+|t-1|}} .
\]
It is clear that $1/{\bka{x}^{n}}$ is square integrable in $\R^n_+$, uniformly in $t$, and thus it suffices to estimate the second term of the right-hand side in \eqref{grad-v-thm1},  in the case $\sigma=1$.
Firstly, we consider the case that $a<1/2$. Using $ x_n^2+|t-1|\le x_n^2+1$, we obtain
\EQS{
&\int_0^2\int_{\R^n_+} \frac{(x_n^2+|t-1|)^{2a-1}} {\bka{x} ^{2(n-1)}(x^2_n+1)^{2a}}\,dx dt
\lesssim
\int_0^2\int_{\R_+} \frac{1} {(x_n^2+|t-1|)}\, dx_n dt
\\
&
\lesssim
\int_0^2\frac{1} {|t-1|^{\frac{1}{2}}}\, dt<\infty.
}

In the case $a=1/2$, we have
\EQ{
\int_0^2\int_{\R^n_+} \frac{\log\bke{2+ \frac{1}{x_n^2+|t-1|}}^2} {\bka{x} ^{2(n-1)}(x^2_n+1)}\,dx dt
\lesssim \int_0^2\int_{\R_+}\frac{\log\bke{2+ \frac{1}{|t-1|}}^2} {(x^2_n+1)}\, dx_n dt<\infty.
}

Collecting the above estimates, we get \eqref{th1.1eq1}, i.e., the solution is of finite energy when $n \ge 3$.

\medskip

{\bf Step 3}.\quad Pressure estimate.

Let $n\ge2$. We now derive the pointwise estimate \eqref{est-p-hat} for the pressure $\hat p$. From \eqref{hatp.formula}, we have
\EQN{
\hat p(x,t)=&~2\pd_n^2\int_{\Si}E(x-\xi')\phi_n(\xi',t)\,d\xi'
+2\int_{\Si}E(x-\xi')\pd_t\phi_n(\xi',t)\,d\xi'\\
&-4(\pd_t-\De_{x'})\int_{-\infty}^\infty\int_{\Si}\pd_nA(x-\xi',t-\tau)\phi_n(\xi',\tau)\,d\xi'd\tau\\&
=:I + II + III.
}
Since
\[
I = 2\sum_{i=1}^{n-1} \int_{\Si}E(x-\xi')\pd_i^2\phi(\xi',t)\, d\xi',
\]
\[
| I | \lec h(t) \sum_{i=1}^n \int_{B_1'}|E(x-\xi')|\, d\xi'.
\]
For $|x|<2$,
\[
\int_{B_1'}|E(x-\xi')|\, d\xi' \lec 1.
\]
For $|x|\ge2$, then $|x-\xi'|\ge |x|-|\xi'|\ge1$ so that $\log|x-\xi'|>0$ for $|\xi'|\le1$, and
\EQN{
\int_{B_1'}|E(x-\xi')|\, d\xi' \lec
\begin{cases}
\int_{B_1'}\frac1{|x-\xi'|^{n-2}}\, d\xi' \lec \frac1{|x|^{n-2}}&\quad n\ge3,\\
\int_{B_1'} \log|x-\xi'|\, d\xi' \lec \log(|x|+1)&\quad n=2.
\end{cases}
}
Thus,
\[
| I | \lec h(t) \bke{\one_{n\ge3}\, \frac1{\bka{x}^{n-2}} + \one_{n=2} \log(|x|+1)}.
\]
For $II$, from the same computation as for $I$ we have
\EQS{\label{est-p-hat-II}
| II | &\lec |h'(t)| \bke{\one_{n\ge3}\, \frac1{\bka{x}^{n-2}} + \one_{n=2} (1+\log(|x|+1))}\\
&\lec \frac{\chi_{\frac14\le t\le1}(t) }{(1-t)^{1-a}} \bke{\one_{n\ge3}\, \frac1{\bka{x}^{n-2}} + \one_{n=2} \log(|x|+2)}.
}
Note that the estimate of $I$ is dominated by that of $II$.

For $III$, we consider the three cases: $t<\frac14$, $t\in[\frac14,1)$ and $t\ge1$.

If $t<\frac14$, since $\supp h \subset [\frac14,1]$ and $A(x,t)=0$ for $t<0$, we have $t-\tau<0$ for $t<\frac14$ and $\tau\ge\frac14$. So
\EQN{
\int_{-\infty}^\infty\int_{\Si}\pd_nA(x-\xi',t-\tau)\phi_n(\xi',\tau)\,d\xi'd\tau
&=\int_{-\infty}^\infty\int_{\Si}\pd_nA(x-\xi',t-\tau)g(\xi')h(\tau)\,d\xi'd\tau \\
&= \int_{\frac14}^\infty\int_{\Si}\pd_nA(x-\xi',t-\tau)g(\xi')h(\tau)\,d\xi'd\tau
=0,
}
and thus $III=0$.

If $t\in[\frac14,1)$, integrating by parts we have
\EQN{
| III | &\lec \int_{-\infty}^\infty \int_{\Si} |\pd_nA(x-\xi',t-\tau)| \bkt{|\De_{\xi'} g(\xi')| h(\tau) + |g(\xi')| |h'(\tau)|} d\xi'd\tau\\
& \lec \int_{\frac14}^t \int_{B_1'} \frac1{\sqrt{t-\tau}(|x-\xi'|+\sqrt{t-\tau})^{n-1}}\, h(\tau)\, d\xi'd\tau\\
&\quad + \int_{\frac14}^t \int_{B_1'} \frac1{\sqrt{t-\tau}(|x-\xi'|+\sqrt{t-\tau})^{n-1}}\, |h'(\tau)|\, d\xi'd\tau
=: III_1 + III_2.
}
Since
\EQN{
&\int_{B_1'} \frac1{(|x-\xi'|+\sqrt{t-\tau})^{n-1}}\, d\xi'
\\
&\lec
\begin{cases}
\int_0^{|x|+1} \frac{r^{n-2}}{(t+\sqrt{t-\tau})^{n-1}}\, d\xi' \lec \frac{|x|+1}{|x|+\sqrt{t-\tau}+1} \bke{1+\log_+\frac{|x|+1}{\sqrt{t-\tau}}}&\qquad |x|<2,\\
\frac1{(|x|+\sqrt{t-\tau})^{n-1}}&\qquad |x|\ge2,
\end{cases}\\
&\lec \frac{\log\bke{2+\frac1{\sqrt{t-\tau}}}}{(|x|+\sqrt{t-\tau}+1)^{n-1}},
}
we have, using the change of variables $u=\sqrt{t-\tau}$,
\EQN{
| III_1| &\lec \int_{\frac14}^t \frac1{\sqrt{t-\tau}}\, \frac{\log\bke{2+\frac1{\sqrt{t-\tau}}}}{(|x|+\sqrt{t-\tau}+1)^{n-1}}\, (1-\tau)^a\, d\tau\\
& \lec \int_0^{\sqrt{t-\frac14}} \frac{\log\bke{2+\frac1u}}{(|x|+u+1)^{n-1}}\, du
\lec \frac1{\bka{x}^{n-1}},
}
and
\EQN{
| III_2 | &\lec \int_{\frac14}^t \frac1{\sqrt{t-\tau}}\, \frac{\log\bke{2+\frac1{\sqrt{t-\tau}}}}{(|x|+\sqrt{t-\tau}+1)^{n-1}}\, \frac1{(1-\tau)^{1-a}}\, d\tau\\
&\lec \frac1{\bka{x}^{n-1}} \int_0^{\sqrt{t-\frac14}} \log\bke{2+\frac1u}\, \frac1{(u^2+1-t)^{1-a}}\, du
\lec \frac1{(1-t)^{1-a}\bka{x}^{n-1}}.
}
Therefore, for $t\in[\frac14,1)$
\EQ{\label{est-CL0615-a}
| III | \lec \frac1{(1-t)^{1-a}\bka{x}^{n-1}}.
}

If $t\ge1$, integrating by parts we have
\EQN{
| III | &\lec \int_{-\infty}^\infty \int_{\Si} |\pd_nA(x-\xi',t-\tau)| \bkt{|\De_{\xi'} g(\xi')| h(\tau) + |g(\xi')| |h'(\tau)|} d\xi'd\tau\\
& \lec \int_{\frac14}^1 \int_{B_1'} \frac1{\sqrt{t-\tau}(|x-\xi'|+\sqrt{t-\tau})^{n-1}}\, h(\tau)\, d\xi'd\tau\\
&\quad + \int_{\frac14}^1 \int_{B_1'} \frac1{\sqrt{t-\tau}(|x-\xi'|+\sqrt{t-\tau})^{n-1}}\, |h'(\tau)|\, d\xi'd\tau
=: III_1 + III_2,
}
where, using the change of variables $u=\sqrt{t-\tau}$,
\EQN{
| III_1 | &\lec \int_{\frac14}^1 \frac1{\sqrt{t-\tau}}\, \frac{\log\bke{2+\frac1{\sqrt{t-\tau}}}}{(|x|+\sqrt{t-\tau}+1)^{n-1}}\, (1-\tau)^a\, d\tau\\
& \lec \int_{\sqrt{t-1}}^{\sqrt{t-\frac14}} \frac{\log\bke{2+\frac1u}}{(|x|+u+1)^{n-1}}\, du
\lec \frac1{\bka{x}^{n-1}},
}
and
\EQN{
| III_2 | &\lec \int_{\frac14}^1 \frac1{\sqrt{t-\tau}}\, \frac{\log\bke{2+\frac1{\sqrt{t-\tau}}}}{(|x|+\sqrt{t-\tau}+1)^{n-1}}\, \frac1{(1-\tau)^{1-a}}\, d\tau\\
&\lec \frac1{\bka{x}^{n-1}} \int_{t-1}^{t-\frac14} \frac1{\sqrt{s}}\, \log\bke{2+\frac1{\sqrt{s}}} \frac1{(1-t+s)^{1-a}}\, ds, \qquad s=t-\tau,\\
&\lec \frac1{\bka{x}^{n-1}} \int_{t-1}^{t-\frac14} \frac1{s}\, \frac1{(1-t+s)^{1-a}}\, ds\\
&= \frac1{\bka{x}^{n-1}} \int_0^{\frac34} \frac1{(u+t-1)u^{1-a}}\, du,\qquad u=1-t+s,\\
&\lec \frac1{\bka{x}^{n-1}}\, \frac{(3/4)^a}{(t-1/4)^a}\, \frac1{(t-1)^{1-a}}
\sim \frac1{(t-1)^{1-a}\bka{x}^{n-1}}.
}
Therefore, for $t\ge 1$
\EQ{\label{est-CL0615-b}
| III | \lec \frac1{(t-1)^{1-a}\bka{x}^{n-1}}.
}
Combining \eqref{est-CL0615-a} and \eqref{est-CL0615-b}, for all $t$
\EQ{\label{est-p-hat-III}
| III | \lec \frac{\chi_{t\ge\frac14}(t)}{|t-1|^{1-a}\bka{x}^{n-1}}.
}
The pointwise estimate \eqref{est-p-hat} follows from \eqref{est-p-hat-II} and \eqref{est-p-hat-III}.

\medskip

{\bf Step 4}.\quad
Lower bound of velocity gradient: Set up.\smallskip

By \eqref{eq3.6} of Remark \ref{rem3.2}, we have $|\pd_j \hat v_i(x,1) - \si \pd_n I_2(x,1)| \le C \bka{x}^{-n}$ where $\si = \de_{i<n} \de_{jn}$.  In particular, $|\pd_j \hat v_i(x,1)|$ may blow up as $x_n \to 0_+$ only if $\si=1$, i.e., $i<n=j$.
We will show that $|\pd_n \hat v_i(x,1)|$ is indeed unbounded when $i<n$.
It suffices to estimate $\pd_n I_2 (x,1)$ given in \eqref{dnI.dec}. After integration by parts,
\EQ{\label{eq3.43}
\pd_n I_2 (x,1)= -4 \int_{0}^1 \int_{\Si}   \pd_n^2 B(x-\xi',s)\, \pd_i\phi_n(\xi',1-s)\, d\xi' ds, \quad
\text{for }i<n.
}
Recall $K(x', t)$ defined in Lemma \ref{th3.2},
\[
K(x', t):=\int_{\Si} \frac{e^{-\frac{|x'-z'|^2}{4t}}}{\abs{z'}^{n-2}} dz',\qquad x'\in\R^{n-1}.
\]
By the definition \eqref{eq_def_B} of $B$, it follows that
\EQN{
4B(x-\xi',s)&=e^{-\frac{x_n^2}{4s}}
\frac{C_n}{s^{\frac{n}{2}}}\,K(x'-\xi',s),
\\
4\pd_n^2 B(x-\xi',s)&=e^{-\frac{x_n^2}{4s}}\bke{-\frac12+\frac{x_n^2}{4 s}}
\frac{C_n}{s^{\frac{n+2}{2}}}\,K(x'-\xi',s),
}
where $C_n=4(4\pi)^{-n/2} [n(n-2)|B_1|]^{-1}>0$.
By our assumption \eqref{boundary-data1}--\eqref{boundary-data3} on the boundary value, we have
\EQ{\label{0306a}
\pd_i\phi_n(\xi',1-s)= \pd_i g(\xi')\, s^a, \quad (0\le a\le\tfrac12).
}
Therefore we have
\EQ{\label{eq3.45}
\pd_n I_2 (x,1)=C_n\int_{0}^1 \int_{\Si}   e^{-\frac{x_n^2}{4s}}\bke{\frac{1}{2}-\frac{x_n^2}{4 s}}
\frac{1}{s^{\frac{n+2}{2}}}\, K(x'-\xi',s)\, \pd_ig(\xi') s^a \, d\xi' ds.
}

To get a lower bound of $\pd_n I_2$, we choose $g$ in a product form as in \eqref{gxiform}.

For fixed $i$,
denote the reflection $ x'_* =  (x_1, \ldots , -x_i, \ldots ,x_{n-1})$ which differs from $x'$ in the sign of the $i$th component $x_i$. For our choice of $g$ we have
\EQ{\label{eq3.47}
\pd_n I_2( x_*',x_n,1) =- \pd_n I_2(x,1) .
}
That is, $\pd_n I_2(x,1)$ is odd in $x_i$ for the fixed $i$ in \eqref{eq3.45}.
It is because
\EQN{
&  \int_\Si
 K(x'-\xi',s)\, \pd_ig(\xi')\, d\xi'
=  \int_\Si
 K(x'_*-\xi'_*,s)\, \pd_ig(\xi')\, d\xi'\\
&=  \int_\Si
 K(x'_*-\eta',s)\, \pd_ig(\eta'_*)\, d\eta'
=-\int_\Si
 K( x'_*-\eta',s)\, \pd_ig(\eta')\, d\eta' .
}
Above $\eta' = \xi'_*$, and we used $\pd_ig(\eta'_*) = -\pd_i g(\eta')$ in the last equality.

In what follows we assume
\EQ{\label{xi.cond}
|x_i| = \max_{1\le j \le n-1} |x_j| \quad \text{and}\quad x_i<0.
}
The case $x_i>0$ follows from \eqref{eq3.47} with a sign change.

\medskip

{\bf Step 5}.\quad Key estimates.

Decompose $\pd_n I_2$ in \eqref{eq3.45} as
\EQN{
\frac {\pd_n I_2(x,1)}{C_n} &=
 \int_{0}^{x^2_n/2}\int_{\Si}   e^{-\frac{x_n^2}{4s}}s^{a-1 -\frac n2}
   K(x'-\xi',s)\bke{-\frac{1}{2}+\frac{x_n^2}{4 s}}
\bkt{ (\pd_i g)_- - (\pd_i g)_+} (\xi') \, d\xi' ds
\\
& \quad + \int_{x^2_n/2}^1 \int_{\Si}  e^{-\frac{x_n^2}{4s}}
s^{a-1 -\frac n2}\,   K(x'-\xi',s)\bke{\frac{1}{2}-\frac{x_n^2}{4 s}}
\bkt{   (\pd_ig)_+  -(\pd_ig)_-  }(\xi') \, d\xi' ds.
}
By Lemma \ref{th3.2} with a large enough $m$ that will be chosen later,
\EQN{
\frac {\pd_n I_2(x,1)}{C_n(4\pi)^{\frac{n-1}2} }
 &\ge \int_{0}^{x^2_n/2}\int_{\Si}   e^{-\frac{x_n^2}{4s}}s^{a - 1- \frac n2}
  \bke{-\frac{1}{2}+\frac{x_n^2}{4 s}} s^{\frac{n-1}2} \\
&\qquad\qquad\quad \cdot \bkt{\bke{\frac{m}{(m+1){d}}}^{n-2} (\pd_ig)_- - \bke{\frac{m}{(m-1){d}}}^{n-2} (\pd_ig)_+} (\xi') \, d\xi' ds
\\
&\quad  + \int_{x^2_n/2}^1 \int_{\Si}  e^{-\frac{x_n^2}{4s}} s^{a-1-\frac n2}\,  \bke{\frac{1}{2}-\frac{x_n^2}{4 s}} s^{\frac{n-1}2}\\
&\qquad\qquad \cdot \bkt{  \bke{\frac{m}{(m+1){d}}}^{n-2} (\pd_ig)_+  - \bke{\frac{m}{(m-1){d}}}^{n-2} (\pd_ig)_-  }(\xi') \, d\xi' ds
- J_{3},
}
where $d=|x' -\xi'|$ and
\EQN{
J_3 &= C \int_{0}^1\int_{\Si}   e^{-\frac{x_n^2}{4s}}\, \frac1{s^{\frac32 - a}}
  \abs{-\frac{1}{2}+\frac{x_n^2}{4 s}} \frac1{d^{n-2}}\, e^{-\frac{{d}^2}{8m^2s}} |\pd_i g|(\xi') \, d\xi' ds.
}
Thus, with $d_*=|x'_* -\xi'|$,
\EQS{\label{eq-0531-a}
&\frac {\pd_n I_2(x,1)}{C_n(4\pi)^{\frac{n-1}2} } + J_{3}
\\
&\ge  K_2 \int_{0}^{x^2_n/2}  e^{-\frac{x_n^2}{4s}}s^{a -\frac 32}
  \bke{-\frac{1}{2}+\frac{x_n^2}{4 s}}  ds
 + K_1  \int_{x^2_n/2}^1  e^{-\frac{x_n^2}{4s}} s^{a-\frac 32}\,  \bke{\frac{1}{2}-\frac{x_n^2}{4 s}}
 ds
\\
& = \bke{\frac{x_n^2}{4}}^{a-\frac 12} \Bigg[ K_2 \int_{1/2}^{\infty}  e^{-\sigma} \bke{ \sigma ^{\frac 12-a} - \frac12 \sigma ^{ -\frac 12-a}}d  \sigma
+ K_1  \int_{x_n^2/4}^{1/2}  e^{-\sigma}  \bke{\frac12 \sigma ^{ -\frac 12-a} - \sigma ^{\frac 12-a}  } d  \sigma \Bigg]
}
where $\si= \frac{x_n^2}{4s}$,
\EQ{\label{K1-def}
K_1 =
 \int_{B'_1, \xi_i>0}
\bkt{\bke{\frac{m}{(m+1){d_*}}}^{n-2} -\bke{\frac{m}{(m-1){d}}}^{n-2}}|\pd_i g(\xi')| \, d\xi',
}
and
\EQN{
K_2&=\int_{\Si ,\xi_i>0}
\bkt{\bke{\frac{m}{(m+1){d}}}^{n-2}  - \bke{\frac{m}{(m-1){d_*}}}^{n-2}} |\pd_ig (\xi')| \, d\xi' <0.
}
That $K_2<0$ is because $d>d_*$ for $x_i<0$ and $\xi_i>0$.
We now show a positive lower bound of $K_1$.
As $x_i<0$ by \eqref{xi.cond},  $d_*<d$ if $\xi_i>0$ and
\[
d-d_*
=
|x' -\xi'|-|x'_* -\xi'|
=\frac{|x'-\xi'|^2-|x' _* -\xi'|^2}{|x' -\xi'|+|x'_* -\xi'|}
=\frac{-4x_i\xi_i}{|x' -\xi'|+|x'_* -\xi'|}.
\]
On the support of $\pd_i g(\xi')$ with $\xi_i>0$, we have
\[
\xi_i\in\bke{\frac1{2\sqrt{n-1}},\frac4{5\sqrt{n-1}} },\quad |\xi'|\le 1.
\]
By \eqref{xi.cond}, $-x_i \ge \frac 1{\sqrt{n-1}}\,|x'|$. Also using $|x'|\ge 3$,
\EQ{\label{ineq-d_*-d}
d-d_*
\ge \frac{4\cdot\frac1{\sqrt{n-1}}\,|x'|\cdot\frac1{2\sqrt{n-1}}}{2\bke{|x'|+ 1}}
\ge \frac3{4 (n-1)}=:\delta.
}
Thus
\EQS{\label{eq3.54}
D &:=\bke{\frac{m}{(m+1){d_*}}}^{n-2} -\bke{\frac{m}{(m-1){d}}}^{n-2}
\\
& \ge \bke{\frac{m}{(m+1)(d-\de)}}^{n-2} -    \bke{\frac{m}{(m-1) d}}^{n-2}
\\
&= S \bke{\frac{m}{(m+1)(d-\de)}-    \frac{m}{(m-1) d}}
= \frac{ Sm \bke{(m+1)\delta-2d}}{(m+1)(d-\de)(m-1)d},
}
where
\EQS{\label{eq3.55}
S = \sum_{k=0}^{n-3} \bke{\frac{m}{(m+1)(d-\de)}}^{k}     \bke{\frac{m}{(m-1)d}}^{n-3-k}\approx \frac1{|x'|^{n-3}}
}
for $m>2$.
Choosing $m=4 (n-1) |x'| + 1$, we have
\EQ{
D \ge \frac{mS}{(m+1)(d-\de)(m-1)d} \bke{3|x'| - 2 (|x'| + 1) } \ge \frac {C_1}{|x'|^{n-1}},
}
for $|x'|\ge3$
 and $\xi' \in \supp \pd_i g$, $\xi_1>0$, with a constant $C_1$ independent of $|x'|\ge3$. Back to $K_1$ we have
\EQN{
K_1 &\ge \frac {C_1}{|x'|^{n-1}} \int_{B'_1, \xi_i>0}   |\pd_ig(\xi')|\, d\xi' = \frac {C_2}{|x'|^{n-1}}.
}

For $a<1/2$, we claim
\EQ{\label{M1M2def}
M_2:= \int_{1/2}^{\infty}  e^{-\sigma} \bke{ \sigma ^{\frac 12-a} - \frac12 \sigma ^{ -\frac 12-a}} d  \sigma
<  M_1:=\int_{0}^{1/2}  e^{-\sigma}  \bke{\frac12 \sigma ^{ -\frac 12-a} - \sigma ^{\frac 12-a}  } d  \sigma
}
which is equivalent to
\EQ{\label{eq3.60}
2\int_0^\infty  e^{-\sigma}\sigma^{\frac{1}{2}-a} d\sigma<  \int_0^\infty  e^{-\sigma}\sigma^{-\frac{1}{2}-a} d\sigma.
}
By integration by parts,
\EQN{
\text{LHS of \eqref{eq3.60}} &= \bkt{-2 e^{-\sigma}\sigma^{\frac{1}{2}-a}}_0^\infty  -   \int_0^\infty -2 e^{-\sigma}(\tfrac{1}{2}-a)\sigma^{-\frac{1}{2}-a} d\sigma
\\
&=0+  (1-2a)\text{RHS} < \text{RHS of \eqref{eq3.60}}.
}
Thus
\[
K_2 M_2 + K_1 M_1 >0
\]
if we take $m$ sufficiently large, $m \ge \frac {C(n)|x'|M_1}{M_1-M_2}$. In fact, to show $K_2 M_2 + K_1 M_1 >0$, we only need positivity of
\EQN{%
S=\bkt{\bke{\frac{m}{(m+1)d} }^{n-2} - \bke{ \frac{m}{(m-1)d_*} }^{n-2} }\th
+  \bkt{\bke{\frac{m}{(m+1)d_*} }^{n-2} - \bke{ \frac{m}{(m-1)d} }^{n-2} } >0,
}
where $\th= \frac{M_2}{M_1} \in (0,1)$. Let $A = d^{2-n}<(d_*+\de) ^{2-n} < B= d_*^{2-n} <1$.
Using
\[
1-C_1 \al \le \bke{\frac 1{1+\al}}^{n-2} \le 1-C_2 \al,\quad
1+C_3 \al \le \bke{\frac 1{1-\al}}^{n-2} \le 1+C_4 \al,
\]
for $\al \in (0,1/2)$,
where $C_i = C_i(n)$, and taking $\al = 1/m$,  we have
\EQN{
S &\ge (1-C_1\al)\th A - (1+C_4\al)\th B + (1-C_1 \al)B - (1+C_4\al) A
\\
&= (1-\th)(B-A) - \al (C_1 \th A + C_4 \th B + C_1 B + C_4 A)
\\
&> (1-\th)(B-A) - \al (C_1 + C_4)(A+ B ).
}
Thus $S>0$ if
\EQ{\label{lai-3.58}
\frac 1m = \al < \frac{(1-\th)(B-A)} {(C_1 + C_4)(B+A)},
}
uniformly in $\xi'$ in the support of $\pd_ig(\xi')$, $\xi_i>0$.
We have
\[
B-A
= \frac 1{d_*^{n-2}} - \frac 1{d^{n-2}}
\ge \frac 1{d_*^{n-2}} - \frac 1{(d_*+\de)^{n-2}}
\approx\frac 1{d_*^{n-3}} \bke{ \frac 1{d_*} - \frac 1{(d_*+\de)} }\approx\frac 1{d_*^{n-1}}
\approx \frac1{|x'|^{n-1}},
\]
and
$
B+A  \approx\frac 1{d^{n-2}} \approx \frac1{|x'|^{n-2}}.
$
Thus it suffices to take
\[
m = \frac {C(n) |x'|} {1-\th} = \frac {C(n)M_1}{M_1-M_2}\, |x'|.
\]
Then $S \gec B-A \gec |x'|^{1-n}$ and
\EQN{
K_1M_1 + K_2M_2
&= \int_{\supp \pd_i g, \xi_i>0} M_1 S |\pd_ig (\xi')|\, d\xi'\gec |x'|^{1-n} \int |\pd_ig (\xi')|\, d\xi',
}
and so
\EQS{\label{eq-0622-a}
\frac{\pd_n I_2(x,1)}{C_n(4\pi)^{\frac{n-1}2}} + J_3
&\gtrsim \bke{\frac{x_n^2}{4}}^{a-\frac 12} \Bigg[ K_2 M_2 + K_1 M_1 - K_1 \int_0^{{x_n^2/4}}  e^{-\sigma}  \bke{\frac12 \sigma ^{ -\frac 12-a} - \sigma ^{\frac 12-a}  } d  \sigma \Bigg]
\\
&\gtrsim \bke{\frac{x_n^2}{4}}^{a-\frac 12} \Bigg[|x'|^{1-n} - C |x'|^{2-n} x_n ^{1-2a}\Bigg]
\gtrsim |x'|^{1-n} x_n^{2a-1}- C |x'|^{2-n}
}
for $a<1/2$.

For $a=1/2$, using $K_1,\,|K_2| \lec |x'|^{2-n}$ and $K_1\gtrsim |x'|^{1-n}$, \eqref{eq-0531-a} becomes
\EQS{\label{eq-0622-b}
\frac {\pd_n I_2(x,1)}{C_n(4\pi)^{\frac{n-1}2} } + J_{3}
&\ge K_2 \int_{1/2}^{\infty}  e^{-\sigma} \bke{ 1 - \frac12 \sigma ^{ -1}}d  \sigma
+ K_1  \int_{{x_n^2/4}}^{1/2}  e^{-\sigma}  \bke{\frac12 \sigma ^{ -1} - 1  } d  \sigma \\
&\ge -C|x'|^{2-n} + \frac{K_1}2 \int_{x_n^2/4}^{1/2} e^{-\si} \si^{-1}\, d\si\\
&\ge -C|x'|^{2-n} + \frac{K_1}2\, e^{-\frac12} \int_{x_n^2/4}^{1/2} \si^{-1}\, d\si\\
&\ge -C|x'|^{2-n} + C|x'|^{1-n} \log \frac2{x_n}.
}

As for $J_3$, since $e^{-\frac{x_n^2}{4s}}\abs{-1/2 + x_n^2/(4s)}\lec 1$, we have
\[
|J_3|
\lec \int_0^1 \int_{\Si} \frac1{s^{\frac32 - a}}\, \frac1{d^{n-2}}\, e^{-\frac{d^2}{8m^2s}}\, |\pd_ig|(\xi')d\xi'ds.
\]
Using
\[
\int_0^1 \frac1{s^{\frac12 - a}} e^{-b/s}\frac {ds}s  = \int_b^\infty \bke{\frac vb}^{\frac12 - a} e^{-v} \frac {dv}v \le C b^{a-\frac12 },
\]
and
\[
\int_0^1 e^{-b/s}\frac {ds}s  = \int_b^\infty  e^{-v} \frac {dv}v \approx |\log b|
\]
for $b >0$, we get for $m \ge d$ and $b= \frac{d^2}{8m^2}$,
\EQS{\label{J3.est}
|J_3|
&\lec \int_{\supp \pd_i g}  \frac1{d^{n-2}}\,  \bkt{ \one_{a<\frac12}\, \bke{\frac{d^2}{8m^2}}^{a-\frac12 } + \one_{a=\frac12}\, \log \frac{8m^2}{d^2} } |\pd_i g|(\xi')\, d\xi'\\
&\lec \one_{a<\frac12}\, \frac{m^{1-2a} } {|x'|^{n-1 - 2a}} +  \one_{a=\frac12}\, \frac1{|x'|^{n-2}}\, \log\frac{4m}{|x'|}.
}
Choosing the same $m=C|x'|$, we get $|J_{3}|\lec \frac1{|x'|^{n-2}}$.
We conclude
\EQN{
\pd_n I_2(x,1)
\gtrsim |x'|^{1-n} \bke{ \one_{a<\frac12}\, x_n^{2a-1} + \one_{a=\frac12}\, \log \frac2{x_n}} - \frac C{|x'|^{n-2}}.
}
This completes the proof of \eqref{th1.1eq2}.
\end{proof}

\begin{proof}[Proof of Proposition \ref{th1.2}]\quad

We repeat Steps 1 and 2 of the proof of
Theorem \ref{th1.1}. In particular, $\pd_n I_2(x,1)$ is odd in $x_i$ as shown in \eqref{eq3.47}. Hence
$\pd_n I_2(x,1)=0$  if $x_i=0$, and it suffices to consider $x_i<0$. Recall $|x'|>3$, and we do not assume $|x_i| = \max_{1\le j \le n-1} |x_j|$.

We repeat Step 3 of the proof of
Theorem \ref{th1.1} up to \eqref{ineq-d_*-d}, and replace \eqref{ineq-d_*-d} by
\EQS{\label{delta.def}
d-d_*
&=\frac{-4x_i\xi_i}{|x' -\xi'|+|x'_* -\xi'|}
\\
&\ge \frac{4|x_i|\cdot\frac1{2\sqrt{n-1}}}{2\bke{|x'|+ 1}}
=\frac{|x_i|}{\sqrt{n-1}(1+|x'|)}=:\delta(|x_i|,|x'|),
}
for $\xi'$ in the support of $\pd_i g(\xi')$ with $\xi_i>0$. The estimate of $K_1$ defined in \eqref{K1-def} now proceeds as
\EQS{\label{K1-def-lai}
K_1 &=
 \int_{B'_1, \xi_i>0}
\bkt{\bke{\frac{m}{(m+1){d_*}}}^{n-2} -\bke{\frac{m}{(m-1){d}}}^{n-2}}|\pd_i g(\xi')| \, d\xi'\\&
\geq\int_{B'_1, \xi_i>0}
\bkt{\bke{\frac{m}{(m+1)(d-\delta)}}^{n-2} -\bke{\frac{m}{(m-1){d}}}^{n-2}}|\pd_i g(\xi')| \, d\xi' \\&
\ge \frac{C}{m|x'|^{n-1}}  \int_{B'_1, \xi_i>0} \bkt{(m+1)\de - 2d}
|\pd_i g(\xi')| \, d\xi'
}
using \eqref{eq3.54} and \eqref{eq3.55}. We now choose
\EQ{\label{m.choice2}
m = \frac 3\de(|x'|+1) = \frac{3\sqrt{n-1}(1+|x'|)^2}{|x_i|}.
}
Then
\[
K_1 \ge \frac{C}{m|x'|^{n-2}}  \int_{B'_1, \xi_i>0}
|\pd_i g(\xi')| \, d\xi'\ge \frac{C|x_i|}{|x'|^{n}}  \norm{\pd_i g}_{L^1}.
\]

For $a<1/2$, recall from the proof of
Theorem \ref{th1.1} we have
\[
\frac {\pd_n I_2(x,1)}{C_n(4\pi)^{\frac{n-1}2} } + J_{3} \ge  \bke{\frac{x_n^2}{4}}^{a-\frac 12}\bke{ K_2 M_2 + K_1 M_1  - C K_1 x_n ^{1-2a}}.
\]
To make $K_2 M_2 + K_1 M_1>0$ we take $m= C \frac{B+A}{B-A}$ by \eqref{lai-3.58}, while for
$A = d^{2-n}<(d_*+\de) ^{2-n} < B= d_*^{2-n} <1$ we have
\[
B-A
= \frac 1{d_*^{n-2}} - \frac 1{d^{n-2}}
\ge \frac 1{d_*^{n-2}} - \frac 1{(d_*+\de)^{n-2}}
\approx\frac 1{d_*^{n-3}} \bke{ \frac 1{d_*} - \frac 1{(d_*+\de)} }\approx\frac {\de}{d_*^{n-1}}.
\]
By \eqref{delta.def}, $\de = C|x_i|/|x'|$. Thus
\EQ{\label{m.choice3}
B-A \approx \frac {|x_i|}{|x'|^{n}}, \quad m = \frac {C|x'|^2}{|x_i|}\approx C \frac{B+A}{B-A} .
}
In this case,
 $S \gec B-A \gec \frac {|x_i|}{|x'|^{n}}$ and
\EQN{
K_1M_1 + K_2M_2
&= \int_{\supp \pd_i g, \xi_i>0} M_1 S |\pd_ig (\xi')|\, d\xi'\gec \frac {|x_i|}{|x'|^{n}} \int |\pd_ig (\xi')|\, d\xi'.
}
The error term
\[
  \bke{\frac{x_n^2}{4}}^{a-\frac 12}C K_1 x_n ^{1-2a} = C K_1 \lec |x'|^{2-n} .
\]

For $a=1/2$, using $K_1,\,|K_2| \lec |x'|^{2-n}$ and $K_1\gtrsim \frac{|x_i|}{|x'|^n}$, \eqref{eq-0531-a} becomes
\EQN{
\frac {\pd_n I_2(x,1)}{C_n(4\pi)^{\frac{n-1}2} } + J_{3}
&\ge K_2 \int_{1/2}^{\infty}  e^{-\sigma} \bke{ 1 - \frac12 \sigma ^{ -1}}d  \sigma
+ K_1  \int_{{x_n^2/4}}^{1/2}  e^{-\sigma}  \bke{\frac12 \sigma ^{ -1} - 1  } d  \sigma \\
&\ge -C|x'|^{2-n} + \frac{K_1}2 \int_{x_n^2/4}^{1/2} e^{-\si} \si^{-1}\, d\si\\
&\ge -C|x'|^{2-n} + \frac{K_1}2\, e^{-\frac12} \int_{x_n^2/4}^{1/2} \si^{-1}\, d\si\\
&\ge -C|x'|^{2-n} + C\,\frac{|x_i|}{|x'|^n}\, \log \frac2{x_n}.
}

As for $J_{3}$, by the estimate \eqref{J3.est} and the choice \eqref{m.choice3} of $m$, we have
\EQN{
|J_{3}|
&\lec \one_{a<\frac12}\, \frac{m^{1-2a} } {|x'|^{n-1 - 2a}} + \one_{a=\frac12}\, \frac1{|x'|^{n-2}}\, \log\frac{4m}{|x'|}\\
& \lec \one_{a<\frac12}\, \frac{C}{|x_i|^{1-2a} |x'|^{n-3+2a}} + \one_{a=\frac12}\, \frac1{|x'|^{n-2}}\, \log \frac{4|x'|}{|x_i|},
}
which is no smaller than $C|x'|^{2-n}$.
From \eqref{eq3.6} of Remark \ref{rem3.2}, the above discussion and the fact  $x_i>0$ follows from \eqref{eq3.47} with a sign change, we conclude for $x_i\neq0$
\EQN{
 |\pd_{x_n} \hat v_i(x,1)| &\geq \frac{C|x_i|}{|x'|^{n}}\, \bke{\one_{a<\frac12}\, x_n^{2a-1} + \one_{a=\frac12}\, \log\frac2{x_n} }\\
 &\quad - C  \bkt{\one_{a<\frac12}\, \frac{C}{|x_i|^{1-2a} |x'|^{n-3+2a}} + \one_{a=\frac12}\, \frac1{|x'|^{n-2}}\, \log \frac{4|x'|}{|x_i|} }.
}
This completes the proof of \eqref{th1.1eq3}.
\end{proof}

\begin{remark}
In the case $a=1/2$, H\"older continuity of $\hat v$ was proved up to boundary away from the region with non-zero boundary flux in Kang \cite{Kang2005}. The case $0<a<1/2$ can be proved similarly. However, it would require extra work to show H\"older continuity up to the support of the boundary flux, and it is easier to refer to Chang-Jin \cite{MR3398794}.
\end{remark}

\section{Navier-Stokes flows}
\label{Sec4}

Let $n \ge 3$ and $\phi(\xi',s)$ be defined on $\Si \times \R$ by
\eqref{boundary-data1}-\eqref{boundary-data3}.
Let $\hat v(x,t)$ be the Stokes flow with boundary data $\phi(x',t)$ and initial data $\hat v(x,0)=0$,
given by \eqref{hatv.formula}. Our aim in this section is to construct a solution $u(x,t)$ of the Navier-Stokes equations \eqref{NS} with similar properties of the form
\EQ{
    u = \al \hat v +v, \quad v= O(\al^2),
}
for $\al>0$ sufficiently small. Since $\hat v$ solves the Stokes system \eqref{E1.1}, $v$ solves
\EQ{\label{eq-stokes}
\pd_t v - \De v + \nb \pi = - (\al \hat v + v) \cdot \nb (\al \hat v + v), \quad \div v=0,
}
with zero initial and boundary values
\[
v(x,0)=0, \quad
v(x',0,t)=0.
\]

Since the main term of $u$ is $\al \hat v$, we expect $u$ to satisfy the same bounds of $\hat v$.
By Proposition \ref{th3.1}, $\hat v(x,t)$ satisfies the decay estimates
\EQ{\label{eq-0415-a}
| \hat v(x,t) | \lec \frac1{\bka{x}^{n-1}}\quad \text{ and }\quad
| \pd_{x_j} \hat v_i(x,t) | \lec \frac{1}{\bka{x}^{n}}
 + \frac{\si \LN(x,t) } {\bka{x} ^{n-1}(x_n+1)^{2a}},
}
where $\si = \de_{i<n} \de_{jn}$ and $\LN$ is defined in \eqref{LN.def}.
Hence we define $X$ and $Y$ to be the sets of functions defined in $\R^n_+ \times (0,2)$ with finite norms
\EQS{\label{XTYT.def}
\norm{f}_{{X}} &= \sup_{(x,t)\in\R^n_+\times (0,2)} \bke{ |f(x,t)| + |\na f(x,t)|} \bka{x}^n,\\
\norm{f}_{{Y}} &= \sup_{(x,t)\in\R^n_+\times (0,2)} \bkt{ |f(x,t)|  \bka{x}^{n-1} + |\na f(x,t)|\bke{ \frac{1}{\bka{x}^{n}}
 + \frac{ \LN (x,t)} {\bka{x} ^{n-1}(x_n+1)^{2a}}}^{-1}}.
}
We have $\hat v \in {Y}$ and $\norm{f}_{{Y}} \le \norm{f}_{{X}}$ (with constant 1).

Consider the bilinear map
\EQ{
B(f,g)_i(x,t) = \int_0^t \int_{\R^n_+} \pd_{y_k} G_{ij}(x,y,t-s) f_k(y,s) g_j (y,s)\, dy\,ds.
}

\begin{lem}\thlabel{lem4.1}
There is an absolute constant $C_1>0$ such that, if $f,g\in {Y}$, then
\[
\norm{ B(f,g) }_{{X}} \le C_1 \norm{f}_{{Y}} \norm{g}_{{Y}}.
\]
\end{lem}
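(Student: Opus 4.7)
The plan is to prove separately that $|B(f,g)(x,t)|$ and $|\nabla_x B(f,g)(x,t)|$ are each bounded by $C\|f\|_Y\|g\|_Y\bka{x}^{-n}$ uniformly in $(x,t)\in\R^n_+\times(0,2)$. The basic inputs from the $Y$-norm are $|f(y,s)g(y,s)|\lec\|f\|_Y\|g\|_Y\bka{y}^{-2(n-1)}$ and, using the full definition \eqref{XTYT.def}, $|\nabla_y(f_kg_j)|\lec\|f\|_Y\|g\|_Y\,W(y,s)$ with
\[
W(y,s)=\bka{y}^{-(2n-1)}+\frac{\LN(y,s)}{\bka{y}^{2(n-1)}(y_n+1)^{2a}}.
\]
For the Green tensor factor I use \thref{thm3}: since $|x^*-y|\ge|x-y|$ whenever $x,y\in\R^n_+$, the boundary-corrector piece is dominated by the first term of \eqref{eq_Green_estimate} (modulo a harmless integrable $(x_n+y_n+\sqrt u)^{-1}$ factor when a pure $y_n$ derivative is involved), so $|\pd_{y_k}G|,\,|\pd_{x_l}G|\lec(|x-y|+\sqrt u)^{-(n+1)}$.

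For the zeroth-order estimate of $|B(f,g)|$, the plan is to insert the $|fg|$ bound and split the $y$-integration at $|y|=|x|/2$ (for $|x|$ large; the case $\bka{x}\lec1$ follows by local integrability after time integration, using $\int_0^t(|x-y|+\sqrt u)^{-(n+1)}du\lec|x-y|^{-(n-1)}$). In the inner region $|y|<|x|/2$, one has $|x-y|\gec\bka{x}$, so the kernel is $\lec\bka{x}^{-(n+1)}$ and the weighted $y$-integral $\int_{\R^n_+}\bka{y}^{-2(n-1)}\,dy$ is finite for $n\ge3$, giving $\lec\bka{x}^{-(n+1)}$. In the outer region, $\bka{y}^{-2(n-1)}\lec\bka{x}^{-2(n-1)}$ pulls out the weight, and the remaining space--time integral is finite via $\int_{\R^n}(|z|+\sqrt u)^{-(n+1)}dz\lec u^{-1/2}$ followed by $\int_0^tu^{-1/2}du\lec1$, giving $\lec\bka{x}^{-2(n-1)}$. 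Both are dominated by $\bka{x}^{-n}$ when $n\ge3$.

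The gradient estimate $|\nabla_xB(f,g)|$ is more delicate because the naive bound based on $|\pd_x\pd_yG|\lec(|x-y|+\sqrt u)^{-(n+2)}$ produces a logarithmically divergent time integral in the outer region. The remedy is to integrate by parts in $y_k$ inside $B$. Since $G_{ij}(x,y',0,t)=0$ (by symmetry of the Stokes Green tensor together with its zero-velocity boundary condition), the boundary term vanishes and I obtain
\[
\pd_{x_l}B(f,g)_i=-\int_0^t\!\!\int_{\R^n_+}\pd_{x_l}G_{ij}(x,y,t-s)\,\pd_{y_k}(f_kg_j)(y,s)\,dy\,ds.
\]
Now $|\pd_{x_l}G|$ has the same $(n+1)$-order singularity as $|\pd_{y_k}G|$, so the contribution of the first piece $\bka{y}^{-(2n-1)}$ of $W$ is handled by the same split-and-integrate argument as $|B|$ above (the extra $\bka{y}^{-1}$ only helps) and yields a bound $\lec\bka{x}^{-(n+1)}$.

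The main obstacle will be controlling the $\LN$-weighted term of $W$, which is singular near $y_n=0$ and $s=1$ and not separable in $(y,s)$. My plan is to integrate first in $y'\in\R^{n-1}$ using Lemma \ref{lemma2.1} (with $d=n-1$, $k=n+1$) to reduce the Green-tensor kernel to an effective one-dimensional kernel $(|x_n-y_n|+\sqrt u)^{-2}$, and then perform the $y_n$- and $s$-integrations by combining the algebraic decay $(y_n^2+|s-1|)^{a-1/2}$ with Lemma \ref{lemma2.1} and, when $a=\tfrac12$, Lemma \ref{th2.3} to absorb the logarithmic factor. This is exactly the type of space--time integrability carried out in the derivation of \eqref{eq-CL0115-a} and \eqref{eq-est-pdnI1} in Step 2 of the proof of Proposition \ref{th3.1}. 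The inner-region contribution is $\lec\bka{x}^{-n}$ and the outer region gives a stronger bound via the prefactor $\bka{x}^{-2(n-1)}\le\bka{x}^{-n}$ for $n\ge3$. Assembling the four contributions yields $\|B(f,g)\|_X\le C_1\|f\|_Y\|g\|_Y$ with an absolute constant.
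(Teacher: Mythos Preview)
Your overall architecture matches the paper's: bound $|B(f,g)|$ using $|fg|\lec\|f\|_Y\|g\|_Y\bka{y}^{2-2n}$, then integrate by parts in $y_k$ (using $G_{ij}(x,y,t)|_{y_n=0}=0$) to write $\nabla_xB$ with only one derivative on $G$ and $\partial_k(f_kg_j)$ on the data. Two points deserve comment.

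First, your pointwise bound $|\nabla_{x,y}G|\lec(|x-y|+\sqrt u)^{-(n+1)}$ is not correct for the boundary-corrector piece: when $x_n,y_n$ are small and $|x'-y'|$ is large, the term $(|x^*-y|+\sqrt u)^{-n}(x_n+y_n+\sqrt u)^{-1}$ is genuinely larger. The paper instead uses the cruder but uniformly valid
\[
|\nabla_{x,y}G_{ij}(x,y,s)|\lec(|x-y|+\sqrt s)^{-n}s^{-1/2},
\]
and then applies Lemma~\ref{lemma2.2} to the convolution $\int(|x-y|+\sqrt s)^{-n}\bka{y}^{2-2n}\,dy$. This matters: with only $(|x-y|+\sqrt u)^{-n}$ left in your outer region after pulling out $\bka{x}^{2-2n}$, the $y$-integral $\int_{\R^n}(|z|+\sqrt u)^{-n}dz$ diverges logarithmically, so your split as written does not close. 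Lemma~\ref{lemma2.2} (applied with $k=n$, $m=2n-2$) handles the borderline case and produces $\log(|x|+3)\bka{x}^{2-2n}+\bka{x}^{-n}$, both dominated by $\bka{x}^{-n}$ for $n\ge3$.

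Second, and more substantially, the paper's treatment of the $\LN$-weighted contribution is much simpler than your plan. Rather than reducing to a one-dimensional kernel and performing a coupled $(y_n,s)$-integration, the paper observes the elementary pointwise bound
\[
\frac{\LN(y,s)}{(y_n+1)^{2a}}\lec\frac1{|s-1|^{\beta}},\qquad \beta=\tfrac12-a\ \text{(or $\beta=\tfrac14$ when $a=\tfrac12$)},
\]
which decouples $y$ from $s$ entirely. The $y$-integral is then identical to the one for $|B(f,g)|$, and the remaining time integral $\int_0^t s^{-1/2}|t-s-1|^{-\beta}\,ds$ is finite since $\beta<\tfrac12$. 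Your references to the estimates \eqref{eq-CL0115-a} and \eqref{eq-est-pdnI1} are not directly on point, since those rely on Gaussian factors $e^{-x_n^2/cs}$ absent here. Your route could likely be pushed through with Lemma~\ref{lemma2.2} in dimension one, but the paper's decoupling is the cleaner idea.
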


\begin{proof}
We may assume $\norm{f}_{{Y}} \le 1$ and $\norm{g}_{{Y}}\le 1$.
By $\nb_{x,y} G_{ij}$ estimates \thref{thm3},
\EQS{\label{0426a}
|\nb_{x,y} G_{ij}(x,y,s) | &\lec \frac1{(|x-y|+\sqrt{s})^{n+1}} + \frac1{(|x^*-y|+\sqrt{s})^n (x_n+y_n+\sqrt{s})}
\\ &\lec  \frac1{(|x-y|+\sqrt{s})^n\sqrt{s}} .
}
Changing variables $s \to t-s$ and using Lemma \ref{lemma2.2} and $n \ge 3$,
\begin{align*}
|B(f,g)(x,t)|
&\lec\int_0^t \frac1{\sqrt{s}} \int_{\R^n_+} \frac1{(|x-y|+\sqrt{s})^n} \frac1{(|y|+1)^{2n-2}}\, dyds  \\
&\lec \int_0^t \frac1{\sqrt{s}} \bkt{  \frac1{(|x|+\sqrt{s}+1)^{2n-2}}\, \log \frac{|x|+\sqrt{s}+1}{\sqrt{s}} + \frac1{(|x|+\sqrt{s}+1)^{n}} } ds   \\
&\lec \int_0^{t}\bke{\frac{\log(|x|+3)+ |\log \sqrt s|}{(|x|+1)^{2n-2}}  + \frac{1}{(|x|+1)^n}}
\frac{ds}{\sqrt{s}}.
\end{align*}
Since the time integral is uniformly bounded in $t$,
\EQ{\label{0426b}
|B(f,g)(x,t)|  \lec \frac{\log (|x|+3)}{\bka{x}^{2n-2}} + \frac1{\bka{x}^n}\lec \frac1{\bka{x}^n}.
}

For $\na B(f,g)$, we use the formula
\[
\na B(f,g)(x,t) = - \int_0^t \int_{\R^n_+} \na_x G_{ij}(x,y,t-s) \pd_k [f_k(y,s)  g_j(y,s)] \, dy\,ds.
\]
Using  $\norm{f}_{{Y}} \le 1$ and $\norm{g}_{{Y}}\le 1$,  we have
\[
\pd_k [f_k(y,s)  g_j(y,s)] \lec \frac{1}{\bka{y}^{2n-1}}
 + \frac{ \LN(y,s) } {\bka{y} ^{2n-2}(y_n+1)^{2a}},
\]
where $\LN$ is defined in \eqref{LN.def}.
By  $\nb_{x,y} G_{ij}$ estimates \eqref{0426a},
\EQN{
&|\na B(f,g)(x,t)| \lec
 \int_0^t \int_{\R^n_+} \frac1{(|x-y|+\sqrt{t-s})^n \sqrt{t-s}} \frac1{(|y|+1)^{2n-1}}\, dyds
\\
&\quad + \int_0^t \int_{\R^n_+} \frac1{(|x-y|+\sqrt{t-s})^n\sqrt{t-s}} \frac{\LN(y,s)}{(|y|+1)^{2n-2}(y_n+1)^{2a}}\, dyds =: I_1+I_2.
}

By the same estimate of $|B(f,g)(x,t)| $ in \eqref{0426b} as $\bka{y}^{-2n+1} \le \bka{y}^{-2n+2}$, we have
\EQ{\label{0426c}
I_1 \lec  \frac{1}{\bka{x}^n} .
}

For $I_2$, note that $|s-1|<1$ since $s\in(0,2)$.
When $a\in (0,\frac12)$, let $\be=\frac12-a \in (0,\frac12)$,
\EQN{
\frac{\LN(y,s)}{(y_n+1)^{2a}}
= \frac1{(y_n^2+|s-1|)^{\frac12-a}(y_n+1)^{2a}}
&\lec \frac1{|s-1|^{\be}}.
}

When $a=1/2$, choosing $\be=1/4$, (or any value in $(0,\frac12)$)
\EQN{
\frac{\LN(y,s)}{(y_n+1)^{2a}}
= \frac{\log\bke{2+\frac1{y_n^2+|s-1|}}}{(y_n+1)}
&\lec 1 + (y_n^2+|s-1|)^{-\be}
\lec  \frac1{|s-1|^{\be}}.
}
Hence  for all $a\in (0,1/2]$, changing variables $s \to t-s$  and using Lemma \ref{lemma2.2} and $n \ge 3$,
\EQN{
I_2
&\lec \int_0^t  \int_{\R^n_+} \frac1{(|x-y|+\sqrt{s})^n} \frac1{(|y|+1)^{2n-2} }\,dy\,\frac1{\sqrt{s}|t-s-1|^\be}\,ds\\
&\lec \int_0^t \bkt{  \frac1{(|x|+\sqrt{s}+1)^{2n-2}}\, \log \frac{|x|+\sqrt{s}+1}{\sqrt{s}} + \frac1{(|x|+\sqrt{s}+1)^{n}} } \,\frac1{\sqrt{s}|t-s-1|^\be}\, ds\\
&\lec \frac1{(|x|+1)^{2n-2}} \int_0^t \frac{ {\log(|x|+3)+ |\log \sqrt s|}}{\sqrt{s} |t-s-1|^\be}\, ds  + \frac1{(|x|+1)^n} \int_0^t \frac1{\sqrt{s} |t-s-1|^\be}\, ds.
}
Since $\be<1/2$, both time integrals converge and are uniformly bounded in $t$, and
\EQ{\label{0426d}
I_2 \lec \frac{\log (|x|+3)}{\bka{x}^{2n-2}} + \frac1{\bka{x}^n}\lec \frac1{\bka{x}^n}.
}
Combining \eqref{0426c} and \eqref{0426d}, we have derived
\[
|\na B(f,g)(x,t)| \lec \frac1{\bka{x}^n}.
\]
This completes the proof of \thref{lem4.1}.
\end{proof}

\begin{prop}\thlabel{prop4.1}
Let $n\ge3$ and $\hat{v}$ be a solenoidal vector field in $\R^n_+ \times (0,2)$ satisfying the pointwise bounds \eqref{eq-0415-a}.
There is a small constant $\al_0>0$ such that, for any $\al \in (-\al_0,\al_0)$,
there is a solution $v$ to
\EQ{\label{eq-stokes}
\pd_t v - \De v + \nb \pi = - (\al \hat v + v) \cdot \nb (\al \hat v + v), \quad \div v=0
}
 in $\R^n_+ \times (0,2)$,
with $v(x',0,t)=0$,  $v(x,0)=0$, and the pointwise bounds
\EQ{\label{prop4.1-eq1}
\bke{ |v(x,t)| +  |\na v(x,t)|} \lec \bka{x}^{-n}.
 }
\end{prop}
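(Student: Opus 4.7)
The plan is to set up \eqref{eq-stokes} as a fixed-point equation using the Green tensor $G_{ij}$ of the Stokes system in $\R^n_+$ and solve it by Banach contraction in the space $X$ defined in \eqref{XTYT.def}. The hypothesis \eqref{eq-0415-a} gives $\norm{\hat v}_{{Y}} \le C_0$ for some universal constant $C_0>0$, so \thref{lem4.1} is the only nonlinear input needed.

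First I would rewrite the nonlinearity as a full divergence: since both $\hat v$ and $v$ are divergence-free, $w := \al\hat v + v$ satisfies $\div w = 0$ and so $(w\cdot\nb w)_j = \pd_{y_k}(w_k w_j)$. Representing $v$ through the Green tensor as in \eqref{E1.3} and moving $\pd_{y_k}$ onto $G_{ij}$ by integration by parts in $y$ (see the last paragraph for the boundary-term caveat), equation \eqref{eq-stokes} becomes the fixed-point equation
\[
v = \mathcal{T}(v) := \al^2 B(\hat v,\hat v) + \al B(\hat v, v) + \al B(v, \hat v) + B(v,v),
\]
with $B$ as defined just before \thref{lem4.1}.

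Next, \thref{lem4.1} together with the trivial embedding $\norm{\cdot}_{{Y}} \le \norm{\cdot}_{{X}}$ yields, for any $v,v_1,v_2\in X$,
\[
\norm{\mathcal{T}(v)}_{{X}} \le C_1\bke{\al^2 C_0^2 + 2|\al| C_0 \norm{v}_{{X}} + \norm{v}_{{X}}^2},
\]
\[
\norm{\mathcal{T}(v_1) - \mathcal{T}(v_2)}_{{X}} \le C_1\bke{2|\al|C_0 + \norm{v_1}_{{X}} + \norm{v_2}_{{X}}}\norm{v_1 - v_2}_{{X}}.
\]
Setting $M := 4 C_1 C_0^2 \al^2$ and choosing $\al_0>0$ so small that $16 C_1^2 C_0^2 \al_0^2 + 4 C_1 \al_0 C_0 \le 1/2$, one checks directly that $\mathcal{T}$ maps the closed ball $\{v\in X:\norm{v}_{{X}}\le M\}$ into itself and is a strict contraction on it. Banach's fixed-point theorem then produces a unique fixed point $v$ in this ball, and $\norm{v}_{{X}} \le M = O(\al^2)$ delivers \eqref{prop4.1-eq1}.

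The one technical point is the integration by parts on $\Si$: because $w_n(y',0,s) = \al\phi_n(y',s)$ does not vanish there, moving $\pd_{y_k}$ onto $G_{ij}$ produces a boundary contribution of the form $\al^2\int_0^t\int_\Si G_{in}(x,y',0,t-s)\phi_n(y',s)^2\,dy'\,ds$. Using \thref{thm3} and the compact support of $\phi$, this term and its gradient are themselves $O(\al^2\bka{x}^{-n})$ and can be absorbed into the inhomogeneous part $\al^2 B(\hat v,\hat v)$ without affecting the scheme. The remaining verifications — that $\div v = 0$, that $v$ takes the prescribed zero initial and lateral boundary values, and that a compatible pressure $\pi$ exists — all follow routinely from the properties of $G_{ij}$ established in \cite{Green}.
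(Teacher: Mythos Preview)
Your proposal is essentially the paper's own proof: both set up the fixed-point equation $v = B(\al\hat v+v,\al\hat v+v)$ in the space $X$, invoke \thref{lem4.1} for the bilinear estimate, and close by contraction with the same choice $M = 4C_1\al^2\norm{\hat v}_Y^2$. The only cosmetic difference is that the paper writes out the Picard iterates $v^{(m)}$ explicitly while you appeal directly to Banach's theorem; these are equivalent.

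Your final paragraph on the boundary contribution from integration by parts goes beyond what the paper does: the paper simply \emph{defines} the mild solution by the formula \eqref{prop4.1-eq2} (derivative already on $G_{ij}$) and never discusses the passage from \eqref{E1.3} with $f=-\nabla\cdot(w\otimes w)$ to this form. Your observation that any such boundary term is a fixed $O(\al^2)$ contribution supported where $\phi$ is, and hence harmless for the contraction scheme, is a reasonable way to patch this; but note that the same issue recurs inside the proof of \thref{lem4.1} itself (the formula for $\nabla B(f,g)$ used there also comes from an integration by parts in $y_k$), so if you want to be fully careful you should either verify that $G_{ij}(x,y,t)|_{y_n=0}$ has the vanishing needed to kill these terms (this is established in \cite{Green}) or carry the boundary correction through the Lipschitz estimate as well.
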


\begin{proof}
By the solution formula \eqref{E1.3}, a mild solution $v$ of \eqref{eq-stokes}  satisfies
\EQ{\label{prop4.1-eq2}
v = B(\al \hat v + v, \al \hat v + v) = \al^2 B(\hat v ,\hat v) + \al B(\hat v ,v) + \al B(v ,\hat v) + B(v ,v).
}
The main term of $v$ is the source term $\al^2 B(\hat v ,\hat v)$. %
We construct $v$ by iteration:
\[
v_i^{(0)} (x,t) = 0,\qquad
v_i^{(m+1)} (x,t) = B(\al \hat{v} + v^{(m)}, \al \hat{v} + v^{(m)})_i (x,t)
\]
for $m \ge 0$. Let ${X}$ and ${Y}$ be defined as in \eqref{XTYT.def} and $A=\norm{\hat v}_{{Y}}$. Note that $A \lec 1$ by Proposition \ref{th3.1}.

For the first iteration $v_i^{(1)} (x,t) = B(\al \hat{v} , \al \hat{v} )_i$, we have by \thref{lem4.1}
\[
\norm{v^{(1)}}_{{X}} = \al^2 \norm{B(\hat v,\hat v)}_{{X}} \le C_1 \al^2 A^2.
\]

Assume, for the sake of induction, that $\norm{v^{(m)}}_{{X}} \le M:=4C_1 \al^2 A^2$ for small $\al\in\R$. Then, by \thref{lem4.1},
\EQN{
\norm{v^{(m+1)}}_{{X}}
&\le C_1 \norm{\al \hat v+ v^{(m)}}_Y^2 \le  C_1 \bke{\norm{\al \hat v}_Y + \|v^{(m)}\|_X }^2
\\
&\le
C_1 (|\al| A + M)^2 \le M,
}
if $4C_1 |\al| A \le 1$ (which implies $M \le |\al| A  $).
By induction, we have $\norm{ v^{(m)} }_{{X}} \le M=O(\al^2)$ for all $m\ge1$.

We next show the convergence of the sequence $\{v^{(m)}\}_m$.
Note
\EQN{
v^{(m+1)} - v^{(m)} &=  B(\al \hat v+v^{(m)}, \al \hat v+v^{(m)})- B(\al \hat v+ v^{(m-1)},\al \hat v+v^{(m-1)})
\\ &= B(\al \hat v+v^{(m)}, v^{(m)}-v^{(m-1)}) + B( v^{(m)}-v^{(m-1)},\al \hat v+v^{(m-1)}) .
}
By \thref{lem4.1},
\EQN{
\norm{ v^{(m+1)} - v^{(m)} }_{{X}}
& \le C_1 \bke{\norm{\al \hat v}_Y + \|v^{(m)}\|_X } \cdot  \norm{v^{(m)} -v^{(m-1)}}_X
\\
& \quad+ C_1 \bke{\norm{\al \hat v}_Y + \|v^{(m-1)}\|_X } \cdot  \norm{v^{(m)} -v^{(m-1)}}_X
\\
&
\le 2C_1(|\al| A+M)\norm{v^{(m)} -v^{(m-1)}}_X
\le \frac 12 \norm{v^{(m)} -v^{(m-1)}}_X
}
if $8C_1|\al| A\le 1$.
Therefore, by contraction mapping principle, $\{v^{(m)}\}_m$ converges to a solution $v\in {X}$ of \eqref{prop4.1-eq2} with $\norm{v}_{{X}} \le M$  if $\al$ is sufficiently small,
\[
|\al| \le \al_0:=(8C_1\norm{\hat v}_{{Y}})^{-1}.
\]
This completes the proof of \thref{prop4.1}.
\end{proof}

\medskip

\begin{proof}[Proof of Theorem \ref{th1.3}]
For any given $\phi$ satisfying \eqref{boundary-data1}--\eqref{boundary-data3} for $0<a\le 1/2$,
let $\hat v(x,t)$ be defined as in Proposition \ref{th3.1} and $v$ be the solution constructed in \thref{prop4.1}. It is easy to show that the vector field $u = \al \hat v + v$ is a solution to the Navier-Stokes equations \eqref{NS} with boundary data $\al \phi(\xi',s)$, zero initial data and zero force. It satisfies the pointwise bounds \eqref{eq-0415-a} by \eqref{th31eq1}, \eqref{th31eq2} and \eqref{prop4.1-eq1}. Thus it has finite global energy \eqref{th1.1eq1}.

We now show the H\"older continuity of $u= \al \hat v + v$. We have already shown the H\"older continuity of $ \hat v$ in Theorem \ref{th1.1}. We can consider $v$ as a solution of the Stokes system with zero initial and boundary values, and nonzero force $\nb\cdot F$, $F=u\otimes u$. By the pointwise bounds $|u(x)|\lec \bka{x}^{1-n}$ from \eqref{eq-0415-a}, $F(x) \in L^p(\R^n_+\times(0,2))$ for any $p\in [1,\infty]$. By Proposition 2.3 of Chang-Choe-Kang \cite{MR3815542} and taking $p<\infty$ arbitrarily large, we get
$v \in C^{2c,c}(\overline{\R^n_+}\times [0,2])$ for any $c<1/2$. Hence $u = \al \hat v + v \in C^{2b,b}(\overline{\R^n_+}\times [0,2])$.

Now we consider the regularity of the pressure $\pi$. Recall $u = \al \hat v + v$ and decompose $\pi=\hat p + \pi_v$, where $(v,\pi_v)$ solves the nonhomogeneous Stokes system \eqref{eq-stokes}
with source term $f= - (\al \hat v + v) \cdot \nb (\al \hat v + v)$,
and zero initial and boundary values.

By Proposition \ref{th3.1} and \eqref{prop4.1-eq1},
\EQ{
|\al \hat v + v|(x,t) \lec \frac 1{\bka{x}^{n-1}},
}
and
\EQ{
|\nb (\al \hat v + v)(x,t)| \lec \frac {1+\de_{a=\frac12} \log\bke{2 + \frac 1{x_n+\sqrt{|t-1|}}}}{\bka{x}^{n-1}(x_n+1)^{2a}(x_n+\sqrt{|t-1|})^{1-2a}}.
}
Hence, when $a=1/2$,
\EQ{
|f(x,t)|\lec \frac { \log\bke{2 + \frac 1{\sqrt{|t-1|}}}}{\bka{x}^{2n-2}(x_n+1)}
\in L^r(0,2; L^1\cap L^\infty (\R^n_+)),
}
for any $r\in (1,\infty)$. When $0<a<1/2$,
\EQ{
|f(x,t)| \lec \frac {1}{\bka{x}^{2n-2}(x_n+1)^{2a}(x_n+\sqrt{|t-1|})^{1-2a}}\in L^r(0,2; L^q (\R^n_+)),
}
for any $r,q\in (1,\infty)$ with
\EQ{\label{rq.cond}
\frac 1q + \frac 2r > 1-2a.
}
By maximal regularity theorem for Stokes system  (by Solonnikov \cite{MR0415097} for equal exponents, by Sohr and vol Wahl \cite{MR847086} for mixed exponents, and by Giga and Sohr \cite{GiSo91} for $T$-independent constant),
\EQ{
\norm{\nb \pi_v}_{L^r(0,2; L^q(\R^n_+))} \le C \norm{f}_{L^r(0,2; L^q(\R^n_+))}  \le C,
}
for any $r, q \in (1,\infty)$ satisfying \eqref{rq.cond}.
By Sobolev imbedding, for $q \in (1,n)$ and $1/m=1/q-1/n$,
\EQ{\label{piv.space}
\pi_ v \in L^r(0,2; L^{m}(\R^n_+)),
}
for any $r\in (1,\infty)$ and any $m \in (\frac n{n-1},\infty)$  satisfying
\EQ{\label{piv.space2}
\frac 1m + \frac 2r > 1-2a-\frac 1n.
}

Recall from \eqref{est-p-hat} that for $n \ge 3$,
\EQ{
   |\hat p(x,t)| \lec |1-t|^{a-1}  \bka{x}^{2-n} ,  \quad  0<t<2.
}
Together with \eqref{piv.space}-\eqref{piv.space2}, the total pressure $\pi = \hat p + \pi_v$ satisfies \eqref{pi-regular}, noting that
the condition $r< \frac 1{1-a}$ implies \eqref{piv.space2}.

Finally, with the choice \eqref{gxiform} of $g$, the unboundedness of normal derivative \eqref{th1.3eq1} follows from \eqref{th1.1eq2} and \eqref{prop4.1-eq1}.
\end{proof}

\begin{remark}
The H\"older continuity of $v$ can be also proved by hand, but  it is easier to refer to Chang-Choe-Kang \cite{MR3815542}.
\end{remark}

\section{Appendix A.\ Dipole bumps}
\renewcommand{\theequation}{A.\arabic{equation}}
\renewcommand{\thethm}{A.\arabic{thm}}
\label{Sec5}

In this appendix we consider variations of the boundary data, and their corresponding solutions of the Stokes system. For simplicity, we only consider space dimension $n=3$, and the boundary data which we call \emph{dipole bumps}.

\begin{prop}[Dipole bumps]\label{thA.1}
Let $n =3$ and $\phi(\xi',s)$ be defined on $\Si \times \R$ by
\eqref{boundary-data1}-\eqref{boundary-data3} with $0<a \le 1/2$.
Let $v(x,t)$ be the solution of the Stokes system \eqref{E1.1} with boundary data $\phi$, zero initial data and zero force, given by \eqref{hatv.formula}.
Then $v$ satisfies the pointwise bounds in Proposition \ref{th3.1}, and it has finite global energy
\eqref{th1.1eq1}.
If we further choose boundary data of the form
\EQ{ \label{thA.1-eq1}
\phi(\xi',t) = G(\xi') \,h(t)\, e_3, \quad
G(\xi')=-g(\xi'-10e_1)+g(\xi'+10e_1),
}
where $g(\xi')$ is as chosen in \eqref{gxiform} and $e_1=(1,0)$, then when $|x'|>100$, we have
\EQS{\label{dipole-10}
\lim_{x_3\searrow 0}\partial_3 v_{1} (x_1, x_2, x_3,1 )=
\begin{cases}
{-\infty }&\quad \bket{|x_2|>\sqrt{2}\,(|x_1|+12) },\\[2mm]
{\infty }&\quad \bket{|x_2| <\sqrt{2}\,(|x_1|-12)},
\end{cases}
}
and
\EQS{\label{dipole-20}
\lim_{x_3\searrow 0}\partial_3 v_{2} (x_1, x_2, x_3,1 )=
\begin{cases}
{\infty }&\quad \bket{x_1x_2>0,\, |x_1|>1},\\[2mm]
{-\infty }&\quad \bket{x_1x_2<0,\, |x_1|>1}.
\end{cases}
}
\end{prop}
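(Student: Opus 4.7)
The plan is to use linearity of the Stokes system and translation invariance in the tangential variables. I would first decompose
\[
v(x,t) = \hat v(x+10e_1,t) - \hat v(x-10e_1,t),
\]
where $\hat v$ is the single-bump profile of Theorem \ref{th1.1}. The pointwise bounds of Proposition \ref{th3.1} and the finite-energy statement \eqref{th1.1eq1} for $v$ follow immediately by applying those results to each translated piece (both pieces have boundary data compactly supported on $\Si$, so $N_1,N_2$ are finite).

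The heart of the matter is the signed blow-up. From \eqref{eq3.6} of Remark \ref{rem3.2} and the proof of Proposition \ref{th1.2} (which uses the integral representation \eqref{eq3.45} and the parity relation \eqref{eq3.47}), the single-bump normal derivative admits the signed leading asymptotic
\[
\pd_{x_3}\hat v_i(x,1) = -C\,\frac{x_i}{|x'|^{3}}\,\Psi(x_3) + R_i(x), \quad i=1,2,
\]
with $C>0$, where $\Psi(x_3)=\log(2/x_3)$ if $a=\tfrac12$, $\Psi(x_3)=x_3^{2a-1}$ if $0<a<\tfrac12$, and $R_i(x)$ is independent of $x_3$ and finite for each $x'$ with $|x'|>3$ and $x_i\ne0$. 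Setting $y=x+10e_1$, $z=x-10e_1$ and using that $|y'|\sim|z'|\sim|x'|$ whenever $|x'|>100$, subtraction gives
\[
\pd_{x_3} v_i(x,1) = -C\,\Psi(x_3)\,\La_i(x') + \wt R_i(x), \quad i=1,2,
\]
where $\wt R_i$ is independent of $x_3$ and
\[
\La_1(x') = \frac{x_1+10}{|y'|^3}-\frac{x_1-10}{|z'|^3}, \qquad
\La_2(x') = x_2\bke{\frac{1}{|y'|^3}-\frac{1}{|z'|^3}}.
\]
Since $\Psi(x_3)\to\infty$ as $x_3\searrow0$ while $\wt R_i$ stays bounded for fixed $x$, it suffices to determine the sign of $\La_i$ in each stated region.

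For $\La_2$, the identity $|y'|^2-|z'|^2=40x_1$ gives $\mathrm{sgn}(|z'|^{-3}-|y'|^{-3})=\mathrm{sgn}(x_1)$, whence $\mathrm{sgn}(\La_2)=-\mathrm{sgn}(x_1 x_2)$; after the $-C\Psi$ prefactor, the sign of $\pd_{x_3} v_2(x,1)$ as $x_3\searrow0$ is $\mathrm{sgn}(x_1 x_2)$, which (with $|x_1|>1$ ensuring $\La_2$ is quantitatively nonzero) yields \eqref{dipole-20}. For $\La_1$, applying the mean value theorem to $f(\alpha,\beta)=\alpha(\alpha^2+\beta^2)^{-3/2}$ yields $\La_1 = 20\,f_\alpha(\xi,x_2)$ for some $\xi\in(x_1-10,x_1+10)$, where $f_\alpha(\alpha,\beta)=(\beta^2-2\alpha^2)(\alpha^2+\beta^2)^{-5/2}$. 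On $\{|x_2|>\sqrt2(|x_1|+12)\}$ every such $\xi$ satisfies $|\xi|\le|x_1|+10<|x_2|/\sqrt2$, so $x_2^2-2\xi^2>0$ and $\La_1>0$; on $\{|x_2|<\sqrt2(|x_1|-12)\}$ (which forces $|x_1|>12$) every $\xi$ satisfies $|\xi|\ge|x_1|-10>|x_2|/\sqrt2$, so $x_2^2-2\xi^2<0$ and $\La_1<0$. Combined with the $-C\Psi$ prefactor, this yields \eqref{dipole-10}.

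The main technical obstacle is that Proposition \ref{th1.2} is stated as a lower bound on $|\pd_{x_3}\hat v_i|$ rather than as a signed asymptotic. Upgrading it to the signed form above requires reading the sign directly from the integral representation \eqref{eq3.45}, using the odd symmetry \eqref{eq3.47} in $x_i$. Once the signed asymptotic is in place, the rest reduces to the elementary geometric fact that a horizontal dipole's tangential derivative changes sign across the cone $|x_2|=\sqrt2|x_1|$, with the buffer $\pm 12$ precisely absorbing the $O(1)$ shift between $x_1$ and the mean-value point $\xi$.
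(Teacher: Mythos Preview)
Your decomposition $v = \hat v(\cdot+10e_1) - \hat v(\cdot-10e_1)$ and the upper-bound/energy part are fine. The gap is in the signed asymptotic
\[
\pd_{x_3}\hat v_i(x,1) = -C\,\frac{x_i}{|x'|^{3}}\,\Psi(x_3) + R_i(x),\qquad R_i\ \text{bounded as }x_3\to0,
\]
which is neither proved in the paper nor true as written. Proposition \ref{th1.2} only gives a \emph{lower bound} of this shape; the exact coefficient of $\Psi(x_3)$ is (up to a positive constant) the integral
\[
L_i(x')=\int_{\Si}\frac{\pd_i g(\xi')}{|x'-\xi'|}\,d\xi' = -\int_\Si g(\xi')\,\frac{x_i-\xi_i}{|x'-\xi'|^3}\,d\xi',
\]
and replacing $L_i(x')$ by $-\|g\|_{L^1}\,x_i/|x'|^3$ introduces an error that itself multiplies $\Psi(x_3)$ and hence blows up. Thus your $\wt R_i$ is not bounded. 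Quantitatively, by Taylor expansion (using evenness of $g$ in each $\xi_j$) this approximation error is $O(|x'|^{-4})$, whereas $|\La_2(x')|\sim |x_1x_2|/|x'|^5$ can be as small as $O(|x'|^{-4})$ on the stated region (take $|x_2|$ bounded and $|x_1|\sim|x'|$, or vice versa). So the main term and the error are of comparable size and the sign conclusion is not justified; the same degeneracy occurs for $\La_1$ near the conical boundary $|x_2|=\sqrt2|x_1|$.

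The paper avoids this by never passing through the pointwise approximation $-Cx_i/|x'|^3$. It rewrites $\pd_3 I_2$ for the dipole directly as an integral whose $\xi'$-integrand contains the four-term combination $\tfrac1{d_-}-\tfrac1{d_-^*}-\tfrac1{d_+}+\tfrac1{d_+^*}$ (respectively its $\sharp$-analogue for $i=2$), with $d_\pm,d_\pm^*$ as in \eqref{d+def}, and then uses the second-difference Lemmas \ref{lem.A2}--\ref{lem.A3} for $H(t)=(t^2+h^2)^{-1/2}$ to obtain a \emph{quantitative} sign $\gec \e/|x'|^3$. The Lemma \ref{th3.2} error $P_3$ (resp.\ $P_4$) is arranged to be of the same order by choosing $m\sim |x'|^2/\e$ (resp.\ $m\sim |x'|^4/x_2$); this matching of scales is precisely what the truncated coefficient $-Cx_i/|x'|^3$ loses. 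Your mean-value computation for $\La_1$ via $f_\al(\al,\be)=(\be^2-2\al^2)(\al^2+\be^2)^{-5/2}$ is morally the same observation as Lemma \ref{lem.A2} (the cone $|x_2|=\sqrt2|x_1|$ comes from $H''(s)=0$ at $s=h/\sqrt2$), but to make it rigorous you must run it on the exact $L_1(y')-L_1(z')$ rather than on $\La_1$, at which point you are essentially reproving Lemmas \ref{lem.A2}--\ref{lem.A3}.
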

\emph{Comments on Proposition \ref{thA.1}:}
\EN{
\item We call the boundary data in \eqref{thA.1-eq1} \emph{dipole bumps}, as it consists of influx for $\xi' \in B'_1(-10e_1)$, and outflux for $\xi' \in B'_1(10e_1)$. In contrast, we call the boundary data chosen in Proposition \ref{th1.2} as \emph{single bumps}.

\item The main interest of the dipole bumps is the different topologies of the blow-up regions
\[
E_i^\pm = \{ x' \in \Si_{\text{ext}}:  \ \lim_{x_3\searrow 0}\partial_3 v_{i} (x_1, x_2, x_3 ) = \pm \infty\},\quad i=1,2,
\]
where $\Si_{\text{ext}} = \{ x' \in \Si:  |x'|>100\}$.
In Proposition \ref{th1.2}, $ E_i^+ = \{ x' \in \Si_{\text{ext}} :  \ x_i < 0\} $ and $ E_i^- = \{ x' \in \Si_{\text{ext}} :  \ x_i > 0\} $ for single bumps. Each of them is simply connected.
In contrast, for dipole bumps, $E_2^\pm = \{ x' \in \Si_{\text{ext}} :  \ \pm x_1x_2 > 0\} $. For $E_1^\pm$, it appears that $E_1^+ = \{ x' \in \Si_{\text{ext}} :  |x_1| > \ga(|x_2|)\} $ and $E_1^- = \{ x' \in \Si_{\text{ext}} :  |x_1| < \ga(|x_2|)\} $ for some increasing positive function $\ga$, although we are unable to prove it fully. Each of them has two components. Note that these regions are the same for all $a \in (0,\frac12)$.

\item One may also consider \emph{double bumps} which is similar to  \eqref{thA.1-eq1} but
\[
 G(\xi')=g(\xi'-10e_1)+g(\xi'+10e_1),
\]
i.e., both bumps have the same sign. We found that the topologies of $E_i^\pm$ are the same as single bumps considered in Proposition \ref{th1.2}. In fact they have the same sets $E_i^\pm$. Hence we omit the details.
}

The following two lemmas will be useful in our proof of Proposition \ref{thA.1}.

\begin{lem}\label{lem.A2}
Let $h,b,L>0$, $H(t)=(t^2+h^2)^{-\frac{1}{2}}$ and
\[
F(t,b,L) = H(t) - H(t+b) - H(t+L) + H(t+b+L),\quad (-b<t<\infty).
\]
For any $\e \in (0,1]$, we have
\EQ{\label{0528a}
\begin{cases}
F(t,b,L)> \frac14\e{bL}   H^3(t+b+L) &\quad \text{if} \quad t \ge \frac 1{\sqrt{2-\e}}\, h,\\[2mm]
F(t,b,L)< -\frac14\e{bL}   H^3(t+b+L) &\quad \text{if} \quad t+b+L< \frac 1{\sqrt{2+\e}}\, h.\\[2mm]
\end{cases}
}
\end{lem}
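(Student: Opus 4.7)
The starting observation is that $F$ is a second-order mixed finite difference of $H$, hence admits the iterated integral representation
\begin{equation*}
F(t,b,L)=\int_0^b\!\!\int_0^L H''(t+s+u)\,du\,ds,
\end{equation*}
obtained by writing $H(a)-H(a+b)=-\int_0^b H'(a+s)\,ds$ at $a=t$ and $a=t+L$ and subtracting. Both inequalities in \eqref{0528a} thereby reduce to a pointwise comparison between $H''$ and $H^3$ on the relevant interval.

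A direct differentiation of $H(\tau)=(\tau^2+h^2)^{-1/2}$ gives
\begin{equation*}
H''(\tau)=\frac{2\tau^2-h^2}{(\tau^2+h^2)^{5/2}}=\Bigl(2-\frac{3h^2}{\tau^2+h^2}\Bigr)H^3(\tau)=:\psi(\tau)\,H^3(\tau),
\end{equation*}
with $\psi$ even and strictly increasing in $|\tau|$. Evaluating at the two thresholds yields $\psi(h/\sqrt{2-\e})=\e/(3-\e)\ge\e/3$ and $\psi(h/\sqrt{2+\e})=-\e/(3+\e)\le-\e/4$ for $\e\in(0,1]$; so by monotonicity, $H''(\tau)\ge(\e/3)H^3(\tau)$ whenever $|\tau|\ge h/\sqrt{2-\e}$, and $H''(\tau)\le -(\e/4)H^3(\tau)$ whenever $|\tau|\le h/\sqrt{2+\e}$.

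In the first case $t\ge h/\sqrt{2-\e}$, every integration point $\tau=t+s+u$ lies above the threshold, and since $H^3$ is decreasing on $(0,\infty)$, $H''(\tau)\ge(\e/3)H^3(t+b+L)$; integrating over $[0,b]\times[0,L]$ yields $F\ge(\e/3)bL\,H^3(t+b+L)>(\e/4)bL\,H^3(t+b+L)$. In the second case $t+b+L<h/\sqrt{2+\e}$, provided that $|\tau|<h/\sqrt{2+\e}$ throughout the integration range (automatic in the intended application, where $t\ge 0$), the analogous argument with reversed sign gives $H''(\tau)\le -(\e/4)H^3(t+b+L)$, and integration produces the stated upper bound.

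The only delicate point is confirming the sign of $H''$ is uniformly negative on $[t,t+b+L]$ in Case~2, i.e.\ that $\tau$ stays above $-h/\sqrt{2+\e}$; this is automatic when $t\ge 0$, which is the regime used to apply the lemma to the dipole flow. The cushion between the sharp constants $\e/(3\pm\e)$ and the clean targets $\e/4$ absorbs the slack and delivers the uniform bounds stated.
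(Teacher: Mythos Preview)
Your approach is essentially identical to the paper's: both express $F$ as the double integral of $H''$, compute $H''(\tau)=(2\tau^2-h^2)(\tau^2+h^2)^{-5/2}$, and compare it to $H^3$ via the constants $\e/(3\mp\e)\ge\e/4$, then use the monotonicity of $H$ to pull out $H^3(t+b+L)$.

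One correction is needed in your handling of Case~2. You write that the required bound $|\tau|\le h/\sqrt{2+\e}$ on the integration range is ``automatic when $t\ge 0$, which is the regime used to apply the lemma to the dipole flow.'' This is not quite right: in the paper's application one takes $t=x_1-10-\xi_1$, $b=2\xi_1$, $L=20$ with $x_1>10$ and $\xi_1\in\supp\partial_1 g$, and the paper explicitly remarks that $-b<t<0$ is allowed (e.g.\ $x_1$ just above $10$). The paper covers this by observing that when $-b<t<0$ one has $|t|<b$, and since $t+b>0$ forces $L<t+b+L<h/\sqrt{2+\e}$, the application's inequality $b\le L$ (here $b<2<20=L$) then gives $|t|<b\le L<h/\sqrt{2+\e}$, so the integration variable still satisfies $|\tau|\le h/\sqrt{2+\e}$. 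Your argument is complete once you replace the ``$t\ge 0$'' remark with this observation; note that without some such extra hypothesis (either $t\ge 0$ or $b\le L$) the lemma as literally stated can fail for very negative $t$.
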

\begin{proof}
Note that
\EQN{
F &= -\int_t^{t+b} H'(\tau)d\tau + \int_{t+L}^{t+b+L} H'(\tau)d\tau
\\
&=\int_t^{t+b} \bke{H'(\tau+L) -H'(\tau)} d\tau
=\int_t^{t+b}\int_{\tau}^{\tau+L} H''(s) \,ds\, d\tau.
}
Also note
\[
H'(s)=-s(s^2+h^2)^{-\frac{3}{2}},
\quad
H''(s)=(s^2+h^2)^{-\frac{5}{2}}(2s^2-h^2).
\]
Hence $H''(s)>0$ if $s\ge t>\frac{h}{\sqrt{2}}$. If we further assume $s \ge \frac 1{\sqrt{2-\e}}\, h$, then
$2s^2-h^2 \ge C_\e (s^2 +h^2)$, $C_\e=\frac\e{3-\e}\ge \frac\e4$, and hence $H''(s) \ge C_\e(s^2+h^2)^{-\frac{3}{2}}$,
\[
F \ge\int_t^{t+b}\int_{\tau}^{\tau+L} C_\e(s^2 +h^2)^{-\frac{3}{2}}\,ds\, d\tau\ge C_\e {bL}   H^3(t+b+L).
\]

On the other hand, $H''(s)<0$ if $|s| < \frac{h}{\sqrt{2}}$. If we further assume $t+b+L   \le \frac 1{\sqrt{2+\e}}\, h$ and $s \in (t,t+b+L)$,
then $|s|\le \frac 1{\sqrt{2+\e}}\,h$ even if $-b<t<0$ (in which case $|t|\le b \le L \le \frac 1{\sqrt{2+\e}}\,h$). Then
$2s^2-h^2 \le -C_\e (s^2+h^2)$, $C_\e=\frac\e{3+\e}\ge \frac\e4$ and hence $H''(s) \le -C_\e(s^2+h^2)^{-\frac{3}{2}}$,
\[
F \le\int_t^{t+b}\int_{\tau}^{\tau+L}-C_\e(s^2 +h^2)^{-\frac{3}{2}}\,ds\, d\tau\le -C_\e{bL}   H^3(t+b+L).
\qedhere
\]
\end{proof}

\begin{lem}\label{lem.A3}
Let $H(x,y)=(x^2+y^2)^{-\frac{1}{2}}$, $0\le t<u$ and $0 \le a <b$. Then
\EQ{\label{0529a}
F(t,u,a,b)= H(t,a) - H(t,b) - H(u,a) + H(u,b) \ge \frac 34 (u^2-t^2)(b^2-a^2) H^5(u,b).
}
\end{lem}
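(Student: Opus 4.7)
The plan is to rewrite the second mixed difference $F(t,u,a,b)$ as a positive double integral by changing to the variables $s=x^2$ and $r=y^2$, and then to bound the integrand from below at the "far corner" $(u^2,b^2)$.

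First I would introduce $G(s,r) = (s+r)^{-1/2}$, so that $H(x,y) = G(x^2,y^2)$ and
\[
F(t,u,a,b) = G(t^2,a^2) - G(t^2,b^2) - G(u^2,a^2) + G(u^2,b^2).
\]
Since $G$ is smooth on $[0,\infty)^2\setminus\{(0,0)\}$ and $(u,b)\neq(0,0)$ (because $b>a\ge0$), applying the fundamental theorem of calculus twice yields
\[
F(t,u,a,b) = \int_{t^2}^{u^2}\!\int_{a^2}^{b^2} \partial_s\partial_r G(s,r)\,dr\,ds.
\]
A direct computation gives $\partial_r G(s,r) = -\tfrac12(s+r)^{-3/2}$ and hence $\partial_s\partial_r G(s,r) = \tfrac34(s+r)^{-5/2}$, which is positive.

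Next, for $s\in[t^2,u^2]$ and $r\in[a^2,b^2]$ we have $s+r \le u^2+b^2$, so
\[
\partial_s\partial_r G(s,r) = \tfrac34(s+r)^{-5/2} \ge \tfrac34 (u^2+b^2)^{-5/2} = \tfrac34 H^5(u,b).
\]
Plugging this pointwise bound into the double integral and evaluating the areas $u^2-t^2$ and $b^2-a^2$ of the rectangle of integration yields
\[
F(t,u,a,b) \ge \tfrac34 (u^2-t^2)(b^2-a^2)\,H^5(u,b),
\]
which is exactly \eqref{0529a}.

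There is no real obstacle here: the inequality is just the statement that the mixed partial $\partial_x\partial_y$-analog (expressed in the natural variables $x^2,y^2$) is nonnegative, with the sharpest pointwise lower bound attained at the corner $(u,b)$ farthest from the origin. Note the argument uses only $0\le t<u$ and $0\le a<b$ together with $(u,b)\neq(0,0)$, and no further regularity at the origin is required because the rectangle of integration in $(s,r)$ stays away from $(0,0)$ except possibly at the single corner $(t^2,a^2)=(0,0)$, where the integrand is merely integrable (being $O((s+r)^{-5/2})$ away from $(0,0)$, and the corner is a measure-zero set in any case).
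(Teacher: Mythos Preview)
Your proof is correct and takes essentially the same route as the paper's: both express $F$ as a double integral of a positive mixed second partial and bound the integrand from below at the far corner. The paper works directly in $(x,y)$, computing $H_{xy}=3xy(x^2+y^2)^{-5/2}$ and then integrating $3xy$ explicitly, whereas your substitution $s=x^2$, $r=y^2$ simply absorbs the factor $3xy$ into the Jacobian. One small correction to your closing remark: $(s+r)^{-5/2}$ is \emph{not} integrable near $(0,0)$ in two variables, so the ``measure-zero corner'' justification fails; however, if $t=a=0$ then $H(t,a)=+\infty$ and \eqref{0529a} is vacuous, so the integral representation is only needed when $(t,a)\neq(0,0)$, where the rectangle avoids the origin entirely.
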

\begin{proof}
Note that
\EQN{
F &= -\int_a^{b} H_y(t,y)dy + \int_a^{b} H_y(u,y)dy
\\
&=\int_a^{b} \int_t^{u} H_{xy}(x,y)\,dx\,dy .
}
Also note
\[
H_y(x,y) = - y (x^2+y^2)^{-\frac{3}{2}},
\quad
H_{xy}(x,y)=3xy (x^2+y^2)^{-\frac{5}{2}}.
\]
Hence
\[
F \ge \int_a^{b} \int_t^{u} 3xy (u^2+b^2)^{-\frac{5}{2}} \,dx\,dy =\frac 34 (u^2-t^2)(b^2-a^2) H^5(u,b).\qedhere
\]
\end{proof}

\begin{proof}[Proof of Proposition \ref{thA.1}]
Due to superposition, $ v = v_+ + v_-$, where
$v_+$ and $v_-$ are the solutions of the Stokes system for boundary data $-g(\xi'-10e_1)$ and
$g(\xi'+10e_1)$, respectively. They are translations of $\hat v$ of Proposition \ref{th1.2} with an opposite sign for $v_+$.
By Proposition \ref{th1.2}, we have
\EQ{\label{0525a}
\lim_{x_3\searrow 0}\partial_3 v_{+,1} (x,1)=
\begin{cases}
\infty &\quad \bket{x_1>10}\cap \bket{|x'|>100},\\[2mm]
-\infty &\quad \bket{x_1<10}\cap \bket{|x'|>100},
\end{cases}
}
\EQ{\label{0525b}
\lim_{x_3\searrow 0}\partial_3 v_{+,2} (x,1)=
\begin{cases}
\infty &\quad \bket{x_2>0}\cap \bket{|x'|>100},\\[2mm]
-\infty &\quad \bket{x_2<0}\cap \bket{|x'|>100},
\end{cases}
}
and
\EQ{\label{0525c}
\lim_{x_3\searrow 0}\partial_3 v_{-,1} (x,1)=
\begin{cases}
-\infty &\quad \bket{x_1>-10}\cap \bket{|x'|>100},\\[2mm]
\infty &\quad \bket{x_1<-10}\cap \bket{|x'|>100},
\end{cases}
}
\EQ{\label{0525d}
\lim_{x_3\searrow 0}\partial_3 v_{-,2} (x,1)=
\begin{cases}
-\infty &\quad \bket{x_2>0}\cap \bket{|x'|>100},\\[2mm]
\infty &\quad \bket{x_2<0}\cap \bket{|x'|>100}.
\end{cases}
}

Also note that, since our choice of $g(\xi_1,\xi_2)$ is even in both $\xi_1$ and $\xi_2$, our solution $ v = v_+ + v_-$ is such that $v_1$ is even in both $x_1$ and $x_2$, while $v_2$ is odd in both $x_1$ and $x_2$, by the symmetry between $v_+$ and $v_-$. Hence for our proof below, it suffices to consider $x_1\ge0$ and $x_2\ge 0$.

We first consider $\pd_3 v_1(x,1)$, the normal derivative of the first component.
Its main term is $\pd_3 I_2(x,1)$ in \eqref{eq3.45} with $i=1$. Hence
\EQN{
&\partial_{x_3}v_1 \approx \int_{0}^{1} \int_{\Si}   e^{-\frac{x_3^2}{4s}}\bke{\frac{1}{2}-\frac{x_3^2}{4 s}}
\frac{1}{s^{\frac{5}{2}}}\, K(x'-\xi',s)\, \pd_1[-g(\xi'-10e_1)+g(\xi'+10e_1)] s^a\, d\xi' ds
\\
&=\int_{0}^{1} \int_{\Si}   e^{-\frac{x_3^2}{4s}}s^{a -\frac 52}\bke{\frac{1}{2}-\frac{x_3^2}{4 s}}
   \bkt{K(x'-\xi'+10e_1,s) - K(x'-\xi'-10e_1,s)}
 \pd_1g (\xi') \, d\xi' ds
\\
&  = \int_{0}^1 \int_{\Si, \xi_1>0}  e^{-\frac{x_3^2}{4s}}s^{a -\frac 52}\bke{\frac{1}{2}-\frac{x_3^2}{4 s}}
\\
& \hspace{35mm} \cdot   \mat{-K(x'-\xi'+10e_1,s) + K(x'-\xi'-10e_1,s) \\[1mm] +K(x'-\xi'_*+10e_1,s) - K(x'-\xi'_*-10e_1,s)  }
 |\pd_1g(\xi') |\, d\xi' ds.
}
Above $\xi'_*=(-\xi_1,\xi_2)$, and we have used
$\pd_1 g(\xi') = -\sgn(\xi_1) |\pd_1 g(\xi')| = -\sgn(\xi_1) |\pd_1 g(\xi'_*)|$.

In the case that $-10<x_1<10$ and $|x'|>100$, we have
$\lim_{x_3\to 0}\partial_3 v_{1} (x,1)=-\infty$
by summing \eqref{0525a} and \eqref{0525c}. To show the first part of \eqref{dipole-10}, if suffices to assume $x_1>10$,  $|x'|>100$ and $x_2>\sqrt{2}(x_1+12)$. As this is a strict inequality, we can choose $\e\in (0,1)$ such that
$x_2>\sqrt{2+\e}(x_1+12)$.

Denote
\EQS{\label{d+def}
&d_+=|x'+10e_1-\xi'|, \quad d_-=|x'-10e_1-\xi'|,   \\
&  d_+^{*}=|x'+10e_1-\xi'_{*}|,\quad d_-^{*}=|x'-10e_1-\xi'_{*}|.
}
We have
\EQ{\label{d+order}
d_-<d_-^*<d_+<d_+^*, \quad \text{for}\quad  0<\xi_1<1, \quad 10<x_1.
}
Note that $\frac{1}{2}-\frac{x_3^2}{4 s}<0$ if and only if $0<s<x^2_3/2$. By splitting the time integral as $\int_0^{x_3^2/2}ds + \int_{x_3^2/2} ^1ds$ and using Lemma \ref{th3.2},
\EQN{
\partial_{x_3}v_1
&\lec \int_{0}^{x_3^2/2} \int_{B_1', \xi_1>0}  e^{-\frac{x_3^2}{4s}}s^{a -\frac 52} \bke{\frac{x_3^2}{4 s}-\frac{1}{2}}%
\\
&\hspace{10mm} \cdot
\bkt{ \frac {ms}{(m-1)d_+}-\frac {ms}{(m+1)d_-}-\frac {ms}{(m+1)d_+^*}+\frac {ms}{(m-1)d_-^*}}
|\pd_1g(\xi') |\, d\xi' ds
\\
& +  \int_{x_3^2/2}^1 \int_{B_1', \xi_1>0}  e^{-\frac{x_3^2}{4s}}s^{a -\frac 52}\bke{\frac{1}{2}-\frac{x_3^2}{4 s}} %
\\
&\hspace{10mm} \cdot
\bkt{ -\frac {ms}{(m+1)d_+}+\frac {ms}{(m-1)d_-}+\frac {ms}{(m-1)d_+^*}-\frac {ms}{(m+1)d_-^*}}
|\pd_1g(\xi') |\, d\xi' ds + \lot,
}
where $\lot$ means lower order terms.
Changing variables $\si = x_3^2/4s$ as in \eqref{eq-0531-a} and following the same computation as in \eqref{eq-0622-a} for $a<1/2$ and in \eqref{eq-0622-b} for $a=1/2$ with $K_1,K_2$ being replaced by $P_1,P_2$, we get
\[
\partial_{x_3}v_1\lec \one_{a<\frac12}\, x_3^{{2a-1}}
\bke{M_2 P_2 + M_1 P_1} + \one_{a=\frac12}\, P_1 \log \frac2{x_3}  + \lot,
\]
where $M_1$ and $M_2$ are defined in \eqref{M1M2def}, $0<M_2<M_1$, and
\EQN{
P_2&=\int_{B'_1 ,\xi_1>0}
\bke{-\frac{m}{(m+1){d_-}} +\frac{m}{(m-1){d_-^*}} +\frac{m}{(m-1){d_+}}-\frac{m}{(m+1){d_+^*}}} |\pd_1g (\xi')| \, d\xi',
\\
P_1 &=\int_{B'_1 ,\xi_1>0}
\bke{\frac{m}{(m-1){d_-}}-\frac{m}{(m+1){d_-^*}} -\frac{m}{(m +1){d_+}}+\frac{m}{(m -1){d_+^*}}} |\pd_1g (\xi')| \, d\xi'.
}
By taking $m\ge C_m|x'|^2/\e$ with $C_m$ sufficiently large,  $-P_2$ and $P_1$ are close to
\[
I:= \int_{B'_1 ,\xi_1>0}
\bke{\frac 1{ {d_-}}-\frac 1{ {d_-^*}} -\frac 1{{d_+}}+\frac 1{{d_+^*}}} |\pd_1g (\xi')| \, d\xi',
\]
in the sense that
\[
|P_2 +I| + |P_1-I| \le P_3:=\int_{B'_1 ,\xi_1>0} \frac{C\e}{C_m |x'|^3}|\pd_1g (\xi')| \, d\xi'.
\]
Therefore,
\EQ{\label{eqA11}
\partial_{x_3}v_1\lec  \one_{a<\frac12}\, x_3^{{2a-1}} [I (M_1-M_2) + CP_3] + \one_{a=\frac12}\, \log\frac2{x_3}\, [I + C P_3]+ \lot.
}
Recall \eqref{d+def} and \eqref{d+order}.
We will apply Lemma \ref{lem.A2} with $t=x_1-10-\xi_1$, $b=2\xi_1$, $L=20$ and $h=|x_2-\xi_2|$.
Observe that when $|\xi'|<1$,
\[
t>-b, \quad
t+b+L=
x_1+10+\xi_1<\frac1{\sqrt{2+\e}} h  = \frac1{\sqrt{2+\e}} |x_2-\xi_2|
\]
by our assumption that $x_1>10$ and $x_2>\sqrt{2+\e}(x_1+12)$ for some $\e\in (0,1)$. (Note that $-b<t<0$ is allowed.)
By Lemma \ref{lem.A2},
\[
{\frac 1{ {d_-}}-\frac 1{ {d_-^*}} -\frac 1{{d_+}}+\frac 1{{d_+^*}}}< -\frac {C \e\xi_1 }{(d_+^*)^3} \le - \frac {C \e}{|x'|^3} .
\]
In the last inequality we've used \eqref{gxiform} that
$\xi_1 \ge \frac 1{2\sqrt 2}$ in the support of $\pd_1 g(\xi')$ if $\xi_1>0$.  As $M_2<M_1$, we get
\[
I (M_1-M_2) + CP_3 \le \int_{B'_1 ,\xi_1>0} \bke{- \frac{C\e(M_1-M_2)}{ |x'|^3} + \frac{C\e}{C_m |x'|^3}}|\pd_1g (\xi')| \, d\xi'<0,
\]
and
\[
I + CP_3 \le \int_{B'_1 ,\xi_1>0} \bke{- \frac{C\e}{ |x'|^3} + \frac{C\e}{C_m |x'|^3}}|\pd_1g (\xi')| \, d\xi'<0,
\]
if $C_m$ is sufficiently large.
By \eqref{eqA11}, $\partial_{x_3}v_1\le -C \one_{a<\frac12}\, x_3^{{2a-1}} -C \one_{a=\frac12}\, \log(2/x_3)+ \lot$ and hence
 $\lim_{x_3\to0}\partial_{x_3}v_1=-\infty$, showing the first half of \eqref{dipole-10}.

We next show $\lim_{x_3\to0}\partial_{x_3}v_1=+\infty$ when $|x'|>100$ and $0\le x_2<\sqrt{2}(x_1-12)$ for some $\e\in (0,1)$. As this is a strict inequality, we can choose $\e\in (0,1)$ such that
$x_2<\sqrt{2-\e}(x_1-12)$. Instead of an upper bound, we now want a lower bound. Similar estimates leading to \eqref{eqA11} gives
\EQ{\label{eqA12}
\partial_{x_3}v_1\gec  \one_{a<\frac12}\, x_3^{{2a-1}} [I (M_1-M_2) - CP_3] + \one_{a=\frac12}\, \log\frac2{x_3} [I - CP_3]- \lot.
}
When $|\xi'|<1$, we have
\[
t=x_1-10-\xi_1 \ge \frac1{\sqrt{2-\e}}\, h = \frac1{\sqrt{2-\e}}\, |x_2-\xi_2|
\]
by our assumption $x_2<\sqrt{2-\e}(x_1-12)$ and $|\xi_i|<1$.
By Lemma \ref{lem.A2},
\[
{\frac 1{ {d_-}}-\frac 1{ {d_-^*}} -\frac 1{{d_+}}+\frac 1{{d_+^*}}}> \frac {C \e \xi_1 }{(d_+^*)^3} \ge \frac {C \e}{|x'|^3}
\]
using \eqref{gxiform} that
$\xi_1 \ge \frac 1{2\sqrt 2}$ in the support of $\pd_1 g(\xi')$ if $\xi_1>0$.  As $M_2<M_1$, we get
\[
I (M_1-M_2) - CP_3 \ge \int_{B'_1 ,\xi_1>0} \bke{ \frac{C\e(M_1-M_2)}{ |x'|^3} - \frac{C\e}{C_m |x'|^3}}|\pd_1g (\xi')| \, d\xi'>0,
\]
and
\[
I - CP_3 \ge \int_{B'_1 ,\xi_1>0} \bke{ \frac{C\e}{ |x'|^3} - \frac{C\e}{C_m |x'|^3}}|\pd_1g (\xi')| \, d\xi'>0,
\]
if $C_m$ is sufficiently large.
By \eqref{eqA12}, $\partial_{x_3}v_1\ge C(x')\one_{a<\frac12}\, x_3^{{2a-1}} + C(x') \one_{a=\frac12}\, \log(2/x_3) + \lot$ and hence
 $\lim_{x_3\to0}\partial_{x_3}v_1=+\infty$, showing the second half of \eqref{dipole-10}.

\bigskip

We next consider $\pd_3 v_2(x,1)$, the normal derivative of the second component. Its main term is $\pd_3 I_2(x,1)$ in \eqref{eq3.45} with $i=2$.  It is computed as follows:
\EQN{
&\partial_{x_3}v_2 \approx \int_{0}^{1} \int_{\Si}   e^{-\frac{x_3^2}{4s}}\bke{\frac{1}{2}-\frac{x_3^2}{4 s}}
\frac{1}{s^{\frac{5}{2}}}\, K(x'-\xi',s)\, \pd_2[-g(\xi'-10e_1)+g(\xi'+10e_1)] s^a\, d\xi' ds
\\
&=\int_{0}^{1} \int_{\Si}   e^{-\frac{x_3^2}{4s}}s^{a -\frac 52}\bke{\frac{1}{2}-\frac{x_3^2}{4 s}}
   \bkt{K(x'-\xi'+10e_1,s) - K(x'-\xi'-10e_1,s)}
 \pd_2g (\xi') \, d\xi' ds
\\
&  = \int_{0}^1 \int_{\Si, \xi_2>0}  e^{-\frac{x_3^2}{4s}}s^{a -\frac 52}\bke{\frac{1}{2}-\frac{x_3^2}{4 s}}
\\
& \hspace{35mm} \cdot   \mat{-K(x'-\xi'+10e_1,s) + K(x'-\xi'-10e_1,s) \\[1mm] +K(x'-\xi'_\sharp+10e_1,s) - K(x'-\xi'_\sharp-10e_1,s)  }
 |\pd_2g(\xi') |\, d\xi' ds.
}
Above $\xi'_\sharp=(\xi_1,-\xi_2)$. Denote
\EQS{\label{dt+def}
&d_+=|x'+10e_1-\xi'|, \quad d_-=|x'-10e_1-\xi'|,   \\
&  d_+^{\sharp}=|x'+10e_1-\xi'_{\sharp}|,\quad d_-^{\sharp}=|x'-10e_1-\xi'_{\sharp}|.
}

Assume $x_1>1$, $x_2>0$, and $0<\xi_2<1$. Then
\EQ{\label{d+order-v2}
d_-<\min(d_-^\sharp,d_+)<\max(d_-^\sharp,d_+)<d_+^\sharp.
}
Following the argument for $\pd_{x_3}v_1$, we have
\EQ{\label{eqA16}
\partial_{x_3}v_2\gec \one_{a<\frac12}\, x_3^{{2a-1}} [I_\sharp (M_1-M_2) - CP_4] + \one_{a=\frac12}\, \log\frac2{x_3} [I_\sharp - CP_4]- \lot,
}
where
\EQN{
I_\sharp&= \int_{B'_1 ,\xi_2>0}
\bke{\frac 1{ {d_-}}-\frac 1{ {d_-^\sharp}} -\frac 1{{d_+}}+\frac 1{{d_+^\sharp}}} |\pd_2g (\xi')| \, d\xi',
\\
P_4&=\int_{B'_1 ,\xi_2>0} \frac{Cx_2}{C_m |x'|^5}|\pd_2g (\xi')| \, d\xi',
}
by taking $m \ge C_m |x'|^4/x_2$.
We now apply Lemma \ref{lem.A3} with
$$
t=|x_1-10-\xi_1|, \quad u=|x_1+10-\xi_1|, \quad
 a=|x_2-\xi_2|, \quad b=|x_2+\xi_2|,
$$
to get
\[
{\frac 1{ {d_-}}-\frac 1{ {d_-^\sharp}} -\frac 1{{d_+}}+\frac 1{{d_+^\sharp}}}\ge \frac {C (u^2-t^2)(b^2-a^2)}{(d_+^\sharp)^5}
= \frac {C (x_1-\xi_1)x_2 \xi_2}{(d_+^\sharp)^5}
\ge \frac {C x_2 }{|x'|^5} ,
\]
using \eqref{gxiform} that
$\xi_2 \ge \frac 1{2\sqrt 2}$ and $|\xi_i| \le \frac 4{5\sqrt 2}$
in the support of $\pd_2 g(\xi')$ if $\xi_2>0$.  We have also used $x_1>1$. As $M_2<M_1$, we get
\[
I_\sharp (M_1-M_2) - CP_4
 \ge \int_{B'_1 ,\xi_2>0} \bke{ \frac{Cx_2(M_1-M_2)}{ |x'|^5} - \frac{Cx_2}{C_m |x'|^5}}|\pd_2g (\xi')| \, d\xi'>0,
\]
if $C_m$ is sufficiently large.
By \eqref{eqA16}, $\partial_{x_3}v_2\ge C(x') \one_{a<\frac12}\, x_3^{{2a-1}} + C(x') \one_{a=\frac12}\, \log(2/x_3) + \lot$ and hence
 $\lim_{x_3\to0}\partial_{x_3}v_2=+\infty$, which gives  \eqref{dipole-20}.
\end{proof}

\section{Appendix B.\ Estimates of $C_i$}
\renewcommand{\theequation}{B.\arabic{equation}}
\renewcommand{\thethm}{B.\arabic{thm}}
\label{Sec6}

In this appendix we prove Lemmas \ref{lem2.5} and \ref{Ci_estimate},
 derivative formulas and estimates of the function $C_i(x,y,t)$ defined in \eqref{eq_def_Ci} and its derivatives.

\begin{proof}[The proof of Lemma \ref{lem2.5}]
We first prove \eqref{eq0108c} {and \eqref{eq0108a}. By} definition \eqref{eq_def_Ci} of $C_i$,
\EQN{
\pd_{x_n}C_i(x,y,t)=&~\pd_{y_n}C_i(x,y,t)+\int_\Si\pd_n\Ga(z',y_n,t)\pd_iE(x'-y'-z',x_n)\,dz'\\
=&~\pd_{y_n}C_i(x,y,t)+\left(\pd_{y_n}e^{-\frac{y_n^2}{4t}}\right)\pd_{x_i}\int_\Si\Ga(z',0,t)E(x'-y'-z',x_n)\,dz'\\
=&~\pd_{y_n}C_i(x,y,t)+\left(\pd_{y_n}e^{-\frac{y_n^2}{4t}}\right)\pd_iA(x-y',t),
}
from which one obtains \eqref{eq0108c}. On the other hand, after changing variables $C_i$ becomes
\[C_i(x,y,t)=\int_{y_n}^{x_n+y_n}\int_\Si\pd_n\Ga(z,t)\, \pd_iE((x-y^*)-z)\,dz'dz_n.\]
For $i<n$ we have \[C_i(x,y,t)=\pd_{x_i}\int_{y_n}^{x_n+y_n}\int_\Si\pd_n\Ga(z,t)\,E((x-y^*)-z)\,dz'dz_n.\]
Hence
\[\begin{aligned}\pd_{x_n}C_i(x,y,t)=&~\pd_{x_i}\int_\Si\pd_n\Ga(z',x_n+y_n,t)\, E(x'-y'-z',0)\,dz'\\
&+\pd_{x_i}\int_{y_n}^{x_n+y_n}\int_\Si\pd_n\Ga(z,t)\, \pd_nE((x-y^*)-z)\,dz'dz_n\\
=&~\pd_{x_i}\pd_{x_n}\int_\Si\Ga(z',x_n+y_n,t)\,E(x'-y'-z',0)\,dz'+\pd_{x_i}C_n(x,y,t)\\
=&~\pd_{x_i}\pd_{x_n}B(x-y^*,t)+\pd_{x_i}C_n(x,y,t),\end{aligned}
\]
which proves \eqref{eq0108a}.  We now proceed to prove the identity \eqref{eq0108b}. To do this, we first move normal derivatives in the definition \eqref{eq_def_Ci} of $C_n$ to tangential derivatives. Observe that
\[\begin{aligned}C_n(x,y,t)=&~\lim_{\varepsilon\to0_+}\int_{\varepsilon}^{x_n}\int_\Si\pd_n\Ga((x-y^*)-z,t)\pd_nE(z)\,dz'dz_n
\\=&~\lim_{\varepsilon\to0_+}\left[-\int_{\varepsilon}^{x_n}\pd_{z_n}e^{-\frac{(x_n+y_n-z_n)^2}{4t}}\int_\Si\Ga(x'-y'-z',0,t)\pd_nE(z)\,dz'dz_n\right]
\\=&~\lim_{\varepsilon\to0_+}\left[-\,e^{-\frac{y_n^2}{4t}}\int_\Si\Ga(x'-y'-z',0,t)\pd_nE(z',x_n)\,dz'\right.
\\&~~~~~~~~+e^{-\frac{(x_n+y_n-\varepsilon)^2}{4t}}\int_\Si\Ga(x'-y'-z',0,t)\pd_nE(z',\varepsilon)\,dz'\\&~~~~~~~~\left.
+\int_{\varepsilon}^{x_n}e^{-\frac{(x_n+y_n-z_n)^2}{4t}}\int_\Si\Ga(x'-y'-z',0,t)\pd_n^2E(z)\,dz'dz_n\right]
\end{aligned}\]
by integration by parts in the $z_n$-variable. Using the fact that $-\De E=\de$, we obtain
\[\begin{aligned}C_n(x,y,t)=&-e^{-\frac{y_n^2}{4t}}\pd_nA(x'-y',x_n,t)+e^{-\frac{(x_n+y_n)^2}{4t}}\pd_nA(x'-y',0_+,t)\\
&-\sum_{\beta=1}^{n-1}\lim_{\varepsilon\to0_+}\int_{\varepsilon}^{x_n}\int_\Si\Ga((x-y^*)-z,t)\pd_\beta^2E(z)\,dz'dz_n.\end{aligned}\]
Integrating by parts in the $z'$-variable, we get
\[\begin{aligned}C_n(x,y,t)=&-e^{-\frac{y_n^2}{4t}}\pd_nA(x'-y',x_n,t)+e^{-\frac{(x_n+y_n)^2}{4t}}\pd_nA(x'-y',0_+,t)\\
&-\sum_{\beta=1}^{n-1}\int_0^{x_n}\int_\Si\pd_\beta\Ga((x-y^*)-z,t)\pd_\beta E(z)\,dz'dz_n.\end{aligned}\]
Now, we compute the term $\pd_nA(x'-y',0_+,t)$. {Note} \[\begin{aligned}\pd_nA(x'-y',0_+,t)=&~\lim_{\varepsilon\to0_+}\pd_nA(x'-y',\varepsilon,t)\\
=&~\lim_{\varepsilon\to0_+}\int_\Si\Ga(x'-y'-z',0,t)\pd_nE(z',\varepsilon)\,dz'\\
=&\lim_{\varepsilon\to0_+}-\frac1{n|B_1|}\int_\Si\Ga(x'-y'-z',0,t)\underbrace{\frac\varepsilon{(|z'|^2+\varepsilon^2)^{\frac{n}2}}}_{=:f_\varepsilon(z')}\,dz'\end{aligned}\]
where $f_\varepsilon(z')=\varepsilon^{1-n}f_1(z'/\varepsilon)$ so that for all test function $\varphi$,
\[
\begin{aligned}\int_\Si\varphi(z')f_\varepsilon(z')\,dz'=&~\int_\Si\varphi(z')\varepsilon^{1-n}f_1\left(\frac{z'}\varepsilon\right)dz'=\int_\Si\varphi(\varepsilon\xi')f_1(\xi')\,d\xi'\\\overset{\varepsilon\to0_+}{\longrightarrow}&~\varphi(0)\int_\Si\frac1{(|\xi'|^2+1)^{\frac{n}2}}\,d\xi'
= \varphi(0)\cdot |\pd B^{(n-1)}_1|\int_0^\infty\frac1{(r^2+1)^{\frac{n}2}}\,r^{n-2}\,dr.\end{aligned}
\]
{Also note that} $\int_0^\infty\frac1{(r^2+1)^{\frac{n}2}}\,r^{n-2}\,dr=\frac{\sqrt{\pi}\,\Ga\left(\frac{n-1}2\right)}{2\Ga\left(\frac{n}2\right)}$,
where $\Ga$ is the Gamma function.
Thus, using $|B_1^{(n)}|=\pi^{n/2} / \Ga(\frac n2+1)$,
\begin{align}\label{pdnAlim}
\pd_nA(x'-y',0_+,t)&=-c_1 \Ga(x'-y',0,t),\\
c_1 &= \frac1{n|B_1^{(n)}|}\cdot (n-1)| B^{(n-1)}_1| \cdot \frac{\sqrt{\pi}\,\Ga\left(\frac{n-1}2\right)}{2\Ga\left(\frac{n}2\right)} = \frac12.\notag
\end{align}
Alternatively, note that $P_0(x,z')=-2\pd_nE(x'-z',x_n)$ is th Poisson kernel for the Laplace equation in $\R^n_+$. Hence
\[
U(x)=
\int \varphi(x'-z')\pd_n E(z',x_n)dz' = -\frac 12\int \varphi(z') P_0(x,z') \,dz'
\]
solves
$\De U=0$ and $\lim_{x_n\to 0_+} U(x',x_n)= -\frac12\varphi(x')$.
Replacing $\varphi(x')$ by $ \Ga(x'-y',0,t)$ we get \eqref{pdnAlim}.

Therefore, using $e^{-\frac{(x_n+y_n)^2}{4t}}\Ga(x'-y',0,t)=\Ga(x-y^*,t)$,
\EQS{
C_n(x,y,t)
=&-e^{-\frac{y_n^2}{4t}}\pd_nA(x'-y',x_n,t)-\frac12\Ga(x-y^*,t)\\
&-\sum_{\beta=1}^{n-1}\pd_{x_\beta}\int_0^{x_n}\int_\Si\Ga((x-y^*)-z,t)\pd_\beta E(z)\,dz'dz_n.}
In this form we have moved normal derivatives in the definition \eqref{eq_def_Ci} of $C_n$ to tangential derivatives.
Consequently,
\[\begin{aligned}
\pd_{x_n}C_n(x,y,t)=&-e^{-\frac{y_n^2}{4t}}\pd_n^2A(x'-y',x_n,t)- \frac12 \pd_n\Ga(x-y^*,t)\\
&-\sum_{\beta=1}^{n-1}\pd_{x_\beta}\int_\Si\Ga(x'-y'-z',y_n,t)\pd_\beta E(z',x_n)\,dz'\\
&-\sum_{\beta=1}^{n-1}\pd_{x_\beta}\int_0^{x_n}\int_\Si\pd_n\Ga((x-y^*)-z,t)\pd_\beta E(z)\,dz'dz_n\\
=&-e^{-\frac{y_n^2}{4t}}\pd_n^2A(x'-y',x_n,t)-\frac12\,\pd_n\Ga(x-y^*,t)\\
&-e^{-\frac{y_n^2}{4t}}\sum_{i=\beta}^{n-1}\pd_{x_\beta}^2A(x'-y',x_n,t)-\sum_{\beta=1}^{n-1}\pd_{x_\beta}C_\beta(x,y,t).
\end{aligned}\]
Observe that $\De_x A(x'-y',x_n,t)=0$ since $\De E=0$ for $x_n>0$. Hence the first term cancels the third term. This proves \eqref{eq0108b}.
\end{proof}

\begin{proof}[The proof of \thref{Ci_estimate}]
Note that
\begin{equation}\label{C_invariant}C_i(x,y,t)=\frac1{t^{\frac{n}2}}\,C_i\left(\frac{x}{\sqrt{t}},\frac{y}{\sqrt{t}},1\right).\end{equation}
We will first estimate spatial derivatives assuming t = 1.

\medskip

\noindent{\bf $\bullet\,\boldsymbol{\pd_{x',y'}, \pd_{y_n}}$-estimate:}
Changing the variables $w=x-y^*-z$ after taking derivatives,
\[
\pd_{x',y'}^l\pd_{y_n}^qC_i(x,y,1) = \int _\Pi\pd_{w'}^l\pd_n^{q+1}\Ga(w,1)\,\pd_iE(x-y^*-w)\,dw
\]
up to a sign,
where $\Pi=\{w\in\R^n:y_n\le w_n\le x_n+y_n\}$. It is bounded for finite $|x-y^*|$, and to prove the estimate, we may assume $|x-y^*|>100$. Decompose $\Pi=\Pi_1+\Pi_2$ where
\[\Pi_1=\Pi\cap\left\{|w|<\tfrac34\,|x-y^*|\right\},\ \ \ \Pi_2=\Pi\setminus\Pi_1.\]
Integrating by parts in $\Pi_1$ with respect to $w'$ iteratively, we have
\begin{align*}
\pd_{x',y'}^l\pd_{y_n}^q  C_i(x,y,1)
&\sim \int_{\Pi_1}\left(\pd_{w_n}^{q+1}e^{-\frac{w^2}4}\right)\pd_{w'}^l\pd_iE(x-y^*-w)\,dw\\
&+\sum_{p=0}^{l-1}\int_{\Pi\cap\left\{|w|=\frac34\,|x-y^*|\right\}}\left(\pd_{w'}^{l-1-p}\pd_{w_n}^{q+1}e^{-\frac{w^2}4}\right)\pd_{w'}^p\pd_iE\cdot\chi_{p}(x-y^*-w)\,dS_w\\
&+\int_{\Pi_2}\bke{\pd_{w'}^l\pd_{w_n}^{q+1}e^{-\frac{w^2}4}}\,\pd_iE(x-y^*-w)\,dw =I_1+I_2+I_3,
\end{align*}
where $\chi_{p}$ are bounded functions on the boundary. Strictly speaking, $p$ in $\chi_{p}$ should a multi-index. %
For $I_1$, we have
\[
|I_1| \lec \int_{\Pi_1} e^{-\frac {w^2}{5}} \frac1{|x-y^*|^{l+n-1}}\,dw \lec \frac1{|x-y^*|^{l+n-1}}.
\]
If $y_n>0$, the exponential in the integrand $e^{-\frac {w^2}{5}} \le e^{-\frac {w^2}{10}} e^{-\frac {y_n^2}{10}}$. Hence
\[
|I_1|  \lec \frac{e^{-\frac {y_n^2}{10}}} {|x-y^*|^{l+n-1}}, \quad \text{if } y_n>0.
\]

For $I_3$,
since $|w|\ge\frac34|x-y^*|$ implies that $\frac1{10}|x-y^*|^2+\frac1{100}|x-y^*-w|^2 \le \frac15 |w|^2$,
\EQN{
|I_3| &\lesssim\int_{|w|>\frac34|x-y^*|}|w|^{l+q+1}\,e^{-\frac{w^2}4}\,\frac1{|x-y^*-w|^{n-1}}\,dw\\
&\lec e^{-\frac{|x-y^*|^2}{10}}\int_{|w|>\frac34|x-y^*|}e^{-\frac{|x-y^*-w|^2}{100}}\,\frac1{|x-y^*-w|^{n-1}}\,dw
\lec e^{-\frac{|x-y^*|^2}{10}}.
}

$I_2$ is controlled for large $|x-y^*|$ since
\begin{align*}
|I_2|\lesssim&~\sum_{p=0}^{l-1}\int_{|w|=\frac34|x-y^*|}|w|^{l+q-p}\,e^{-\frac{w^2}4}\,\frac1{|x-y^*-w|^{n+p-1}}\,dS_w\\
\lesssim&~e^{-\frac{9|x-y^*|^2}{64}}\sum_{p=0}^{l-1}|x-y^*|^{l+q-p}\frac1{|x-y^*|^{n+p-1}}\int_{|w|=\frac34|x-y^*|}dS_w\\
\sim&~e^{-\frac{9|x-y^*|^2}{64}}\sum_{p=0}^{l-1} |x-y^*|^{l+q-2p}\lesssim e^{-\frac{|x-y^*|^2}{10}}.
\end{align*}

Therefore, we conclude that
\EQ{
|\pd_{x',y'}^l\pd_{y_n}^q C_i(x,y,1)|\lesssim\frac{1}{\bka{x-y^*}^{l+n-1}}\,e^{-\frac{ ((y_n)_+)^2}{10}}.
}
The factor $e^{-\frac{ ((y_n)_+)^2}{10}}=1$ if $y_n \le 0$.

\medskip

\noindent{\bf $\bullet\,\boldsymbol{\pd_{x_n}}$-estimate:} We shall prove by induction with respect to $k$ that
\begin{equation}\label{eq_pdxnCi_induction}|\pd_{x',y'}^l\pd_{x_n}^k\pd_{y_n}^q  C_i(x,y,1)|\lesssim\frac1{\bka{x-y^*}^{l+n-1}\bka{x_n+y_n}^k}\,e^{-\frac{ ((y_n)_+)^2}{20}}.
\end{equation}
For $k=0$ this estimate has already been proved. Assume that the estimate has been proved for $k-1$ and arbitrary $l$. For $i<n$, \eqref{eq0108a} and the estimate \eqref{eq_estB2} give
\begin{align*}\left|\pd_{x',y'}^l\pd_{x_n}^k\pd_{y_n}^q C_i(x,y,1)\right|=&~\left|\pd_{x',y'}^l\pd_{x_n}^{k-1}\pd_{y_n}^q \left(\pd_i\pd_nB(x-y^*,1)+\pd_{x_i}C_n(x,y,1)\right)\right|\\
\le&~\left|\pd_{x',y'}^{l+1}\pd_n^{k+q} B(x-y^*,1)\right|+\left|\pd_{x',y'}^{l+1}\pd_{x_n}^{k-1}\pd_{y_n}^q C_n(x,y,1)\right|\\
\lesssim&~\frac1{\bka{x-y^*}^{l+n-1}}\,e^{-\frac{(x_n+y_n)^2}{10}} +\frac1{\bka{x-y^*}^{l+n}\bka{x_n+y_n}^{k-1}}
\,e^{-\frac{ ((y_n)_+)^2}{20}}
\\
\lesssim&~\frac1{\bka{x-y^*}^{l+n-1}\bka{x_n+y_n}^{k}}\,e^{-\frac{ ((y_n)_+)^2}{20}}.
\end{align*}
For $i=n$, \eqref{eq0108b} and the estimate
\begin{equation*}
|\pd_x^l\pd_t^m\Ga(x,t)|\lesssim\frac1{\left(x^2+t\right)^{\frac{l+n}2+m}}
\end{equation*}
imply that
\[\begin{aligned}\left|\pd_{x',y'}^l\pd_{x_n}^k\pd_{y_n}^qC_n(x,y,1)\right|=&~\left|\pd_{x',y'}^l\pd_{x_n}^{k-1}\pd_{y_n}^q\left(-\frac12\,\pd_n\Ga(x-y^*,1) -\sum_{\beta=1}^{n-1}\pd_{x_\beta}C_\beta(x,y,1)\right)\right|\\
\lesssim&~\left|\pd_x^{l+k+q}\Ga(x-y^*,1)\right| + \sum_{\beta=1}^{n-1}\left|\pd_{x',y'}^{l+1}\pd_{x_n}^{k-1}\pd_{y_n}^qC_\beta(x,y,1)\right|\\
\lesssim&~\frac1{\bka{x-y^*}^{l+n-1}}\,e^{-\frac{ (x_n+y_n)^2}{10}}+\frac1{\bka{x-y^*}^{l+n}\bka{x_n+y_n}^{k-1}}\,e^{-\frac{ ((y_n)_+)^2}{20}}\\
\lesssim&~\frac1{\bka{x-y^*}^{l+n-1}\bka{x_n+y_n}^{k}}\,e^{-\frac{ ((y_n)_+)^2}{20}}.\end{aligned}\]
So \eqref{eq_pdxnCi_induction} holds true.
\medskip

Finally, \eqref{eq_Ci_estimate_yn} can be obtained by differentiating \eqref{C_invariant} in $t$, using \eqref{eq_pdxnCi_induction}, and applying induction.
This completes the proof.
\end{proof}

\section*{Acknowledgments}
The research of KK was partially supported by NRF-2019R1A2C1084685.
The research of BL was partially supported by NSFC-11971148.
The research of both CL and TT was partially supported by the NSERC grant RGPIN-2018-04137.

\def\cprime{$'$}

\end{document}